\ifdef{\warningsaserrors}{%
  }{}
\theoremstyle{plain}
\newtheorem{thm}{Theorem}
\newtheorem*{conj}{Conjecture}
\newtheorem{cor}[thm]{Corollary}
\newtheorem{lem}[thm]{Lemma}
\newtheorem{prop}[thm]{Proposition}
\theoremstyle{remark}
\newtheorem{rmk}{Remark}
\newcounter{QuestionCounter}
\newcommand{\R}{\mathbb{R}}
\newcommand{\C}{\mathbb{C}}
\newcommand{\N}{\mathbb{N}}
\newcommand{\Z}{\mathbb{Z}}
\renewcommand{\S}{\mathbb{S}}
\newcommand{\SK}{\mathcal{K}}
\newcommand{\SQ}{\mathcal{Q}}
\newcommand{\SI}{\mathcal{I}}
\newcommand{\SO}{\mathcal{O}}
\newcommand{\SR}{\mathcal{R}}
\newcommand{\SF}{\mathcal{F}}
\newcommand{\bigO}{\mathcal{O}}
\newcommand{\rgamma}{{\hat\gamma}}
\newcommand{\rnu}{\hat\nu}
\newcommand{\rtau}{\hat\tau}
\newcommand{\rs}{\hat s}
\newcommand{\rk}{\hat k}
\newcommand{\ork}{\overline{\hat k}}
\newcommand{\rG}{\hat G}
\newcommand{\rL}{\hat L}
\newcommand{\rA}{\hat A}
\newcommand{\rSQ}{\hat\SQ}
\newcommand{\rSI}{\hat\SI}
\newcommand{\ol}[1]{\overline{#1}}
\newcommand{\p}{\partial}
\renewcommand{\t}[1]{\widetilde{#1}}
\newcommand{\dd}[1]{\;d#1}
\renewcommand{\k}{\overline{k}}
\newcommand{\kv}{\overline{k}}
\newcommand{\IP}[2]{\langle #1, #2\rangle}
\newcommand{\vn}[1]{\lVert #1 \rVert}
\begin{document}

\title{Theory and Numerics for Chen's flow of curves}
\author{M.K.\ Cooper, G.\ Wheeler and V.-M.\ Wheeler}
\date{\today}

\begin{abstract}
In this article we study Chen's flow of curves from theoreical and numerical
perspectives.
We investigate two settings: that of closed immersed $\omega$-circles, and
immersed lines satisfying a cocompactness condition.
In each of the settings our goal is to find geometric conditions that allow us
to understand the global behaviour of the flow:
for the cocompact case, the condition is straightforward and the argument is
largely standard.
For the closed case however, the argument is quite complex. The flow shrinks
every initial curve to a point if it does not become singular beforehand, and
we must identify a condition to ensure this behaviour as well as
identify the point in order to
perform the requisite rescaling.
We are able to successfully conduct a full analysis of the rescaling under a
curvature condition. The analysis resembles the case of the mean curvature flow
more than other fourth-order curvature flow such as the elastic flow or the
curve diffusion flow, despite the lack of maximum and comparison principles.
Our work is informed by a numerical study of the flow, and we include a section
that explains the algorithms used and gives some further simulations.
\end{abstract}

\maketitle

\section{Introduction}

In \cite{BWWChen}, Chen's flow
\begin{equation}
\label{CF}
\tag{CF}
(\partial_tf)(p,t) = -(\Delta^2 f)(p,t)\,,\qquad (p,t) \in M^n\times(0,T)
\,,
\end{equation}
was proposed and studied.
Here $f:M^n\times[0,T)\rightarrow\R^{N}$ (with $f(p,0) = f_0(p)$ for a given
smooth isometric immersion $f_0:M^n\rightarrow\R^{N}$) is a one-parameter family of smooth isometric immersions,
and $\Delta^2$ is Chen's biharmonic operator, also known as the iterated rough Laplacian.
Chen's conjecture is that $\Delta^2 f \equiv 0$ implies $\Delta f \equiv 0$, a
desirable fact that arises in the study of the spectral decomposition of
immersions in submanifold theory.
There has been much activity on the conjecture (see as a sample the recent
papers \cite{b08,d06,luo14,m14,mon06,n14,ou10,ou16,ou12,w14,whe13}
and Chen's recent survey \cite{chen13}), but still it remains open.

Local well-posedness applies from \cite{BWWChen} to all dimensions, and so the
key issue to investigate is asymptotic behaviour of the flow.
The results of \cite{BWWChen} on this are primarily for two and four
dimensional evolving submanifolds.
The convergence result \cite[Theorem 4]{BWWChen} is for surfaces only and
relies on an analysis of the $L^2$-norm of the tracefree second fundamental
form that involves studying a sequence of parabolic rescalings 
about a singularity.
Only in two dimensions is the $L^2$-norm of the tracefree second fundamental
form scale-invariant, and so this approach can not work in general.

\begin{figure}[t]
\begin{center}
\includegraphics[scale=0.6]{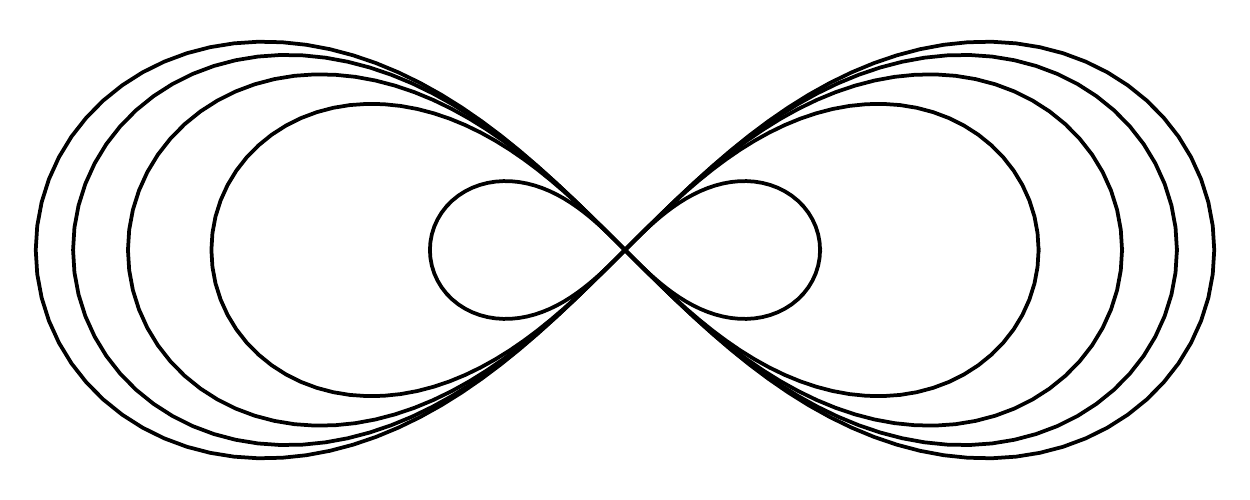}
\caption{Numerical solution of the shrinking Lemniscate of Bernoulli whose initial data satisfy $(x^2 + y^2)^2 = x^2 - y^2$.}\label{fig:ShrinkingLemniscate}
\end{center}
\end{figure}

\begin{figure}[t]
\begin{center}
\includegraphics[scale=1.3]{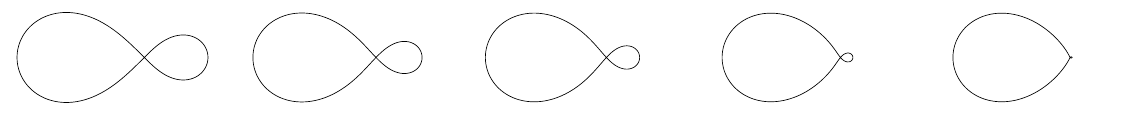}
\caption{A non-symmetric (unbalanced) Lemniscate blowing up.}\label{fig:NonSymLem}
\end{center}
\end{figure}
\begin{figure}[t]
\begin{center}
\includegraphics[scale=2]{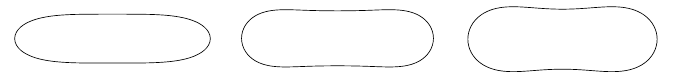}
\caption{
Evolution of the Lam\'e curve $\frac{x^4}{256} + y^2 = 1$ under Chen's flow.
The initial data is strictly convex, but the solutions becomes non-convex under the flow.
}\label{fig:Lame}
\end{center}
\end{figure}

\begin{figure}[t]
\begin{center}
\includegraphics[scale=2.3]{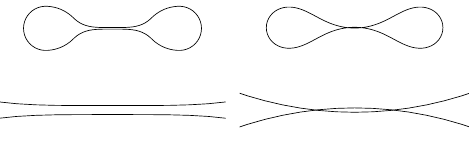}
\caption{
Evolution of a dumbbell with a narrow neck.
This shows that loss of embeddedness may occur under Chen's flow.
The bottom row is a closeup of the neck.
The top component of the neck is the graph of $x \mapsto x^4/4 + c/2$, where $c > 0$, and the bottom component is the reflection of this through the $x$-axis.
The width of the neck in the middle is $c$, which in this example is $0.04$.
The main observation that makes this example work is that the graph of $x \mapsto x^4/4 + c/2$ initially moves down in the middle, and hence if $c$ is sufficiently small it will cross the $x$-axis and intersect its reflection.
We connect these neck portions in a somewhat arbitrary way, since this is really a local phenomenon.
}\label{fig:ThinDumbbell}
\end{center}
\end{figure}

\begin{figure}[t]
\begin{center}
\includegraphics[scale=1.3]{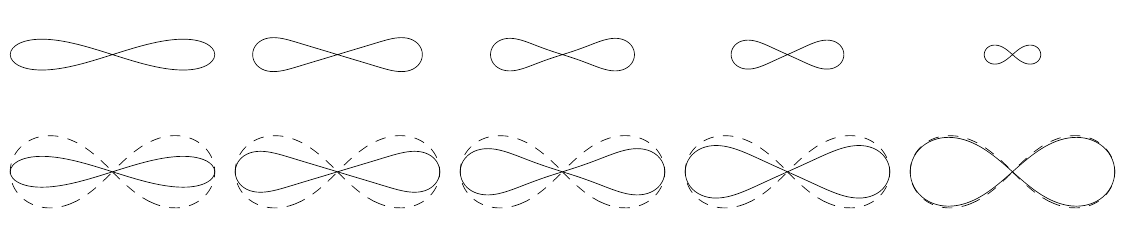}
\caption{
In the top row we have the Chen's flow evolution from a Lemniscate of Gerono $(x^2 + y^2)^2 = x^2 - 10y^2$.
The bottom row is the rescaling of the corresponding top row curve so that its horizontal extent is two, and the dashed curve is the Lemniscate of Bernoulli $(x^2 + y^2)^2 = x^2 - y^2$.
These rescalings appear to converge to the Lemniscate of Bernoulli. This indicates that
the Lemniscate of Bernoulli may be an attractor, or at least stable, for symmetric figure-eight curves under Chen's flow.
}\label{fig:Gerono}
\end{center}
\end{figure}

\begin{figure}[t]
\begin{center}
\includegraphics[scale=2]{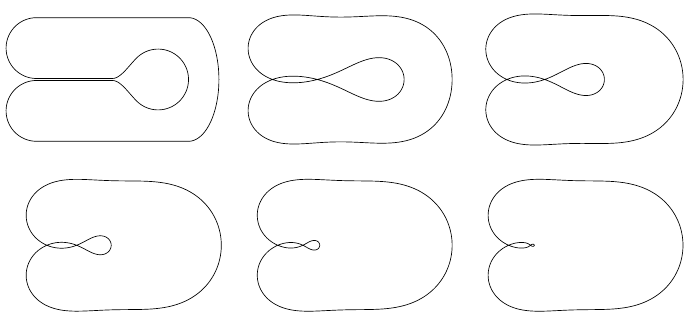}
\caption{
Chen's flow evolution of Mayer's curve.
Observe that this provides another example of the potential loss of embeddedness under Chen's flow.
}\label{fig:Mayer}
\end{center}
\end{figure}

\begin{figure}[t]
\begin{center}
\includegraphics[scale=1.25]{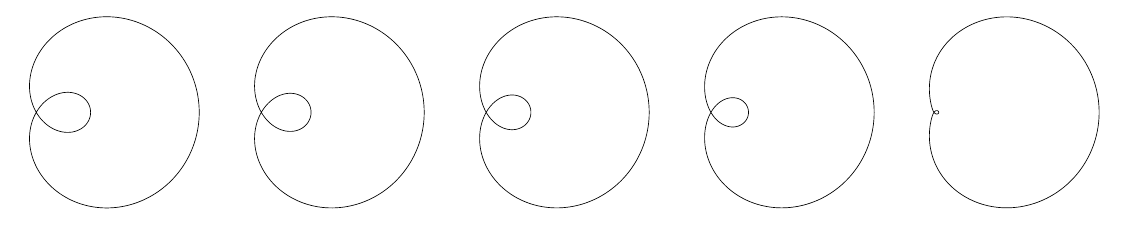}
\caption{
Chen's flow evolution of a Lima\c{c}on.
Observe that the flow quickly cuts off the smaller loop.
}\label{fig:loops}
\end{center}
\end{figure}

\begin{figure}[t]
\begin{center}
\includegraphics[scale=0.75]{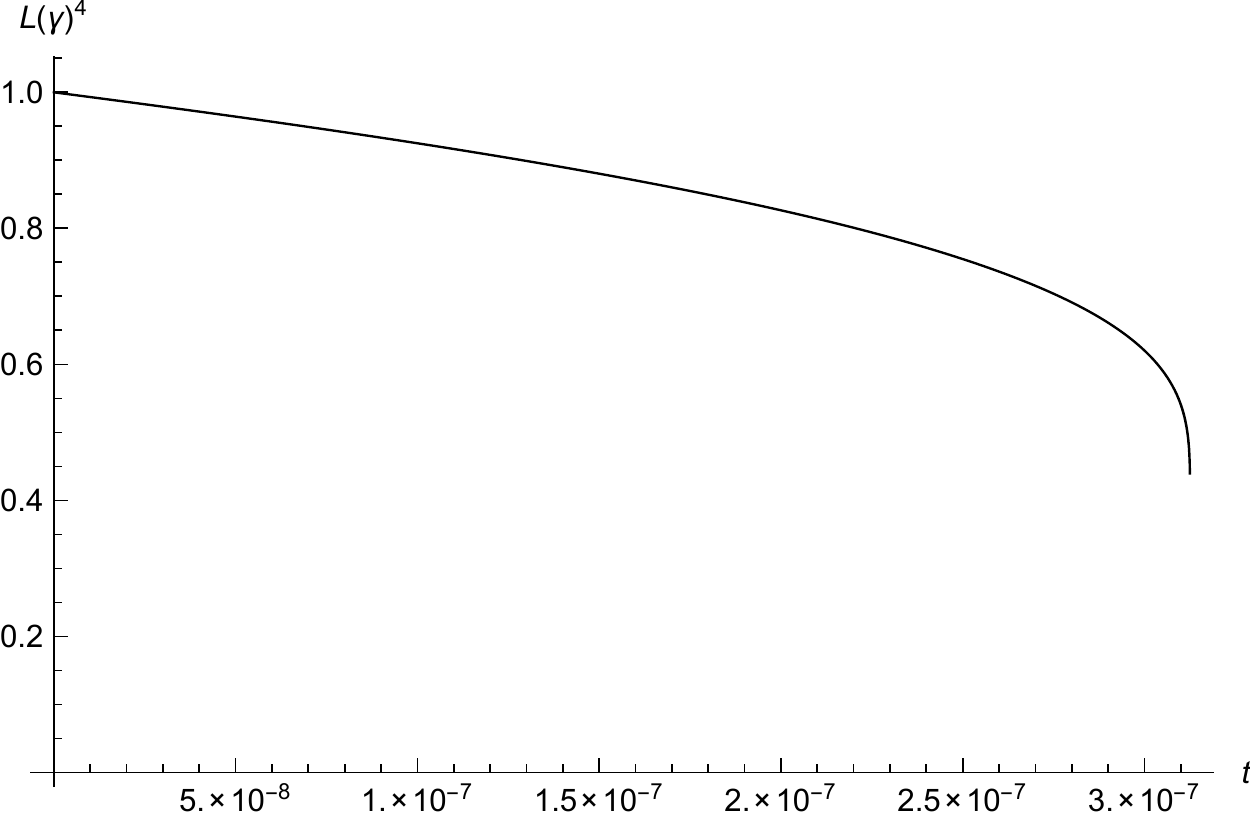}
\caption{
This displays is the evolution of $L[\gamma]^4$ over the lifetime of the Lima\c{c}on from Figure~\ref{fig:loops}.
Notice that it is not convex.
}\label{fig:LoopsLength}
\end{center}
\end{figure}

\begin{figure}[t]
\begin{center}
\includegraphics[scale=1]{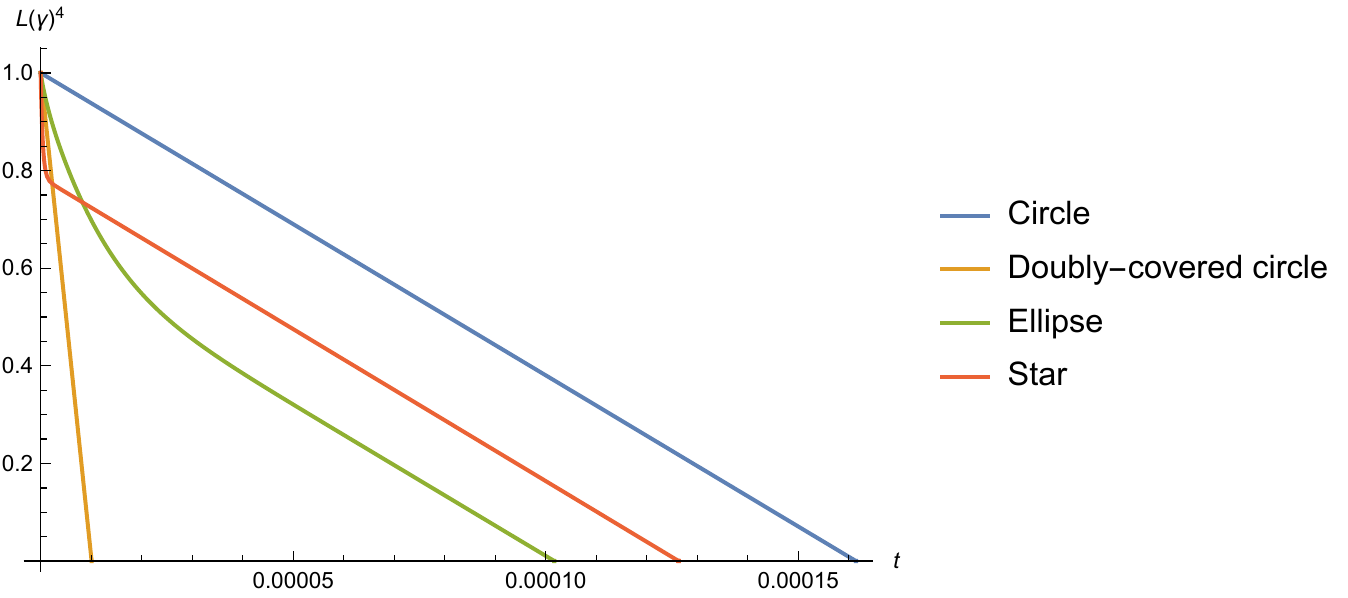}
\caption{
  This displays the evolution of $L[\gamma]^4$ over the lifetime of various curves evolving under Chen's flow.
  Note that all of these curves shrink to a round circle under Chen's flow.
  The initial data for the first two evolutions are the circle and doubly-covered circle, respectively, each scaled so that their initial length is one.
  The initial data for the third evolution is an ellipse of length one such that the ratio of the semi-major and semi-minor axes is two.
  The initial data for the fourth evolution is equivalent to, up to scaling, the star-shaped curve given in polar coordinates by $r = 0.1\cos(5\theta) + 1$.
  This star-shaped curve is scaled so that its length is one.
Note that the graph is convex in each case, and that the lifespan of the flow is bounded from above by that of the circle.
}\label{fig:CirclePertLength4}
\end{center}
\end{figure}

This article is concerned with the one-dimensional case of Chen's flow:
\begin{equation}
\label{ChenFlow}
	\partial_t\gamma = -\gamma_{s^4} = -(k_{ss} - k^3)\nu + 3kk_s\tau
	\,.
\end{equation}
Here $\gamma:I\times[0,T)\rightarrow\R^2$ is a one-parameter family of immersed
curves, where $I$ is either $\S$ (and then $\gamma$ is closed) or $I = \R$ and
$\gamma$ is an entire curve. We have used $k$ for the signed scalar curvature,
the subscript $s$ to denote differentiation with respect to arc-length $s$, and
$\{\nu,\tau\}$ are the normal and tangent vectors along $\gamma$.
Note that the tangential term could only possibly impact the shape of the
evolving family $\gamma$ in the non-closed case.
In the closed, embedded case our convention is that the unit normal points
toward the interior of $\gamma$ and that $\gamma_{ss} = k\nu$.
The vector $\partial_t\gamma$ is pointwise orientation independent, so in the
case where $\gamma$ is not embedded or not closed, we choose an arbitrary
orientation (but retain our convention that $\gamma_{ss} = k\nu$).

Chen's flow, being a fourth-order curvature flow, behaves differently to typical second-order flows.
For example, compared to the curve-shortening flow, we generically have loss of strict
convexity (see Figure~\ref{fig:Lame}) and loss of embeddedness (see
Figure~\ref{fig:ThinDumbbell} and Figure~\ref{fig:Mayer}) phenomena (see
\cite{Blatt} for a general description of these phenomena).
However there are some deep analogies between Chen's flow (which is the
biharmonic heat equation for the immersion) and the curve-shortening flow
(which is the heat equation for the immersion), as we shall explain.

Although the one-dimensional context is considerably simpler than the
higher-dimensional cases considered in \cite{BWWChen}, a new idea is required
to carry out the analysis.
In one dimension the tracefree second fundamental form is not defined.
The natural analogue is the (normalised) oscillation of curvature, either in
$L^1$ or in $L^2$:
\[
	\SK_{1,osc} = \int_\gamma |k-\k|\,ds\qquad
	\SK_{2,osc} = L[\gamma]\int_\gamma |k-\k|^2\,ds
	\,.
\]
Here $k$ is the signed scalar curvature of a planar immersed curve
$\gamma:I\rightarrow\R^2$, $ds$ is its arc-length element, $L$ denotes the
total arclength, and $\k$ is the average of the curvature.

The oscillation of curvature has been used
recently for fourth-order flows \cite{PW16,PW19,W13,WW17} that are similar in form to
Chen's flow studied here.
However, as with curve-shortening flow, the oscillation of curvature does not behave well under Chen's flow of
curves.
The isoperimetric ratio, another quantity used both
classically \cite{A98,EG87,H98,T98,G90} and for fourth-order flows \cite{C03,PW16,W13}, also does
not behave well under Chen's flow (unless we are in a very special situation,
see Proposition \ref{PNdecayrsi}).
The isoperimetric defect on the other hand is monotone under the curve-shortening flow \cite{G83}.
Under some hypotheses, the defect is also monotone under Chen's flow.
The defect is however not scale-invariant and so gives quite weak information on the blowup.
It is not yet clear to us how we
might use this monotonicity for our analysis.
This behavior is reminiscent of the curve shortening flow \cite{G83}.

Chen's flow is not a gradient flow for length in the $L^2$-sense (as
the curve shortening and mean curvature flows are), or in the $H^{-1}$-sense
(as the curve diffusion and surface diffusion flows are).
However the flow does decrease length at a dramatic rate, satisfying
\[
	L'(t) = - \int_\gamma k_s^2 + k^4 \,ds
	\,.
\]
This is a higher-order analogue of the curve shortening flow, which satisfies $L' = -\int_\gamma k^2\,ds$.
Continuing with this similarity, it is shown in \cite[Theorem 2]{BWWChen} that (here we state the one-dimensional case only)
\[
	L(t) \le \sqrt[4]{L_0^4 - 64\omega^4\pi^4 t}
	\,.
\]
Above we used $\omega = \frac1{2\pi}\int_\gamma k\,ds$ to denote the winding
number of $\gamma$ and we have assumed that $\gamma$ is closed.
Among other conclusions, this shows that the flow exists at most for finite time, with the estimate
\begin{equation}
\label{FT}
	T \le \frac{L_0^4}{64\omega^4\pi^4} = \frac14\Big(\frac{L_0}{2\omega\pi}\Big)^4\qquad\text{for $\omega\ne0$, and}\qquad
	T \le \frac{L_0^4}{64\pi^4}\qquad\text{for $\omega=0$ (and $\gamma$ closed)}
\,.
\end{equation}
This estimate is sharp for $\omega\ne0$, as we shall now demonstrate.

Consider the flow of a round $\omega$-circle with initial radius $r_0>0$, $\rho:\S\times[0,T)\rightarrow\R^2$.
By explicit calculation, we find that the radius function $r:[0,T)\rightarrow\R$ satisfies
\[
	(r^4(t))' = -4
\]
which implies
\[
	T = \frac{r_0^4}{4}
	\,.
\]
Since for this flow we have $L_0 = 2\omega\pi r_0$, or $r_0 =
\frac{L_0}{2\omega\pi}$, this achieves equality in the estimate \eqref{FT},
showing that \eqref{FT} is sharp, and furthermore is sharp in every regular homotopy class of an $\omega$-circle.
The estimate \eqref{FT} holds however also in the case where $\gamma$ is
topologically a lemniscate (note that if $\gamma$ is not closed and $\omega=0$
we may have global existence, to for instance a straight line), and there the
sharpness is not clear.

We do however have a natural candidate for the optimal bound when $\gamma$ is
topologically a lemniscate.
The \emph{Lemniscate of Bernoulli} $\beta:\S\rightarrow\R^2$ (by which we mean
the parametrisation of the solution set to $(x^2+y^2)^2 = x^2 - y^2$ where
$(x,y)\in\R^2$) satisfies the relation
\begin{equation}
\label{LB1}
	\IP{\beta}{\nu} = \frac16 k_{ss}
	\,.
\end{equation}
This was first described in \cite{EGMWW15}, where this condition is
enough to guarantee that the Lemniscate shrinks homothetically under curve
diffusion flow.
Rather surprisingly, Bernoulli's Lemniscate also satisfies the relation
\begin{equation}
\label{LB2}
	\IP{\beta}{\nu} = -\frac{1}{27} k^3
	\,.
\end{equation}
These relations allow us to show that the Lemniscate of Bernoulli is a homothetically shrinking solution to Chen's flow.
Setting $h(t) = (1-132t)^\frac14$, we find that $\gamma(\theta, t) = h(t)\beta(\theta)$
satisfies $\gamma(\theta,0) = \beta(\theta)$
 and 
\begin{align*}
	\IP{\gamma_t - (-\gamma_{s^4})}{\nu}
	&= \IP{\gamma_t + (k_{ss} - k^3)\nu}{\nu}
	= h'(t)\IP{\beta}{\nu} + h^{-3}(t) (6\IP{\beta}{\nu} + 27\IP{\beta}{\nu})
	\\
	&= \frac{h^{-3}(t)}{4}\IP{\beta}{\nu}\Big[ -132 + (24 + 108)\Big] = 0
\end{align*}
making $(\theta,t)\mapsto h(t) \beta(\theta) = \gamma(\theta,t)$ a homothetic
solution to Chen's flow.\footnote{For details of this calculation we refer the
interested reader to Appendix B. We also give a conjecture related to the
Lemniscate of Bernoulli and Chen's flow there.}
We have numerically simulated this solution in Figure \ref{fig:ShrinkingLemniscate}.
We are not aware of any deep, geometrical reason why the Lemniscate of Bernoulli satisfies the relations \eqref{LB1}, \eqref{LB2}.
Other lemniscates, for instance the Lemniscate of Gerono, does not satisfy either of these relations.
Numerically, the evolution of Gerono's Lemniscate is asymptotic to that of Bernoulli's -- see Figure \ref{fig:Gerono}.

We therefore find examples of homothetic solutions for any winding number
$\omega$. All are circular except for when the winding number is zero,
where it appears that the canonical example is the Lemniscate of Bernoulli.
These are the mildest form of finite-time singularity, where the flow exists until the length has disappeared.
There are many other kinds of finite-time singularities, where the flow cuts
off loops and cusps (see Figure \ref{fig:loops}), or such cusps form where there were none previously, see
for instance the evolution of Mayer's curve (see Figure \ref{fig:Mayer} for our
simulation and \cite{Mayer2001} for the original article, there used for the
area preserving curve shortening flow).

The Lemniscate does not appear stable without additional considerations, for
instance, symmetry requriements (see Figure \ref{fig:NonSymLem}).
Numerically multiply-covered circles appear to be stable under small, symmetric perturbation for Chen's flow.
Our aim is not to prove a perturbative result however, but instead to find a
curvature condition that guarantees the flow is asymptotic to a shrinking
$\omega$-circle, in analogy with the surface case in \cite{BWWChen}.
Curvature conditions are quite rough measurements; such an aim is doomed in the
case of the Lemniscate, because a balancing symmetry condition can not be
detected by norm restrictions on the curvature (see Figure \ref{fig:NonSymLem} for the evolution of an unbalanced Lemniscate of Bernoulli).
Similarly, a cardioid (which can be the result of an arbitrarily small but not
symmetric deformatiuon of an $\omega$-circle) will flow quickly to cut off the
smaller loop (Figure \ref{fig:loops}).
Even in the class of embedded initial data, the development of a finite-time non-circular curvature singularity can occur.
This can be seen by studying Chen's flow of Mayer's aforementioned example (see Figure \ref{fig:Mayer}).
This suggests that curvature conditions that involve more curvature than $(\omega+1)$ round circles are likely false.

The key idea to control the evolving geometry of the flow is the following.
Chen's flow decreases length at a dramatic rate.
However length is certainly not scale-invariant.
In order to use this fact, we investigate the acceleration of the fourth power
of length, and show that this quantity (under an initial condition) has a
guaranteed sign.
In effect, this amounts to a fundamental observation about Chen's flow: that
the fourth power of length is, under some conditions, a convex function of
time.
Then, since the velocity of the fourth power of length also happens to be
scale-invariant, we may use this observation to obtain key estimates for a
blowup of the solution around the finite-time singularity.
This eventually results in the following theorem, which is the main
result of the paper.

Let $\gamma:I\times[0,T)\rightarrow\R^2$ be a smooth one-parameter family of immersed curves.
For the statement we make the definition
\[
	\SQ:[0,T)\rightarrow\R\,,\qquad \SQ(t) := L^3\int_\gamma k_s^2 + k^4\,ds\,.
\]

\begin{thm}
\label{TMmain}
Suppose $\gamma:I\times[0,T)\rightarrow\R^2$ is a compact Chen flow with
initial data $\gamma_0:I\rightarrow\R^2$ being an immersed $\omega$-circle
satisfying
\begin{equation*}
	\SQ(0) = L^3\int_\gamma k_s^2 + k^4\,ds\bigg|_{t=0} < 16\omega^4\pi^4 + \varepsilon_1
\end{equation*}
where $\varepsilon_1$ is the universal constant depending only on $\omega$ from
\eqref{EQseteps1}.
Then
$L(t) \searrow 0$ as $t\nearrow T$, and $\gamma(I,t)\rightarrow\{\SO\}$, where
$\SO\in\R^2$, in the Hausdorff metric on subsets of $\R^2$.

Convergence of the flow $\gamma$ to the point $\SO$ is asymptotically smooth
and the point is round in the following sense.
Let $\rgamma:I\times[0,\infty)\rightarrow\R^2$ be the rescaling of $\gamma$
around the final point $\SO$ given by
\[
\hat\gamma(u, t) = (4T)^{-\frac14}e^{t}\Big(\gamma(u,T - Te^{-4t}) - \SO\Big)
\,.
\]
The rescaled flow $\rgamma(I,t)$ converges to the unit $\omega$-circle centred at the
origin, with curvature $\hat k$ converging exponentially fast toward its average and
all derivatives of curvature covnerging exponentially fast to zero.
\end{thm}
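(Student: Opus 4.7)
The plan rests on the identity $\SQ(t) = -\tfrac14(L^4)'(t)$, which reduces the dynamics to the convexity and decay of $L^4$. A double application of Cauchy--Schwarz to $2\pi\omega = \int_\gamma k\,ds$ shows
\[
	\SQ(t) \ge L^3\int_\gamma k^4\,ds \ge L^2\Big(\int_\gamma k^2\,ds\Big)^2 \ge L^2\cdot\frac{(2\pi\omega)^4}{L^2} = 16\omega^4\pi^4,
\]
with equality if and only if $\gamma$ is a round $\omega$-circle, so the hypothesis $\SQ(0) < 16\omega^4\pi^4+\eps_1$ says exactly that $\gamma_0$ is nearly round. The strategy is: (i) show $\SQ$ is non-increasing along the flow, preserving the smallness and making $L^4$ convex in $t$; (ii) use the smallness to produce scale-invariant curvature bounds that yield Hausdorff collapse to a point $\SO$; (iii) transport this through the rescaling and extract exponential convergence to the round $\omega$-circle.

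For (i), writing $\psi := \int_\gamma(k_s^2+k^4)\,ds = -L'$, one has $\SQ'(t) = -3L^2\psi^2 + L^3\psi'$, and $\psi'$ is obtained from \eqref{ChenFlow} via $\p_t k = -k_{s^4} + \text{cubic}$ together with the standard $[\p_t,\p_s]$ commutator. Repeated integration by parts reveals that the leading-order contribution to $\psi'$ is a manifestly non-positive square (roughly $-2\int_\gamma(k_{sss}-3k^2k_s)^2\,ds$), while the remaining terms are lower-order in derivative count and can be absorbed by Gagliardo--Nirenberg interpolation on closed curves once $\SQ - 16\omega^4\pi^4$ is small enough. The choice of $\eps_1$ is precisely what is needed to close this absorption; the conclusion is $\SQ' \le 0$, and hence $L^4$ is convex in $t$.

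For (ii), $(L^4)' \le -64\omega^4\pi^4$ together with $L(T)=0$ forces $L(t)\searrow 0$ at a definite rate. Bootstrapping the smallness of $\SQ$ against Sobolev embedding, differentiating \eqref{ChenFlow}, and using the evolution equations for $\int(\p_s^j k)^2\,ds$ in the spirit of \cite{BWWChen}, one obtains scale-invariant pointwise bounds $\|\p_s^j k\|_\infty \le C_j L^{-(j+1)}$. The normal velocity estimate $|\p_t\gamma|\le |k_{ss}-k^3|$ is then integrable in $t$ over $[0,T)$, so every point on $\gamma(\cdot,t)$ converges to a common limit $\SO\in\R^2$; since the diameter is bounded by $L$, the Hausdorff collapse $\gamma(I,t)\to\{\SO\}$ is immediate.

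For (iii), $\rgamma$ is the self-similar blow-up adapted to the fourth-order scaling, and since both $L^3\int k_s^2\,ds$ and $L^3\int k^4\,ds$ are scale-invariant one has $\rSQ(t) = \SQ(T-Te^{-4t}) \in [16\omega^4\pi^4,16\omega^4\pi^4+\eps_1]$, non-increasing, and therefore convergent to some $\SQ_\infty$. The scale-invariant bounds from (ii) give $C^\infty$-subsequential convergence of $\rgamma$; any limit is a smooth $\omega$-circle with $\SQ = \SQ_\infty$, and the equality case of the Cauchy--Schwarz chain forces it to be round, of unit size by the choice of rescaling factor. The main obstacle is upgrading this to full convergence with an exponential rate, which I would attack by establishing a \L ojasiewicz--Simon type inequality for the underlying energy, or equivalently spectrally analysing the linearisation $\p_t\rgamma = -\rgamma_{s^4}+\rgamma$ around the unit $\omega$-circle. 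The neutral modes of this linearisation are only translations (killed by the centering at $\SO$) and rotations (corresponding to reparametrisation and invisible for image-level convergence), leaving a positive spectral gap from which one reads off exponential decay of $\rk-\ork$ and of all its arclength derivatives.
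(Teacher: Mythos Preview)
Parts (i) and (ii) are broadly on track and match the paper's strategy; Proposition \ref{PRl4convex} carries out exactly the absorption you describe. Make sure your version of (i) yields not merely $\SQ'\le 0$ but the quantitative dissipation $\int_0^T L^3\vn{k_{s^3}}_2^2\,dt<\infty$ (estimate \eqref{EQsql1}), because this is used repeatedly downstream. For (ii) the paper argues slightly differently (if $L\ge C>0$ then the preserved bound $L^3\vn{k_s}_2^2\le\eps_1$ together with Lemma \ref{LMkinl2} keeps $\vn{k}_2^2$ bounded, contradicting the blowup rate of Theorem \ref{TMblowup}), but your scale-invariant-bounds-plus-extension version is equivalent in spirit.

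Part (iii) has genuine gaps. First, identifying the subsequential limit as round via ``the equality case of the Cauchy--Schwarz chain'' presupposes $\SQ_\infty = 16\omega^4\pi^4$, which you have not shown; all you know is $\rSQ\searrow\SQ_\infty\ge 16\omega^4\pi^4$. The paper closes this using the dissipation from \eqref{EQsql1}: since $\int_0^\infty\rL^3\vn{\rk_{\rs^3}}_2^2\,dt<\infty$ and $\rL$ is bounded below (Lemma \ref{LMrlb}), one extracts $t_j$ with $\vn{\rk_{\rs}}_2(t_j)\to 0$, forcing any limit along $\{t_j\}$ to have constant curvature (Lemma \ref{LMkpos}). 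Second, your \L ojasiewicz--Simon proposal is problematic because Chen's flow is not a gradient flow (the paper notes this explicitly), so there is no energy functional to which the inequality attaches; the spectral alternative is viable in principle but entirely undeveloped here. The paper takes a different and more concrete route: it shows the isoperimetric defect $\rSI = 1 - 4\omega\pi\rA/\rL^2$ satisfies $\rSI'\le -\eps_3\rSI$ once $\vn{\rk_{\rs}}_2$ is small (Proposition \ref{PNdecayrsi}), and then interpolates against the uniform higher-derivative bounds of Theorem \ref{TMrallest} to obtain exponential decay of $\vn{\rk-\ork}_2$ and all $\vn{\rk_{\rs^m}}_2$ (Corollary \ref{CYrcurvdecay}). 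Third, you have omitted the position-vector bound $|\rgamma|\le C$ (Proposition \ref{PNrgbdd}); without it you cannot extract subsequential convergence of $\rgamma$ itself (only of its curvature), nor conclude that the limiting circle is centred at the origin. The paper obtains this by integrating $|\gamma_t|$ from $T-Te^{-4t}$ to $T$ against the length-rate estimates, which works precisely because the rescaling is centred at the genuine final point $\SO$.
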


We also consider the cocompact case.
This is simpler than the compact case, since Chen's flow tends to straighten
(which is a smoothing effect) initial data that is already sufficiently flat,
instead of driving every initial curve to a finite-time singularity.
It is possible to obtain a curvature condition in $L^2$ that guarantees convergence to a flat line.
There is no rescaling required.

For the statement we make the definition
\[
	\SR:[0,T)\rightarrow\R\,,\qquad \SR(t) := L\int_\gamma k^2\,ds\,.
\]
\begin{thm}
\label{TMcoco2}
Let $\gamma:I\times[0,T)\rightarrow\R^2$ be a cocompact Chen flow with initial
data $\gamma_0:I\rightarrow\R^2$ being an immersed interval satisfying $\omega = 0$ and
\begin{equation*}
\SR(0) = L\int_{\gamma} k^2\,ds\bigg|_{t=0} \le \frac34\,.
\end{equation*}

There exists a straight line $\gamma_\infty:\R\times[0,T)\rightarrow\R^2$
such that $\gamma\rightarrow \gamma_\infty$ exponentially fast in the $C^\infty$ topology.
\end{thm}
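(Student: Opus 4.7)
The plan is to show that the scale-invariant quantity $\SR(t) = L\int_\gamma k^2\,ds$ satisfies $\SR'(t)\le 0$ as long as $\SR(t)\le 3/4$, so that a continuity argument forces $\SR$ to be non-increasing on all of $[0,T)$ and in fact to decay exponentially. This decay then propagates to all higher curvature norms via a standard bootstrap and yields the $C^\infty$ convergence to a line.

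First I would compute the evolution of $\int k^2\,ds$. Since $\SR$ is parametrization-invariant, only the normal component $F = k^3 - k_{ss}$ of the Chen velocity matters; from $\partial_t k = F_{ss} + k^2 F$ and $\partial_t(ds) = -kF\,ds$, together with two integrations by parts (boundary terms vanishing by cocompactness of a fundamental period), I obtain
\[
\frac{d}{dt}\int k^2\,ds = -2\int k_{ss}^2\,ds - 3\int k^2 k_s^2\,ds + \int k^6\,ds.
\]
Combining with $L' = -\int(k_s^2+k^4)\,ds$ yields a formula for $\SR'(t)$ whose only positive contribution is $L\int k^6\,ds$. The heart of the argument is to dominate this positive term by the sum of the negatives using scale-invariant interpolation. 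The cocompact condition with $\omega=0$ forces $\int k\,ds = 0$ over one period, which delivers the Wirtinger inequality $\int k^2 \le \frac{L^2}{4\pi^2}\int k_s^2$ and the zero-mean Sobolev bound $\|k\|_\infty^2 \le 2\|k\|_2\|k_s\|_2$. These produce an estimate of the form $L\int k^6 \le C\,\SR^2\int k_s^2$; combined with the negative contributions $-\int k_s^2\cdot\int k^2$, $-2L\int k_{ss}^2$, and $-3L\int k^2 k_s^2$, one derives the key differential inequality
\[
\SR'(t) \le -\lambda\,\SR(t)\cdot\frac{\int k_s^2\,ds}{L} \qquad\text{whenever } \SR(t) \le \tfrac{3}{4},
\]
for some universal $\lambda > 0$. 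By continuity and the hypothesis $\SR(0)\le 3/4$, this bound propagates to all $t\in[0,T)$; using Wirtinger to bound $\int k_s^2$ below by $4\pi^2\SR/L^3$ then gives exponential decay of $\SR$.

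Since $L$ is bounded below by the norm of the generator of the cocompact translation (which is preserved by the flow) and non-increasing by $L'\le 0$, the decay of $\SR$ transfers to $\int k^2\,ds$ itself. A standard parabolic bootstrap using evolution equations for $\int (\partial_s^m k)^2\,ds$ and interpolation (along the lines of \cite{PW16,W13} and related techniques cited in the introduction) upgrades this to exponential decay of every $\|\partial_s^m k\|_{L^2}$, hence every $C^m$ norm of $k$ by Sobolev embedding. Because $\omega=0$ the tangent angle is periodic over a fundamental domain, so uniform decay of $k$ forces the tangent to converge to a constant unit vector; the bound $|\partial_t\gamma|\le |k_{ss}|+|k|^3 + 3|k||k_s|$ is then exponentially small in $t$, and integrating in time produces a straight line $\gamma_\infty$ to which $\gamma$ converges exponentially in $C^\infty$. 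The main obstacle is the sharp differential inequality for $\SR$ with threshold $3/4$: a naive estimate of $L\int k^6$ yields a strictly smaller threshold, and recovering the full $3/4$ requires simultaneous sharp use of all three negative terms together with optimal Gagliardo--Nirenberg constants; once this is in hand, the rest follows standard fourth-order flow techniques.
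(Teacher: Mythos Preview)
Your overall architecture --- show $\SR$ is nonincreasing under the threshold, deduce exponential decay, bootstrap to all $\|\partial_s^m k\|_2$, then integrate $\partial_t\gamma$ --- is exactly the paper's. The key interpolation step, however, is done differently and your version contains a slip. The paper does \emph{not} use $\|k\|_\infty^2\le 2\|k\|_2\|k_s\|_2$ together with Wirtinger and ``optimal Gagliardo--Nirenberg constants''; instead it observes that since $\bar k=0$ the function $k^2$ vanishes somewhere, whence $\|k\|_\infty^4=\|k^2\|_\infty^2\le \big(2\int|kk_s|\,ds\big)^2\le 4L\int k^2k_s^2\,ds$, so that $\int k^6\,ds\le \|k\|_\infty^4\|k\|_2^2\le 4\SR\int k^2k_s^2\,ds$. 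This absorbs the entire $\int k^6$ into the single negative term $-3\int k^2k_s^2$ in the evolution of $\|k\|_2^2$, and the threshold $3/4$ is precisely the condition $4\SR\le 3$. No combination of the other negative terms is needed. One then has $(\|k\|_2^2)'\le -2\|k_{ss}\|_2^2$, and two applications of Poincar\'e yield $\SR'\le -2L^{-4}\SR\le -2L_0^{-4}\SR$.

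Your alternative route can also be made to work (indeed for a threshold larger than $3/4$), but as written it has two issues. First, $L\int k^6\le C\,\SR^2\int k_s^2$ is dimensionally inconsistent; from $\|k\|_\infty^2\le 2\|k\|_2\|k_s\|_2$ what you actually get is $L\int k^6\le 4\SR^2\,L^{-1}\int k_s^2$. Second, and more seriously, your stated differential inequality $\SR'\le -\lambda\,\SR\cdot L^{-1}\int k_s^2$ carries a spurious factor of $\SR$: after Wirtinger it yields only $\SR'\le -c\,\SR^2$, hence merely polynomial decay $\SR(t)\lesssim 1/t$, not exponential. The correct inequality coming out of your estimates (absorbing $4\SR^2 L^{-1}\|k_s\|_2^2$ into $-2L\|k_{ss}\|_2^2$ via $\|k_s\|_2^2\le \tfrac{L^2}{4\pi^2}\|k_{ss}\|_2^2$) is $\SR'\le -c\,L\|k_{ss}\|_2^2$, which after two Poincar\'e inequalities gives $\SR'\le -c'L_0^{-4}\SR$ and hence genuine exponential decay. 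Once that is fixed, the bootstrap and convergence-to-a-line arguments you sketch are fine and match the paper.
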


This article is organised as follows.
In Section \ref{sect:Evolution} we give the evolution equations for essential quantities under the flow.
Then, in Section \ref{sect:Estimates} we prove estimates for key quantities
along the flow, including that the fourth power of length is convex, under the
curvature condition of Theorem \ref{TMmain}.
In this section we also characterise the maximal existence time.
We show that the $L^2$-norm of the curvature blows up at a specific rate and
that $L(t)\rightarrow0$ as $t\nearrow T$, allowing us to identify a `final
point' (Theorem \ref{TMblowup} and Theorem \ref{TMfinalpoint}).
This is similar to the classical case of mean curvature flow \cite{Huisken} and
curve-shortening flow \cite{GH86}.

In order to illustrate the structure of the more complex argument to come for
the compact case, we next deal with in Section \ref{sect:cocompact} the
cocompact case, and prove Theorem \ref{TMcoco2}.
This can be contrasted with similar strategies in \cite{MWW17,WW17}.

Section \ref{sect:Asymptotics} considers a continuous rescaling at the final
time (for the compact case).
In this sections we switch between the original and rescaled flow as required,
and present a number of arguments that eventually yield the required
convergence.
This involves uniform estimates on length, the rescaled position vector, as
well as curvature and its derivatives.
A decisive role is played by two key estimates in this part: First, the control
on $|(L^4)'(t)|$, which is scale-invariant and persists into the rescaling.
Second, the isoperimetric ratio makes an appearance, which (combined with our
other control) allows us to obtain the requisite decay to finish the
convergence argument.

Finally, Section \ref{sect:Numerics} describes our numerical scheme and some
further simulations we conducted of the flow.

\section*{Acknowledgements}

The third athor acknowledges support from ARC Discovery Project DP180100431 and ARC DECRA DE190100379.
All of the authors are indebted to Laureate Professor Ben Andrews at the ANU,
with whom the topic of the paper was discussed at length.


\section{Evolution equations and Local existence}
\label{sect:Evolution}

First we state a straightforward existence theorem that is enough for our purposes here.

\begin{thm}
Suppose $I = \R$ or $I = \S$.
Let $\gamma_0:I\rightarrow\R^2$ be a smooth, planar immersed curve.
In the case where $I = \R$, assume the cocompactness condition
\begin{equation}
\label{CoComp}
\gamma(m+u) = me_1 + \gamma(u)\qquad\text{ where $m\in\Z$ is any integer and $e_1 = (0,1)$}
\,.
\end{equation}

There exists a smooth one-parameter family $\gamma:I\times[0,T)\rightarrow\R^2$ of immersed curves solving Chen's flow
\[
	\partial_t\gamma = -\partial_s^4\gamma
\]
with $\gamma(u,0) = \gamma_0(u)$ that are in the case of $I = \R$ cocompact according to \eqref{CoComp} and in the case of $I = \S$ closed.

If $T<\infty$ then $\vn{\gamma(t)}_{C^\infty(I)} \rightarrow \infty$ as $t\rightarrow T$.
\end{thm}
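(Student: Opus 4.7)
The plan is to reduce local well-posedness of the geometric equation to short-time existence for a scalar quasilinear fourth-order parabolic PDE by representing the unknown as a normal graph over $\gamma_0$. In a tubular neighbourhood of $\gamma_0(I)$, every sufficiently nearby immersion can be written $\gamma(u,t) = \gamma_0(u) + \rho(u,t)\nu_0(u)$ for a scalar function $\rho$ with $\vn{\rho(\cdot,t)}_{C^1}$ small. Substituting this ansatz into the normal projection of Chen's flow and expanding in $u$-derivatives produces
\[
\partial_t\rho = -|\partial_u\gamma_0|^{-4}\,\partial_u^4\rho + F\bigl(u,\rho,\partial_u\rho,\partial_u^2\rho,\partial_u^3\rho\bigr),
\]
where $F$ depends smoothly on its arguments and the leading coefficient is strictly positive while $\rho$ remains small in $C^1$. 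The tangential component of $\partial_t\gamma = -\partial_s^4\gamma$ is pure reparametrisation, recovered once $\rho$ is known by integrating a first-order ODE along flow lines for the diffeomorphism relating the geometric solution to the full parametrised flow; this is the classical DeTurck-type step.

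For the scalar PDE, linearising around $\rho \equiv 0$ gives a uniformly parabolic fourth-order operator with smooth coefficients. On a compact base manifold ($\S$ in the closed case, and $\R/\Z$ in the cocompact case after using \eqref{CoComp} to descend to a fundamental domain), standard parabolic Schauder theory together with a contraction mapping argument yields a unique short-time solution in $C^{4+\alpha,1+\alpha/4}$; parabolic bootstrap upgrades this to smoothness in both variables. In the cocompact case, since the equation is translation-invariant under $u \mapsto u+m$, uniqueness implies the symmetry \eqref{CoComp} is preserved, and so the solution defines a genuine cocompact immersion of $\R$. Pulling $\rho$ back through the reparametrisation diffeomorphism produces the desired immersed curve family solving $\partial_t\gamma = -\partial_s^4\gamma$ pointwise.

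For the continuation criterion, suppose towards a contradiction that $T < \infty$ and $\vn{\gamma(\cdot,t)}_{C^\infty(I)}$ remains uniformly bounded as $t\nearrow T$. A uniform $C^\infty$ bound together with a uniform positive lower bound on $|\partial_u\gamma|$ (which follows from the $C^\infty$ bound since the immersion condition is open and controlled by $\partial_t|\partial_u\gamma|$) lets us extract via Arzel\`a--Ascoli a smooth immersed limit curve $\gamma(\cdot,T)$ satisfying the hypotheses of the theorem (cocompactness passes to the limit in the non-closed case). Applying short-time existence with $\gamma(\cdot,T)$ as initial data yields a smooth extension past $T$, contradicting maximality. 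The only genuine obstacle is handling the parametrisation degeneracy of $\partial_s^4$, which the normal-graph reduction circumvents; after that the proof is a routine application of the theory of quasilinear higher-order parabolic equations.
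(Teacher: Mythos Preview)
Your proposal is correct and follows essentially the same approach the paper sketches: write the evolving curve as a normal graph over the initial curve, reduce to a scalar quasilinear fourth-order parabolic equation solved by standard theory, and iterate (equivalently, argue by contradiction with Arzel\`a--Ascoli) to obtain the blowup criterion. One minor imprecision: the leading coefficient in your scalar PDE should be $-|\partial_u\gamma|^{-4}$ (depending on $\rho,\partial_u\rho$ through the graph), not the fixed $-|\partial_u\gamma_0|^{-4}$; your subsequent remark that positivity holds ``while $\rho$ remains small in $C^1$'' suggests you have the right quasilinear picture in mind, so this is only a notational slip.
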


In the above we use $C^\infty(I)$ to denote either the standard $C^\infty$ space if $I = \S$, or, in the case where $I=\R$, $C^\infty(I) = C^\infty((0,1))$.

The proof of this result follows by writing the flow as a graph over the initial curve, then considering the flow as the evolution of the distance function from the initial curve.
For a short time this can be solved by standard theory, and then iterating this construction gives the characterisation of the maximal existence time above.

We now give the essential evolution equations. The proof is standard, and so we omit it.

\begin{lem}\label{LMevo}
Suppose $\gamma:I\times[0,T)\rightarrow\R^2$ evolves by Chen's flow. Then
\begin{align*}
\partial_t\, ds
&= (Fk + G_s)\, ds
\\
[\partial_t,\partial_s]
&= -(Fk+G_s)\partial_s
\\
\partial_tk
&= -F_{ss}-Fk^2 + Gk_s
\\
\partial_tk_s
&= - F_{s^3} - F_sk^2 - 3Fkk_s + Gk_{ss}
\end{align*}
where $\partial_t\gamma = -F\nu + G\tau$, that is $F = k_{ss} - k^3$ and $G = 3kk_s$.
\end{lem}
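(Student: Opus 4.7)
The plan is to derive each evolution equation in turn from the general formula $\partial_t\gamma = -F\nu + G\tau$, using only the Frenet--Serret relations $\tau_s = k\nu$, $\nu_s = -k\tau$ (consistent with the convention $\gamma_{ss} = k\nu$) together with the definition $\partial_s = |\gamma_u|^{-1}\partial_u$. None of the individual computations is difficult; the main task is simply to order them so that each evolution equation is available when the next one needs it.

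First I would compute $\partial_t\, ds$. Writing $ds = |\gamma_u|\,du$, I differentiate $|\gamma_u|^2 = \langle\gamma_u,\gamma_u\rangle$ in time and commute $\partial_t$ with $\partial_u$ (which, unlike $\partial_s$, commute freely). Expanding $\partial_u(-F\nu + G\tau) = |\gamma_u|[(Gk-F_s)\nu + (Fk+G_s)\tau]$ via Frenet--Serret and taking the $\tau$-component yields $\partial_t|\gamma_u| = (Fk+G_s)|\gamma_u|$, hence $\partial_t\,ds = (Fk+G_s)\,ds$. The commutator formula is then immediate: for a scalar $f$,
\[
[\partial_t,\partial_s]f = \partial_t\!\left(\tfrac{1}{|\gamma_u|}\partial_u f\right) - \tfrac{1}{|\gamma_u|}\partial_u\partial_t f = -\tfrac{\partial_t|\gamma_u|}{|\gamma_u|^2}\partial_u f = -(Fk+G_s)\partial_s f.
\]

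Next I would derive the evolution of the Frenet frame in order to get at $k$. Applying the commutator to $\gamma$ itself gives $\tau_t = \partial_t\partial_s\gamma = \partial_s(-F\nu+G\tau) - (Fk+G_s)\tau$, which simplifies to $\tau_t = (Gk - F_s)\nu$ after using Frenet--Serret. Differentiating $\langle\nu,\tau\rangle = 0$ in time then forces $\nu_t = (F_s - Gk)\tau$. Now writing $k = \langle \partial_s\tau, \nu\rangle$, or equivalently differentiating $\gamma_{ss} = k\nu$ in time, I get
\[
k_t\nu + k\nu_t = \partial_t\partial_s\tau = \partial_s\bigl((Gk-F_s)\nu\bigr) - (Fk+G_s)k\nu,
\]
and taking the $\nu$-component (using $\nu_s = -k\tau\perp\nu$) produces the terms $G_sk + Gk_s - F_{ss} - Fk^2 - G_sk$, which collapse to $\partial_t k = -F_{ss} - Fk^2 + Gk_s$.

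Finally, for $\partial_t k_s$ I would simply apply $\partial_s$ to the $k$ equation and correct with the commutator:
\[
\partial_t k_s = \partial_s\partial_t k + [\partial_t,\partial_s]k = \partial_s(-F_{ss} - Fk^2 + Gk_s) - (Fk+G_s)k_s,
\]
and the $-2Fkk_s$ from expanding $\partial_s(Fk^2)$ combines with the cross term $-Fkk_s$ from $(Fk+G_s)k_s$ and the $G_sk_s$ terms cancel, leaving $-F_{s^3} - F_sk^2 - 3Fkk_s + Gk_{ss}$ as claimed. The only place to be careful is bookkeeping the cancellation between the $G_sk_s$ produced by $\partial_s(Gk_s)$ and the one produced by the commutator; otherwise everything is mechanical.
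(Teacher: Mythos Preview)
Your derivation is correct and is exactly the standard computation one would expect here; the paper in fact omits the proof entirely, stating only that it is standard. Your ordering (arc-length element, commutator, frame evolution, then $k$ and $k_s$) and the bookkeeping of the $G_sk_s$ cancellation are all fine.
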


The evolution of length is as follows.

\begin{lem}\label{LMlengthevo}
Suppose $\gamma:I\times[0,T)\rightarrow\R^2$ evolves by Chen's flow. Then
\[
	(L^4)'(t) = -4L^3\int_\gamma k_s^2 + k^4\,ds = -4\SQ(t)
	\,,
\]
and
\begin{equation}\label{EQl4evo}
\begin{split}
	-(L^4)''(t) &= 4\SQ'(t) =
	- 8L^3 \int_\gamma k_{s^3}^2 \,ds
	- 80L^3 \int_\gamma k_{ss}^2k^2 \,ds
	+ 60L^3 \int_\gamma k_s^4 \,ds
	- 88L^3 \int_\gamma k_s^2k^4 \,ds
\\&\qquad
	-12L^2\vn{k_s}_2^4
	-24L^2\vn{k_s}_2^2\vn{k}_4^4
	+ 12L^2\Big( L\vn{k}_8^8 - \vn{k}_4^8 \Big)
\,.
\end{split}
\end{equation}
\end{lem}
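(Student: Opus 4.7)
The plan is to derive both identities by differentiating and substituting from Lemma~\ref{LMevo}, then integrating by parts. In both the closed ($I=\S$) and cocompact ($I=\R$) cases all boundary terms vanish, so integration by parts may be carried out freely. For the first identity, $L'(t)=\int_\gamma\partial_t(ds)=\int_\gamma(Fk+G_s)\,ds$; the divergence $\int_\gamma G_s\,ds$ vanishes, and $\int_\gamma Fk\,ds = \int_\gamma (kk_{ss}-k^4)\,ds = -\int_\gamma (k_s^2+k^4)\,ds$ after one integration by parts. Multiplying by $4L^3$ yields $(L^4)'=-4\SQ$.

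For the second identity, I would split $(L^4)'' = 12L^2(L')^2 + 4L^3 L''$. Expanding $(L')^2 = \bigl(\int_\gamma (k_s^2+k^4)\,ds\bigr)^2$ binomially gives the three lower-order contributions $12L^2\vn{k_s}_2^4 + 24L^2\vn{k_s}_2^2\vn{k}_4^4 + 12L^2\vn{k}_4^8$, accounting (with the correct signs) for three of the non-scale-invariant pieces on the right-hand side of \eqref{EQl4evo}. For $L'' = -\frac{d}{dt}\int(k_s^2+k^4)\,ds$, I would compute $\frac{d}{dt}\int k_s^2\,ds$ and $\frac{d}{dt}\int k^4\,ds$ separately using
\[
\frac{d}{dt}\int_\gamma f\,ds = \int_\gamma \partial_t f\,ds + \int_\gamma f(Fk+G_s)\,ds,
\]
substituting $\partial_tk$ and $\partial_tk_s$ from Lemma~\ref{LMevo} expanded as polynomials in $k$ and its arclength derivatives, together with the identity $Fk+G_s = 4kk_{ss} + 3k_s^2 - k^4$.

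This produces a long linear combination of integrals of monomials in $k,k_s,\dots,k_{s^5}$. All derivatives of order $\ge 4$ are reduced to order $\le 3$ by integration by parts, using identities that follow from $\int_\gamma(\,\cdot\,)_s\,ds = 0$, chiefly
\[
\int_\gamma kk_s^2k_{ss}\,ds = -\tfrac13\int_\gamma k_s^4\,ds,\quad
\int_\gamma k^5k_{ss}\,ds = -5\int_\gamma k^4k_s^2\,ds,\quad
\int_\gamma k_sk_{s^5}\,ds = \int_\gamma k_{s^3}^2\,ds,
\]
together with $\int_\gamma k^2k_sk_{s^3}\,ds = \tfrac23\int_\gamma k_s^4\,ds - \int_\gamma k^2k_{ss}^2\,ds$ and $\int_\gamma k^3k_{s^4}\,ds = -2\int_\gamma k_s^4\,ds + 3\int_\gamma k^2k_{ss}^2\,ds$. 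After these reductions only the five canonical integrals $\int k_{s^3}^2$, $\int k_{ss}^2k^2$, $\int k_s^4$, $\int k_s^2k^4$, $\int k^8$ remain, and collecting coefficients yields the stated formula (the $12L^3\int k^8\,ds$ contribution combines with $-12L^2\vn{k}_4^8$ into the displayed grouping $12L^2(L\vn{k}_8^8-\vn{k}_4^8)$). The main obstacle is purely organisational: the number of intermediate integrals is large, and sign and coefficient errors propagate easily through repeated integration by parts, so I would adopt a disciplined reduction order---always eliminating the highest-order derivative first, then the highest power of $k$.
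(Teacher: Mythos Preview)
Your proposal is correct and follows essentially the same route as the paper: compute $L'$ from Lemma~\ref{LMevo} and integrate by parts for the first identity, then write $-(L^4)''=4L^3\frac{d}{dt}\int_\gamma(k_s^2+k^4)\,ds-12L^2\big(\int_\gamma(k_s^2+k^4)\,ds\big)^2$, evaluate the time derivative via the evolution equations, and reduce by repeated integration by parts to the five canonical integrals. The only organisational difference is that the paper keeps $F$ and $G$ symbolic for as long as possible (deriving $\partial_t(k_s^2\,ds)$ and $\partial_t(k^4\,ds)$ in terms of $F$ before substituting $F=k_{ss}-k^3$), whereas you substitute earlier and rely on a catalogue of reduction identities; both bookkeeping strategies lead to the same coefficients.
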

\begin{proof}
From the evolution equations (Lemma \ref{LMevo}) we find
\begin{equation}
\label{EQlengthevo}
\begin{split}
L'
&= \frac{d}{dt} \int_\gamma ds
\\
&= \int_\gamma Fk + G_s ds
\,.
\end{split}
\end{equation}
In both the cocoompact and compact cases, integration by parts implies that
\[
	L' = \int_\gamma kk_{ss} - k^4\,ds = -\int_\gamma k_s^2 + k^4\,ds
\]
as claimed.

To prepare for the second part we derive two further evolution equations:
\begin{equation}
\label{EQevoks2}
\partial_t (k_s^2\,ds)
=
 \bigg(
 - 2F_{s^3}k_s
 - 2F_sk^2k_s
 - 5Fkk_s^2
 + \partial_s(k_s^2G)
 \bigg)\, ds
\end{equation}
and
\begin{equation}
\label{EQevok4}
\partial_t (k^4\,ds)
= \bigg(
   	- 4F_{ss}k^3 - 3Fk^5
	+ \partial_s(k^4G)
	\bigg)\, ds
\,.
\end{equation}
We start with
\begin{align*}
\partial_t (k_s^2\,ds)
&=
 \bigg(
 2k_s
   \Big(- F_{s^3} - F_sk^2 - 3Fkk_s + Gk_{ss}\Big)
 + k_s^2 (Fk + G_s)
 \bigg)\, ds
\\
&=
 \bigg(
 - 2F_{s^3}k_s - 2F_sk^2k_s - 6Fkk_s^2 + 2k_{ss}k_sG
 + k_s^2Fk
 + k_s^2G_s
 \bigg)\, ds
\\
&=
 \bigg(
 - 2F_{s^3}k_s
 - 2F_sk^2k_s
 - 5Fkk_s^2
 + \partial_s(k_s^2G)
 \bigg)\, ds
\,,
\end{align*}
which shows \eqref{EQevoks2}.
For \eqref{EQevok4} we similarly find
\begin{align*}
\partial_t (k^4\,ds)
&= \bigg(
   	  4k^3(-F_{ss}-Fk^2 + Gk_s)
	+ k^4 (Fk + G_s)
	\bigg)\, ds
\\
&= \bigg(
   	- 4F_{ss}k^3 - 3Fk^5
	+ \partial_s(k^4G)
	\bigg)\, ds
\,.
\end{align*}
Putting together \eqref{EQevoks2} and \eqref{EQevok4}, we calculate
\begin{align*}
-(L^4)''(t)
&=
	4\frac{d}{dt}\bigg(
		L^3 \int_\gamma k_s^2 + k^4\,ds
	\bigg)
\\
&=
	4L^3\frac{d}{dt} \int_\gamma k_s^2 + k^4\,ds
	-12L^2\bigg(
		\int_\gamma k_s^2 + k^4\,ds
	\bigg)^2
\\
&=
	4L^3 \int_\gamma
 \bigg(
 - 2F_{s^3}k_s
 - 2F_sk^2k_s
 - 5Fkk_s^2
 + \partial_s(k_s^2G)
 \bigg)\, ds
\\&\qquad
	+ 4L^3 \int_\gamma
  \bigg(
   	- 4F_{ss}k^3 - 3Fk^5
	+ \partial_s(k^4G)
	\bigg)\, ds
\\&\qquad
	-12L^2\vn{k_s}_2^4
	-24L^2\vn{k_s}_2^2\vn{k}_4^4
	-12L^2\vn{k}_4^8
\\
&=
	4L^3 \int_\gamma
 \bigg(
 - 2F_sk_{s^3}
 - 2F(k^2k_{ss} + 2k_s^2k)
 - 5Fkk_s^2
 \bigg)\, ds
\\&\qquad
	+ 4L^3 \int_\gamma
  \bigg(
   	- 4F(3k^2k_{ss} + 6k_s^2k) - 3Fk^5
	\bigg)\, ds
\\&\qquad
	-12L^2\vn{k_s}_2^4
	-24L^2\vn{k_s}_2^2\vn{k}_4^4
	-12L^2\vn{k}_4^8
\\
&=
	4L^3 \int_\gamma
 \bigg(
 - 2F_sk_{s^3}
 - 14Fk^2k_{ss}
 - 33Fk_s^2k
 - 3Fk^5
	\bigg)\, ds
\\&\qquad
	-12L^2\vn{k_s}_2^4
	-24L^2\vn{k_s}_2^2\vn{k}_4^4
	-12L^2\vn{k}_4^8
\\
&=
	- 8L^3 \int_\gamma
		k_{s^3}^2
		\,ds
	- 24L^3 \int_\gamma
		k_{ss}^2k^2
		\,ds
	- 48L^3 \int_\gamma
		k_{ss}k_s^2k
		\,ds
\\&\qquad
	+ 4L^3 \int_\gamma
 \bigg(
 - 14k^2k_{ss}^2
 + 14k^5k_{ss}
 - 33k_{ss}k_s^2k
 + 33k_s^2k^4
 - 3k_{ss}k^5
 + 3k^8
	\bigg)\, ds
\\&\qquad
	-12L^2\vn{k_s}_2^4
	-24L^2\vn{k_s}_2^2\vn{k}_4^4
	-12L^2\vn{k}_4^8
\\
&=
	- 8L^3 \int_\gamma
		k_{s^3}^2
		\,ds
	- 24L^3 \int_\gamma
		k_{ss}^2k^2
		\,ds
	- 48L^3 \int_\gamma
		k_{ss}k_s^2k
		\,ds
\\&\qquad
	+ 4L^3 \int_\gamma
 \bigg(
 - 14k^2k_{ss}^2
 + 14k^5k_{ss}
 - 33k_{ss}k_s^2k
 + 33k_s^2k^4
 - 3k_{ss}k^5
	\bigg)\, ds
\\&\qquad
	-12L^2\vn{k_s}_2^4
	-24L^2\vn{k_s}_2^2\vn{k}_4^4
	+ 12L^2\Big(
		L\vn{k}_8^8 - \vn{k}_4^8
	\Big)
\\
&=
	- 8L^3 \int_\gamma
		k_{s^3}^2
		\,ds
	- 80L^3 \int_\gamma
		k_{ss}^2k^2
		\,ds
	+ 60L^3 \int_\gamma
		k_s^4
		\,ds
	- 88L^3 \int_\gamma
		k_s^2k^4
		\,ds
\\&\qquad
	-12L^2\vn{k_s}_2^4
	-24L^2\vn{k_s}_2^2\vn{k}_4^4
	+ 12L^2\Big(
		L\vn{k}_8^8 - \vn{k}_4^8
	\Big)
\,,
\end{align*}
as required.
\end{proof}

The $L^2$-norm of the curvature evolves as follows.

\begin{lem}\label{LMl2kevo}
Suppose $\gamma:I\times[0,T)\rightarrow\R^2$ evolves by Chen's flow. Then
\[
	\frac{d}{dt}\int_\gamma k^2\,ds
	 =
        - 2\int_\gamma k_{ss}^2\,ds
	- 3\int_\gamma k_s^2k^2\,ds
	+ \int_\gamma k^6\, ds
\,.
\]
\end{lem}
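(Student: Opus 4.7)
The proof is a direct computation using the evolution equations already established in Lemma \ref{LMevo}, so there is no genuine obstacle; the plan is simply to organise the integration by parts efficiently and check that boundary terms vanish.

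First I would write
\[
	\partial_t(k^2\,ds) = 2k\,\partial_t k\,ds + k^2\,\partial_t\,ds
\]
and substitute the expressions $\partial_t k = -F_{ss} - Fk^2 + Gk_s$ and $\partial_t\,ds = (Fk + G_s)\,ds$ from Lemma \ref{LMevo}. Collecting terms, the $Fk^2\cdot k$ contributions combine to $-Fk^3$, and the tangential contributions $2Gkk_s + k^2 G_s$ assemble into the exact derivative $\partial_s(k^2 G)$. Thus
\[
	\partial_t(k^2\,ds) = \bigl(-2kF_{ss} - Fk^3 + \partial_s(k^2 G)\bigr)\,ds.
\]

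Next, integrating over $\gamma$, the $\partial_s(k^2 G)$ term vanishes in both the closed and cocompact settings (in the cocompact case $k$ and its derivatives are periodic by \eqref{CoComp}). Integrating by parts twice on $-2kF_{ss}$ gives $-2k_{ss}F$, so
\[
	\frac{d}{dt}\int_\gamma k^2\,ds = \int_\gamma \bigl(-2k_{ss}F - Fk^3\bigr)\,ds.
\]
Now I would substitute $F = k_{ss} - k^3$ explicitly, obtaining
\[
	-2k_{ss}F - Fk^3 = -2k_{ss}^2 + 2k_{ss}k^3 - k_{ss}k^3 + k^6 = -2k_{ss}^2 + k_{ss}k^3 + k^6.
\]

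Finally, I would dispatch the cross term by one more integration by parts:
\[
	\int_\gamma k_{ss}k^3\,ds = -\int_\gamma k_s\cdot 3k^2 k_s\,ds = -3\int_\gamma k_s^2 k^2\,ds,
\]
again with no boundary contribution. Combining the three resulting terms yields exactly
\[
	\frac{d}{dt}\int_\gamma k^2\,ds = -2\int_\gamma k_{ss}^2\,ds - 3\int_\gamma k_s^2 k^2\,ds + \int_\gamma k^6\,ds,
\]
as claimed. The only point worth double-checking during the computation is bookkeeping of the coefficient of the $k_{ss}k^3$ term before its integration by parts, since this is where sign errors typically creep in.
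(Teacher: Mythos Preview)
Your proof is correct and follows essentially the same approach as the paper: compute $\partial_t(k^2\,ds)$ from Lemma \ref{LMevo}, recognise the tangential piece as $\partial_s(k^2G)$, integrate by parts on $-2kF_{ss}$, substitute $F = k_{ss}-k^3$, and finish with one more integration by parts on $\int k_{ss}k^3\,ds$. The organisation and bookkeeping match the paper's proof line for line.
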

\begin{proof}
Using Lemma \ref{LMevo} we find
\begin{align*}
	(k^2\,ds)_t
	&=
          2k(-F_{ss}-Fk^2 + Gk_s) + k^2(Fk + G_s)\, ds
\\
	&=
          -2kF_{ss} - k^3F + (Gk^2)_s \, ds
\,.
\end{align*}
Then
\begin{align*}
\frac{d}{dt}\int_\gamma k^2\,ds
 &=
          -2 \int_\gamma k_{ss}F\,ds - \int_\gamma k^3F\, ds
\\
 &=
        - 2\int_\gamma k_{ss}^2\,ds
	+ \int_\gamma k_{ss}k^3\,ds
	+ \int_\gamma k^6\, ds
\\
 &=
        - 2\int_\gamma k_{ss}^2\,ds
	- 3\int_\gamma k_s^2k^2\,ds
	+ \int_\gamma k^6\, ds\,,
\end{align*}
as required.
\end{proof}

\section{Fundamental estimates and contraction to a point}
\label{sect:Estimates}

The main difficulty is in obtaining control over the compact case.
We include also the cocompact case for contrast, in Section \ref{sect:cocompact}.

Our main goal is to show that the flow terminates in a point.
This is a crucial step: identification of the final point allows us to remove
any kind of `modulo translation' proviso from our blowup analysis.
For this we need an initial condition on the curvature.

We begin by giving a sufficient condition for the boundedness of the $L^2$-norm of curvature on a bounded time interval.

\begin{lem}\label{LMkinl2}
Suppose $\gamma:I\times[0,T_0]\rightarrow\R^2$ is a compact Chen flow. Assume that there exists a $C>0$ and $D<\infty$ such that
\begin{equation}
\label{EQlnotzero}
L(t) \ge C\quad \text{ for all }\quad t\in[0,T_0]
\end{equation}
and
\begin{equation}
\label{EQksnotinfty}
\vn{k_s}_2^2 \le D\quad \text{ for all }\quad t\in[0,T_0]
\,.
\end{equation}
Then
\[
	\int_\gamma k^2\,ds
	 \le
 	\int_\gamma k^2\, ds\bigg|_{t=0}
	e^{R(C,D,\omega)t}
\]
where $R(C,D,\omega) = \frac{L_0^2}{4\omega^2\pi^2} D^2 + 4
\frac{L_0^\frac12}{(2\omega\pi)^\frac12} D^\frac32 + \frac{12\omega\pi}{C} D +
4\frac{(2\omega\pi)^\frac52}{C^\frac52} D^\frac12 +
\frac{16\omega^4\pi^4}{C^4}$.
\end{lem}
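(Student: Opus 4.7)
The plan is a Gronwall argument based on the evolution equation for $y(t) := \int_\gamma k^2\,ds$. By Lemma \ref{LMl2kevo},
\[
y'(t) = -2\int_\gamma k_{ss}^2\,ds - 3\int_\gamma k_s^2 k^2\,ds + \int_\gamma k^6\,ds \le \int_\gamma k^6\,ds,
\]
so the entire task is to produce a pointwise-in-time estimate of the form $\int_\gamma k^6\,ds \le R(C,D,\omega)\, y(t)$ with $R$ of the stated form. Once this is in place, Gronwall's inequality yields $y(t)\le y(0) e^{Rt}$.

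To estimate $\int_\gamma k^6\,ds$, I would decompose $k = \bar k + \phi$, where $\bar k = 2\omega\pi/L$ is the (constant) average curvature and $\phi := k - \bar k$ has mean zero. Since $(L^4)'(t) = -4\SQ(t)\le 0$ by Lemma \ref{LMlengthevo}, the length is non-increasing, so $L(t)\le L_0$; combined with the hypothesis $L(t)\ge C$ this yields $|\bar k|\le 2\omega\pi/C$. Cauchy--Schwarz applied to $2\omega\pi = \int_\gamma k\,ds$ also provides the scale-invariant identity $L\bar k^2 \le y(t)$, which is the main device for absorbing factors of $1/L$ into $y$. For the oscillation, the mean-zero property and the Wirtinger inequality give $\|\phi\|_2^2 \le (L/(2\pi))^2\|k_s\|_2^2$, and the standard interpolation $\|\phi\|_\infty^2 \le \|\phi\|_2\|\phi_s\|_2$ (obtained by selecting $x_0\in\gamma$ with $\phi(x_0)=0$, integrating $(\phi^2)_s$ along each arc from $x_0$, applying Cauchy--Schwarz, and averaging the two resulting inequalities) gives $\|\phi\|_\infty^2 \le (L/(2\pi))\|k_s\|_2^2 \le L_0 D/(2\pi)$.

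With these ingredients, I would expand $k^6 = (\bar k+\phi)^6$ by the binomial theorem, integrate, and use $\int_\gamma\phi\,ds = 0$ to kill the linear-in-$\phi$ term, obtaining
\[
\int_\gamma k^6\,ds = L\bar k^6 + 15\bar k^4\!\int_\gamma\phi^2\,ds + 20\bar k^3\!\int_\gamma\phi^3\,ds + 15\bar k^2\!\int_\gamma\phi^4\,ds + 6\bar k\!\int_\gamma\phi^5\,ds + \int_\gamma\phi^6\,ds.
\]
Each summand is then controlled by $y$: the first is handled by $L\bar k^6 = \bar k^4\cdot L\bar k^2 \le (2\omega\pi/C)^4\, y$, which is precisely the $D^0$ contribution $16\omega^4\pi^4/C^4$. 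For $j\ge 2$ one uses $|\int_\gamma\phi^j\,ds|\le \|\phi\|_\infty^{j-2}\!\int_\gamma\phi^2\,ds$ combined with either $\int_\gamma\phi^2\,ds\le y$ or the Wirtinger bound $\int_\gamma\phi^2\,ds\le L^2 D/(4\pi^2)$, together with the pointwise bound $\|\phi\|_\infty^2\le L_0 D/(2\pi)$; the factor $\|\phi\|_\infty^{j-2}$ generates a power $D^{(j-2)/2}$, while the choice of bounds on $\bar k$ and $\int\phi^2\,ds$ distributes the $C$-, $L_0$- and $\omega$-dependence. Running $j$ through $\{2,3,4,5,6\}$ produces respectively the $D^0$ (or $D$), $D^{1/2}$, $D$, $D^{3/2}$, and $D^2$ terms of $R$, and summing gives $\int_\gamma k^6\,ds\le R\, y$.

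The main obstacle is not conceptual but the careful balancing required to match the stated $R$ exactly: for each of the five nonzero binomial terms one must choose the right mixture of (i) the two available bounds on $|\bar k|$ (the pointwise $2\omega\pi/C$ versus the scale-invariant $L\bar k^2\le y$), and (ii) the two available estimates on $\phi$ (Wirtinger in $L^2$ versus the Sobolev $\|\phi\|_\infty$ bound), so that the combinatorial coefficients $\binom{6}{j}$ and the powers of $L_0$, $C$, $2\omega\pi$, $D$ line up with the five explicit summands of $R(C,D,\omega)$. Once the balancing is done, Gronwall closes the argument.
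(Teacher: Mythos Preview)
Your overall strategy---bound $\int_\gamma k^6\,ds \le R\,y(t)$ and apply Gronwall via Lemma~\ref{LMl2kevo}---is exactly the paper's. The difference lies in the decomposition, and that difference is precisely what prevents you from landing on the stated $R$.

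The paper does \emph{not} expand all six factors of $k$. It writes $k^6 = k^2\cdot k^4$ and expands only the $k^4$ factor binomially:
\[
\int_\gamma k^6\,ds \;=\; \sum_{j=0}^{4}\binom{4}{j}\,\bar k^{\,4-j}\!\int_\gamma k^2(k-\bar k)^j\,ds .
\]
Every term already carries the factor $\int_\gamma k^2(\cdot)\,ds$, so one simply pulls out $\|k-\bar k\|_\infty^{j}$ and is left with $y(t)$ times a constant. The five binomial coefficients $1,4,6,4,1$ become, after the algebra, exactly the five numerical coefficients appearing in $R$. Your full expansion of $(\bar k+\phi)^6$ instead produces six nonzero terms with coefficients $1,15,20,15,6,1$, and you then have to \emph{recover} a factor of $y$ in each one (via $\int\phi^2\le y$ or $L\bar k^2\le y$); the ``balancing'' you anticipate cannot collapse these back to the stated $R$---for instance your $j=0$ and $j=2$ terms both feed the $D^0$ slot, overshooting $16\omega^4\pi^4/C^4$.

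A second discrepancy is the oscillation bound. The paper uses
\[
\|k-\bar k\|_\infty^2 \le \frac{L}{2\omega\pi}\int_\gamma k_s^2\,ds ,
\]
with $\omega$ in the constant; your Wirtinger/Sobolev estimates carry $2\pi$ rather than $2\omega\pi$. That $\omega$ is what places the explicit $\omega$-powers into the $D^2$ and $D^{3/2}$ terms of $R$, and it cannot be manufactured from $\bar k = 2\omega\pi/L$ alone in your expansion.

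In short: your argument would yield a perfectly valid Gronwall constant $R'(C,D,L_0,\omega)$, which is all that the application in Theorem~\ref{TMfinalpoint} actually needs, but not the specific $R$ recorded in the lemma. To reproduce that $R$, factor out $k^2$ first and expand only $k^4$.
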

\begin{proof}
We first express four powers of $k$ in $\vn{k}_6^6$ in terms of $(k-\k)$ and $\k$:
\begin{equation}
\label{EQexpando}
\begin{split}
\int_\gamma k^6\,ds
	&= \int_\gamma k^2(k-\k)^4\,ds
	 + 4\k\int_\gamma k^2(k-\k)^3\,ds
\\&\qquad
	 + 6\k^2\int_\gamma k^2(k-\k)^2\,ds
	 + 4\k^3\int_\gamma k^2(k-\k)\,ds
	 +  \k^4\int_\gamma k^2\,ds
\\
	&= \int_\gamma k^2(k-\k)^4\,ds
	 + \frac{8\omega\pi}{L}\int_\gamma k^2(k-\k)^3\,ds
\\&\qquad
	 + \frac{24\omega^2\pi^2}{L^2}\int_\gamma k^2(k-\k)^2\,ds
	 + \frac{32\omega^3\pi^3}{L^3}\int_\gamma k^2(k-\k)\,ds
	 + \frac{16\omega^4\pi^4}{L^4}\int_\gamma k^2\,ds
\,.
\end{split}
\end{equation}
The Poincar\'e inequality implies
\begin{equation}
\label{EQlinfkoscks}
\vn{k-\k}_\infty^2 \le \frac{L}{2\omega\pi} \int_\gamma k_s^2\,ds
\,.
\end{equation}
Combining \eqref{EQexpando} with \eqref{EQlinfkoscks} we have
\begin{equation}
\label{EQest1}
\begin{split}
\int_\gamma k^6\,ds
	&\le \bigg(
		\frac{L^2}{4\omega^2\pi^2} \vn{k_s}_2^4
	 + \frac{8\omega\pi}{L}
	   \frac{L^\frac32}{(2\omega\pi)^\frac32} \vn{k_s}_2^3
\\&\qquad
	 + \frac{24\omega^2\pi^2}{L^2}
	   \frac{L}{2\omega\pi} \vn{k_s}_2^2
	 + \frac{32\omega^3\pi^3}{L^3}
	   \frac{L^\frac12}{(2\omega\pi)^\frac12} \vn{k_s}_2
	 + \frac{16\omega^4\pi^4}{L^4}
		\bigg)\int_\gamma k^2\,ds
\\
	&\le \bigg(
		\frac{L^2}{4\omega^2\pi^2} \vn{k_s}_2^4
	 + 4
	   \frac{L^\frac12}{(2\omega\pi)^\frac12} \vn{k_s}_2^3
\\&\qquad
	 + \frac{12\omega\pi}{L}
	   \vn{k_s}_2^2
	 + 4\frac{(2\omega\pi)^\frac52}{L^\frac52}
	   \vn{k_s}_2
	 + \frac{16\omega^4\pi^4}{L^4}
		\bigg)\int_\gamma k^2\,ds
\,.
\end{split}
\end{equation}
Applying our hypotheses \eqref{EQlnotzero} and \eqref{EQksnotinfty} to the
estimate \eqref{EQest1} (as well as using monotonicity of length, which holds
without special hypotheses) we find
\begin{equation}
\label{EQest2}
\begin{split}
\int_\gamma k^6\,ds
	&\le \bigg(
		\frac{L_0^2}{4\omega^2\pi^2} D^2
	 + 4 \frac{L_0^\frac12}{(2\omega\pi)^\frac12} D^\frac32
\\&\qquad
	 + \frac{12\omega\pi}{C} D
	 + 4\frac{(2\omega\pi)^\frac52}{C^\frac52} D^\frac12
	 + \frac{16\omega^4\pi^4}{C^4}
		\bigg)\int_\gamma k^2\,ds
\\
	&= R(C,D,\omega)\int_\gamma k^2\,ds
\,,
\end{split}
\end{equation}
where $R(C,D,\omega)$ is as in the statement of the Lemma.
Note that $R$ is independent of time.

Then from the evolution equation (Lemma \ref{LMl2kevo}), we find
\[
	\frac{d}{dt}\int_\gamma k^2\,ds
	 \le
	R(C,D,\omega) \int_\gamma k^2\, ds
\,.
\]
Therefore we have
\[
	\int_\gamma k^2\,ds
	 \le
 	\int_\gamma k^2\, ds\bigg|_{t=0}
	e^{R(C,D,\omega)t}
\]
as required.
\end{proof}

Now let us show that the fourth power of length is convex under a scale-invariant initial condition.

\begin{prop}\label{PRl4convex}
Suppose $\gamma:I\times[0,T)\rightarrow\R^2$ is a compact Chen flow with initial data $\gamma_0:I\rightarrow\R^2$ being an immersed $\omega$-circle satisfying
\[
	\SQ(0) = L^3\int_\gamma k_s^2 + k^4\,ds\bigg|_{t=0} < 16\omega^4\pi^4 + \varepsilon_1
\]
where $\varepsilon_1$ is the universal constant depending only on $\omega$ from \eqref{EQseteps1}.

Then the length functional $L:[0,T)\rightarrow\R$ is convex, and there exists an $\varepsilon_2>0$ such that the estimate
\begin{equation}
\label{EQsql1}
\SQ(t_2) + \int_{t_1}^{t_2} 3L^2\vn{k_s}_2^4 + \varepsilon_2 L^3\vn{k_{s^3}}_2^2\,dt \le \SQ(t_1)
\end{equation}
holds.
In particular, $\SQ(t_2) < \SQ(t_1)$ for all $t_1<t_2$ with equality if and
only if $\gamma_{t_1}$ is a standard round $\omega$-circle.
\end{prop}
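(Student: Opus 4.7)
The plan is to work directly from the formula for $4\SQ'(t) = -(L^4)''(t)$ established in Lemma \ref{LMlengthevo}. The two potentially positive contributions in that expression are $60L^3\int_\gamma k_s^4\,ds$ and $12L^2\bigl(L\vn{k}_8^8 - \vn{k}_4^8\bigr)$. A direct computation rewrites the second of these as $12L^3\int_\gamma (k^4-\overline{k^4})^2\,ds$, where $\overline{k^4}=\frac{1}{L}\int_\gamma k^4\,ds$; this makes manifest that it vanishes on a round $\omega$-circle, as does the first bad term. The entire strategy is then to absorb both into the good negative pieces $-8L^3\vn{k_{s^3}}_2^2$, $-80L^3\int_\gamma k_{ss}^2 k^2\,ds$, $-88L^3\int_\gamma k_s^2 k^4\,ds$, $-12L^2\vn{k_s}_2^4$ and $-24L^2\vn{k_s}_2^2\vn{k}_4^4$, leaving the margin $-3L^2\vn{k_s}_2^4 - \varepsilon_2 L^3\vn{k_{s^3}}_2^2$ demanded by \eqref{EQsql1}.

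The crucial leverage is that the hypothesis on $\SQ(0)$ is a scale-invariant smallness condition on $\vn{k_s}_2^2$. Jensen's inequality applied to $k\mapsto k^4$ together with $\int_\gamma k\,ds = 2\omega\pi$ gives
\[
L^3\int_\gamma k^4\,ds \ge L^3\cdot L\,\bigl(\tfrac{2\omega\pi}{L}\bigr)^4 = 16\omega^4\pi^4,
\]
so the hypothesis forces $L^3\vn{k_s}_2^2 < \varepsilon_1$, which is the scale-invariant small factor that will multiply each bad term after interpolation. For the interpolation I would invoke Gagliardo--Nirenberg on closed curves (exploiting that $k_s$ has vanishing integral) together with iterated Poincar\'e to produce bounds of the form
\[
L^3\vn{k_s}_4^4 \le \eta L^3\vn{k_{s^3}}_2^2 + C(\eta,\omega)\bigl(L^3\vn{k_s}_2^2\bigr)^\sigma L^2\vn{k_s}_2^4
\]
for suitable $\sigma>0$ and arbitrary $\eta>0$. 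The oscillation term $12L^3\int_\gamma(k^4-\overline{k^4})^2\,ds$ is handled similarly: Poincar\'e on $k^4-\overline{k^4}$ turns it into a multiple of $L^5\int_\gamma k^6 k_s^2\,ds$, which after an integration by parts is bounded by $\vn{k_{ss}}_2$ times a weighted norm of $k$; splitting $\vn{k}_\infty^2 \le 2\vn{k-\k}_\infty^2+2\k^2$ and using \eqref{EQlinfkoscks} together with $\k L = 2\omega\pi$ then absorbs this into small multiples of $-80L^3\int_\gamma k^2 k_{ss}^2\,ds$ and $-88L^3\int_\gamma k^4 k_s^2\,ds$. Choosing $\eta$ and $\varepsilon_1$ small enough in terms of $\omega$ alone produces the pointwise inequality $\SQ'(t)\le -3L^2\vn{k_s}_2^4 - \varepsilon_2 L^3\vn{k_{s^3}}_2^2$.

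Finally, this pointwise estimate must be propagated to all of $[0,T)$ by a standard continuity/bootstrap argument: the set of times at which $\SQ(t)<16\omega^4\pi^4+\varepsilon_1$ is open, contains $0$, and on it $\SQ'\le 0$, so $\SQ(t)\le\SQ(0)<16\omega^4\pi^4+\varepsilon_1$, and the set is also closed in $[0,T)$, hence equals it. Integrating over $[t_1,t_2]$ yields \eqref{EQsql1}; convexity of $L^4$ follows from $(L^4)''=-4\SQ'\ge 0$; and the equality characterisation follows because equality in \eqref{EQsql1} forces $k_s\equiv 0$ at time $t_1$, whence $k$ is constant and $\gamma_{t_1}$ is a round $\omega$-circle. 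The main obstacle is clearly the interpolation/absorption step: each bad term must pick up a multiplicative factor controlled by $L^3\vn{k_s}_2^2<\varepsilon_1$ without losing essential $k$-powers, and the precise value of $\varepsilon_1$ in \eqref{EQseteps1} is presumably dictated by the worst constant arising in these inequalities.
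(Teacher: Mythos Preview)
Your proposal is correct and follows essentially the same route as the paper: start from the formula for $4\SQ'$ in Lemma~\ref{LMlengthevo}, observe that the hypothesis forces $L^3\vn{k_s}_2^2<\varepsilon_1$ via Jensen, apply Poincar\'e to $k^4-\overline{k^4}$ to convert the $L\vn{k}_8^8-\vn{k}_4^8$ term into $\int_\gamma k^6k_s^2\,ds$, absorb all bad pieces into the negative ones picking up powers of the small quantity $L^3\vn{k_s}_2^2$, and close with the continuity argument you describe.

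The only point worth flagging is that the absorption step is considerably more laborious than your sketch suggests. A single pass with $\vn{k}_\infty^2\le 2\vn{k-\k}_\infty^2+2\k^2$ on $\int_\gamma k^6k_s^2\,ds$ leaves a residual multiple of $L^3\int_\gamma k^4k_s^2\,ds$ whose coefficient is too large to be swallowed by the available $-88L^3\int_\gamma k^4k_s^2\,ds$. The paper handles this by iterating the decomposition $k^2=(k-\k)^2+2\k(k-\k)+\k^2$ several times, successively peeling off pieces that are absorbed into \emph{each} of $-88L^3\int_\gamma k^4k_s^2\,ds$, $-24L^2\vn{k_s}_2^2\vn{k}_4^4$, and $-80L^3\int_\gamma k^2k_{ss}^2\,ds$ (the last via $\int_\gamma k^2k_s^2\,ds\le \frac14\int_\gamma(k_{ss}k+2k_s^2)^2\,ds$), so that what remains is a list of ten terms each carrying an explicit factor of $\vn{k-\k}_\infty$ or $\vn{k_s}_2$. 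Only then are these estimated one by one against $L^3\vn{k_{s^3}}_2^2$ via iterated Poincar\'e, producing the function $R_0(\varepsilon_0)$ that defines $\varepsilon_1$ in \eqref{EQseteps1}. Your abstract Gagliardo--Nirenberg formulation is in principle equivalent, but carrying it out requires the same care with constants.
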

\begin{proof}
First, we estimate the last term on the right of \eqref{EQl4evo} by
\begin{align*}
	L\int_\gamma k^8 \,ds - \Big(\int_\gamma k^4\,ds\Big)^2
	&= L\int_\gamma k^4
		\bigg( k^4 - \frac1L\int_\gamma k^4\,ds\bigg)
		\,ds
\\
	&= L\int_\gamma
		\bigg( k^4 - \frac1L\int_\gamma k^4\,ds\bigg)^2
		\,ds
\\
	&\qquad + \int_\gamma k^4\,ds\int_\gamma
		\bigg( k^4 - \frac1L\int_\gamma k^4\,ds\bigg)
		\,ds
\\
	&= L\int_\gamma
		\bigg( k^4 - \frac1L\int_\gamma k^4\,ds\bigg)^2
		\,ds
\\
	&\le \frac{4L^3}{\omega^2\pi^2} \int_\gamma k_s^2k^6\, ds
\end{align*}
so that we have
\begin{equation}
\label{EQest1L4}
\begin{split}
-(L^4)''(t)
&\le
	- 8L^3 \int_\gamma
		k_{s^3}^2
		\,ds
	+ 60L^3 \int_\gamma
		k_s^4
		\,ds
	- 12L^2\vn{k_s}_2^4
\\&\qquad
	- 80L^3 \int_\gamma
		k_{ss}^2k^2
		\,ds
	- 88L^3 \int_\gamma
		k_s^2k^4
		\,ds
	- 24L^2\vn{k_s}_2^2\vn{k}_4^4
	+ 48\frac{L^5}{\omega^2\pi^2}
		\int_\gamma k_s^2k^6\, ds
\,.
\end{split}
\end{equation}
To deal with the last term on the right of \eqref{EQest1L4}, we split it up and
absorb into the three other terms on the right, with a small remaining piece
that will be absorbed into the first term on the right.

The first kind of estimate we use is the following:
\begin{equation}
\label{EQest25L4}
\begin{split}
	\frac{L^2}{\omega^2\pi^2}\int_\gamma k_s^2k^6\, ds
	&= \frac{L^2}{\omega^2\pi^2}\int_\gamma k_s^2k^4((k-\k)^2 + 2k\k - \k^2)\, ds
	\\
	&= \frac{L^2}{\omega^2\pi^2}\int_\gamma k_s^2k^4(k-\k)^2\, ds
	 + \frac{L^2}{\omega^2\pi^2}2\k\int_\gamma k_s^2k^4(k-\k+\k)\, ds
	 - \frac{L^2}{\omega^2\pi^2}\k^2\int_\gamma k_s^2k^4\, ds
	\\
	&= \frac{L^2}{\omega^2\pi^2}\int_\gamma k_s^2k^4(k-\k)^2\, ds
	 + \frac{L^2}{\omega^2\pi^2}2\k\int_\gamma k_s^2k^4(k-\k)\, ds
	 + \frac{L^2}{\omega^2\pi^2}\k^2\int_\gamma k_s^2k^4\, ds
	\\
	&= \frac{L^2}{\omega^2\pi^2}\int_\gamma k_s^2k^4(k-\k)^2\, ds
	 + 4\frac{L}{\omega\pi}\int_\gamma k_s^2k^4(k-\k)\, ds
	 + 4\int_\gamma k_s^2k^4\, ds
\,.
\end{split}
\end{equation}
Combining this with \eqref{EQest1L4} we find
\begin{equation}
\label{EQest2L4}
\begin{split}
-(L^4)''(t)
&\le
	- 8L^3 \int_\gamma
		k_{s^3}^2
		\,ds
	+ 60L^3 \int_\gamma
		k_s^4
		\,ds
	- 12L^2\vn{k_s}_2^4
\\&\qquad
	+  22\frac{L^5}{\omega^2\pi^2}\int_\gamma k_s^2k^4(k-\k)^2\, ds
	+ 88\frac{L^4}{\omega\pi}\int_\gamma k_s^2k^4(k-\k)\, ds
\\&\qquad
	- 80L^3 \int_\gamma
		k_{ss}^2k^2
		\,ds
	- 24L^2\vn{k_s}_2^2\vn{k}_4^4
	+ 26\frac{L^5}{\omega^2\pi^2}
		\int_\gamma k_s^2k^6\, ds
\,.
\end{split}
\end{equation}
The next step is to use the following equality (which continues from the last estimate in \eqref{EQest25L4})
\begin{equation}
\label{EQest3L4}
\begin{split}
	\frac{L^2}{\omega^2\pi^2}\int_\gamma k_s^2k^6\, ds
	&=
	  4\int_\gamma k_s^2k^4\, ds
	 + \frac{L^2}{\omega^2\pi^2}\int_\gamma k_s^2k^4(k-\k)^2\, ds
	 + 4\frac{L}{\omega\pi}\int_\gamma k_s^2k^4(k-\k)\, ds
\\
	&=
	  4\int_\gamma k_s^2(k^4-\ol{k^4}+\ol{k^4})\, ds
	 + \frac{L^2}{\omega^2\pi^2}\int_\gamma k_s^2k^4(k-\k)^2\, ds
	 + 4\frac{L}{\omega\pi}\int_\gamma k_s^2k^4(k-\k)\, ds
\\
	&=
	   \frac{4}{L}\vn{k}_4^4\vn{k_s}_2^2
\\&\qquad
	 + 4\int_\gamma k_s^2(k^4-\ol{k^4})\, ds
	 + \frac{L^2}{\omega^2\pi^2}\int_\gamma k_s^2k^4(k-\k)^2\, ds
	 + 4\frac{L}{\omega\pi}\int_\gamma k_s^2k^4(k-\k)\, ds
\,.
\end{split}
\end{equation}
Combining \eqref{EQest3L4} with \eqref{EQest2L4} yields
\begin{equation}
\label{EQest4L4}
\begin{split}
-(L^4)''(t)
&\le
	- 8L^3 \int_\gamma
		k_{s^3}^2
		\,ds
	+ 60L^3 \int_\gamma
		k_s^4
		\,ds
	- 12L^2\vn{k_s}_2^4
\\&\qquad
	+ 28\frac{L^5}{\omega^2\pi^2}\int_\gamma k_s^2k^4(k-\k)^2\, ds
	+ 122\frac{L^4}{\omega\pi}\int_\gamma k_s^2k^4(k-\k)\, ds
	+ 24L^3\int_\gamma k_s^2(k^4-\ol{k^4})\, ds
\\&\qquad
	- 80L^3 \int_\gamma
		k_{ss}^2k^2
		\,ds
	+ 20\frac{L^5}{\omega^2\pi^2}
		\int_\gamma k_s^2k^6\, ds
\,.
\end{split}
\end{equation}
We now perform our last manipulation of the term $\int_\gamma k_s^2k^6\, ds$:
\begin{align*}
	\frac{L^2}{\omega^2\pi^2}
		\int_\gamma k_s^2k^6\, ds
&=
	\frac{L^2}{\omega^2\pi^2}
		\int_\gamma k_s^2k^2((k-\k)^4 + 4k^3\k - 6k^2\k^2 + 4k\k^3 - \k^4)\, ds
\\
&=
	\frac{L^2}{\omega^2\pi^2}
		\int_\gamma k_s^2k^2(k-\k)^4\, ds
	+ \frac{L^2}{\omega^2\pi^2}
		\int_\gamma k_s^2k^2(4\k ((k-\k)^3 + 3k^2\k - 3k\k^2 + \k^3))\, ds
\\&\qquad
	+ \frac{L^2}{\omega^2\pi^2}
		\int_\gamma k_s^2k^2( - 6k^2\k^2 + 4k\k^3 - \k^4)\, ds
\\
&=
	\frac{L^2}{\omega^2\pi^2}
		\int_\gamma k_s^2k^2(k-\k)^4\, ds
	+ 8\frac{L}{\omega\pi}
		\int_\gamma k_s^2k^2(k-\k)^3\, ds
\\&\qquad
	+ \frac{L^2}{\omega^2\pi^2}
		\int_\gamma k_s^2k^2( 6\k^2((k-\kv)^2 + 2k\k - \k^2) - 8k\k^3 + 3\k^4)\, ds
\\
&=
	\frac{L^2}{\omega^2\pi^2}
		\int_\gamma k_s^2k^2(k-\k)^4\, ds
	+ 8\frac{L}{\omega\pi}
		\int_\gamma k_s^2k^2(k-\k)^3\, ds
	+ 24
		\int_\gamma k_s^2k^2(k-\kv)^2\, ds
\\&\qquad
	+ \frac{L^2}{\omega^2\pi^2}
		\int_\gamma k_s^2k^2( 4k\k^3 - 3\k^4)\, ds
\\
&=
	\frac{L^2}{\omega^2\pi^2}
		\int_\gamma k_s^2k^2(k-\k)^4\, ds
	+ 8\frac{L}{\omega\pi}
		\int_\gamma k_s^2k^2(k-\k)^3\, ds
	+ 24
		\int_\gamma k_s^2k^2(k-\kv)^2\, ds
\\&\qquad
	+ \frac{L^2}{\omega^2\pi^2}
		\int_\gamma k_s^2k^2( 4\k^3((k-\k) + \k) - 3\k^4)\, ds
\\
&=
	\frac{L^2}{\omega^2\pi^2}
		\int_\gamma k_s^2k^2(k-\k)^4\, ds
	+ 8\frac{L}{\omega\pi}
		\int_\gamma k_s^2k^2(k-\k)^3\, ds
	+ 24
		\int_\gamma k_s^2k^2(k-\kv)^2\, ds
\\&\qquad
	+ 32\frac{\omega\pi}{L}
		\int_\gamma k_s^2k^2(k-\k)\, ds
	+ 16\frac{\omega^2\pi^2}{L^2}
		\int_\gamma k_s^2k^2\, ds
\\
&\le
	\frac{L^2}{\omega^2\pi^2}
		\int_\gamma k_s^2k^2(k-\k)^4\, ds
	+ 8\frac{L}{\omega\pi}
		\int_\gamma k_s^2k^2(k-\k)^3\, ds
	+ 24
		\int_\gamma k_s^2k^2(k-\kv)^2\, ds
\\&\qquad
	+ 32\frac{\omega\pi}{L}
		\int_\gamma k_s^2k^2(k-\k)\, ds
	+ 4
		\int_\gamma (k_{ss}k + 2k_s^2)^2\,ds
\\
&\le
	\frac{L^2}{\omega^2\pi^2}
		\int_\gamma k_s^2k^2(k-\k)^4\, ds
	+ 8\frac{L}{\omega\pi}
		\int_\gamma k_s^2k^2(k-\k)^3\, ds
	+ 24
		\int_\gamma k_s^2k^2(k-\kv)^2\, ds
\\&\qquad
	+ 32\frac{\omega\pi}{L}
		\int_\gamma k_s^2k^2(k-\k)\, ds
	+ 4
		\int_\gamma k_{ss}^2k^2\,ds
	+ \frac{32}{3}
		\int_\gamma k_s^4\,ds
\,.
\end{align*}
Inserting this estimate into \eqref{EQest4L4} we find
\begin{equation}
\label{EQest5L4}
\begin{split}
-(L^4)''(t)
&\le
	- 8L^3 \int_\gamma
		k_{s^3}^2
		\,ds
	- 12L^2\vn{k_s}_2^4
	+ \frac{820}{3}L^3
		\int_\gamma k_s^4\,ds
\\&\qquad
	+ 24L^3\int_\gamma k_s^2(k^4-\ol{k^4})\, ds
	+ 28\frac{L^5}{\omega^2\pi^2}\int_\gamma k_s^2k^4(k-\k)^2\, ds
	+ 122\frac{L^4}{\omega\pi}\int_\gamma k_s^2k^4(k-\k)\, ds
\\&\qquad
	+ 20\frac{L^5}{\omega^2\pi^2}
		\int_\gamma k_s^2k^2(k-\k)^4\, ds
	+ 160\frac{L^4}{\omega\pi}
		\int_\gamma k_s^2k^2(k-\k)^3\, ds
	+ 480L^3
		\int_\gamma k_s^2k^2(k-\kv)^2\, ds
\\&\qquad
	+ 640L^2\omega\pi
		\int_\gamma k_s^2k^2(k-\k)\, ds
\,.
\end{split}
\end{equation}
Now our goal is to absorb terms three through to ten (on the RHS) into the first term.
Suppose that
\begin{equation}
\label{EQtopreserveL4}
	\SQ(t) = L^3\int_\gamma k_s^2 + k^4\,ds \le \SQ(0) = \varepsilon_0 + 16\pi^4\omega^4
\,.
\end{equation}
Since $L \le \frac{L^2}{4\omega^2\pi^2} \int_\gamma k^2\,ds \le \frac{L^2}{4\omega^2\pi^2} \sqrt{L}\bigg(\int_\gamma k^4\,ds\bigg)^\frac12$, we have $L^3\int_\gamma k^4\,ds \ge 16\pi^2\omega^4$ and so
\[
	L^3\int_\gamma k_s^2\,ds \le \varepsilon_0\,.
\]
We work only on an interval $[0,t_0)$ during which \eqref{EQtopreserveL4} holds.
Note that it holds at initial time; in fact, we will show $\SQ'(0) < 0$, so $t_0>0$.
Then we will have that on $[0,t_0)$, $\SQ'(t) < 0$ and so we may take $t_0=T$
and the proof is finished.

We go through each term from \eqref{EQest5L4} in turn.
\begin{equation}
\label{EQterm1L4}
	\begin{split}
	  \frac{820}{3}L^3
		\int_\gamma k_s^4\,ds
	&\le \frac{820}{3}\varepsilon_0 \vn{k_s}_\infty^2
	\\
	&\le \frac{820}{3}\varepsilon_0 \frac{L}{2\omega\pi}\vn{k_{ss}}_2^2
	 \le \varepsilon_0L^3 \frac{820}{24\omega^3\pi^3}\vn{k_{s^3}}_2^2
	\end{split}
\end{equation}
\begin{equation}
\label{EQterm2L4}
	\begin{split}
	  24L^3\int_\gamma k_s^2(k^4-\ol{k^4})\, ds
	&\le 24L^3\int_\gamma k_s^2\,ds\vn{k^4-\ol{k^4}}_\infty
	\\
	&\le 96L^3\int_\gamma k_s^2\,ds\int_\gamma |k_sk^3|\,ds
	\\
	&\le 96L^3\int_\gamma k_s^2\,ds\bigg(\int_\gamma k_s^4\,ds\bigg)^\frac14\bigg(\int_\gamma k^4\,ds\bigg)^\frac34
	\\
	&\le  L^3\int_\gamma k_s^4\,ds
	   + \frac{3\cdot 96^\frac43}{4(4)^\frac13}L^3\int_\gamma k^4\,ds\bigg(\int_\gamma k_s^2\,ds\bigg)^\frac43
	\\
	&\le  L^3\int_\gamma k_s^4\,ds
	   + \frac{3\cdot 96^\frac43}{4(4)^\frac13}\SQ(0)L^{-1}\varepsilon_0^{\frac13}\int_\gamma k_s^2\,ds
	\\
	&\le  L^3\int_\gamma k_s^4\,ds
	   + L^{3}\varepsilon_0^{\frac13}\frac{3\cdot 96^\frac43}{64\omega^4\pi^4(4)^\frac13}\SQ(0)\vn{k_{s^3}}_2^2
	\end{split}
\end{equation}
\begin{equation}
\label{EQterm3L4}
	\begin{split}
	  28\frac{L^5}{\omega^2\pi^2}\int_\gamma k_s^2k^4(k-\k)^2\, ds
	&\le
	  28\frac{L^5}{\omega^2\pi^2}\vn{k_s}_\infty^2\vn{k-\k}_\infty^2\int_\gamma k^4\, ds
	\\
	&\le
	  28\frac{L^5}{\omega^2\pi^2}\frac{L^3}{8\omega^3\pi^3}\vn{k_{s^3}}_2^2\frac{L}{2\omega\pi}\vn{k_s}_2^2L^{-3}\SQ(0)
	\\
	&\le
	  L^3\varepsilon_0 \frac{7}{4\omega^4\pi^4}\SQ(0)\vn{k_{s^3}}_2^2
	\end{split}
\end{equation}
\begin{equation}
\label{EQterm4L4}
	\begin{split}
	 122\frac{L^4}{\omega\pi}\int_\gamma k_s^2k^4(k-\k)\, ds
	&\le
	 122\frac{L^4}{\omega\pi}\vn{k_s}_\infty^2\vn{k-\k}_\infty\int_\gamma k^4\, ds
	\\
	&\le
	 122\frac{L^4}{\omega\pi}\frac{L^3}{8\omega^3\pi^3}\vn{k_{s^3}}_2^2\frac{L^\frac12}{(2\omega\pi)^\frac12}\vn{k_s}_2L^{-3}\SQ(0)
	\\
	&\le
	 L^3\varepsilon_0^\frac12 \frac{61}{2^\frac12(\omega\pi)^\frac52}\SQ(0)\vn{k_{s^3}}_2^2
	\end{split}
\end{equation}
\begin{equation}
\label{EQterm5L4}
	\begin{split}
	  20\frac{L^5}{\omega^2\pi^2}
		\int_\gamma k_s^2k^2(k-\k)^4\, ds
	&\le
	  20\frac{L^5}{\omega^2\pi^2}
		\vn{k-\k}_\infty^4
		L^\frac12\vn{k}_4^2
		\vn{k_s}_\infty^2
	\\
	&\le
	  20\frac{L^5}{\omega^2\pi^2}
		L^{-1}\sqrt{\SQ(0)}
		\frac{L^2}{4\omega^2\pi^2}\vn{k_s}_2^4
		\frac{L^3}{8\omega^3\pi^3}\vn{k_{s^3}}_2^2
	\\
	&\le
	  L^3\varepsilon_0^2 \frac{5}{8\omega^7\pi^7}
		\sqrt{\SQ(0)}
		\vn{k_{s^3}}_2^2
	\end{split}
\end{equation}
\begin{equation}
\label{EQterm6L4}
	\begin{split}
	  160\frac{L^4}{\omega\pi}
		\int_\gamma k_s^2k^2(k-\k)^3\, ds
	&\le  160\frac{L^4}{\omega\pi}
		L^{-1}\sqrt{\SQ(0)}
		\vn{k-\k}_\infty^3
		\frac{L^3}{8\omega^3\pi^3}\vn{k_{s^3}}_2^2
	\\
	&\le  160\frac{L^3}{\omega\pi}
		\sqrt{\SQ(0)}
		\frac{L^\frac32}{(2\omega\pi)^\frac32}\vn{k_s}_2^3
		\frac{L^3}{8\omega^3\pi^3}\vn{k_{s^3}}_2^2
	\\
		&\le  L^3\varepsilon_0^\frac32 \frac{10}{2^\frac12(\omega\pi)^\frac{11}2}
		\sqrt{\SQ(0)}
		\vn{k_{s^3}}_2^2
	\end{split}
\end{equation}
\begin{equation}
\label{EQterm7L4}
	\begin{split}
	  480L^3
		\int_\gamma k_s^2k^2(k-\kv)^2\, ds
	&\le 480L^3
		L^{-1}\sqrt{\SQ(0)}
		\frac{L}{2\omega\pi}\vn{k_s}_2^2
		\frac{L^3}{8\omega^3\pi^3}\vn{k_{s^3}}_2^2
	\\
	&\le L^3\varepsilon_0
		\sqrt{\SQ(0)}
		\frac{30}{\omega^4\pi^4}
		\vn{k_{s^3}}_2^2
	\end{split}
\end{equation}
\begin{equation}
\label{EQterm8L4}
	\begin{split}
	  640L^2\omega\pi
		\int_\gamma k_s^2k^2(k-\k)\, ds
	&\le  640L^2{\omega\pi}
		L^{-1}\sqrt{\SQ(0)}
		\vn{k-\k}_\infty
		\vn{k_s}_\infty^2
	\\
	&\le  640L^2\omega\pi
		L^{-1}\sqrt{\SQ(0)}
		\frac{L^\frac12}{(2\omega\pi)^\frac12}\vn{k_s}_2
		\frac{L^3}{8\omega^3\pi^3}\vn{k_{s^3}}_2^2
	\\
	&\le  640L^3
		\sqrt{\SQ(0)}
		\frac{L^{-\frac12}}{(2\omega\pi)^\frac12}L^{-\frac32}\varepsilon_0^\frac12
		\frac{L^2}{16\omega^3\pi^3}\vn{k_{s^3}}_2^2
	\\
	&\le  L^3\varepsilon_0^\frac12
		\frac{40}{2^\frac12(\omega\pi)^\frac72}
		\sqrt{\SQ(0)}
		\vn{k_{s^3}}_2^2
	\end{split}
\end{equation}
Combining estimates \eqref{EQterm1L4}--\eqref{EQterm8L4} with \eqref{EQest5L4} we find
\begin{equation}
\label{EQest6L4}
\begin{split}
-(L^4)''(t)
&\le
	- 12L^2\vn{k_s}_2^4
	+
	\bigg(-8
	+ \varepsilon_0\frac{205}{6\omega^3\pi^3}
	+
	  \varepsilon_0\frac{1}{8\omega^3\pi^3}
	+ \varepsilon_0^{\frac13}\frac{3\cdot 96^\frac43}{64\omega^4\pi^4(4)^\frac13}\SQ(0)
\\&\qquad
	+ \varepsilon_0 \frac{7}{4\omega^4\pi^4}\SQ(0)
	+ \varepsilon_0^\frac12 \frac{61}{2^\frac12(\omega\pi)^\frac52}\SQ(0)
	+ \varepsilon_0^2 \frac{5}{8\omega^7\pi^7} \sqrt{\SQ(0)}
	+ \varepsilon_0^\frac32 \frac{10}{2^\frac12(\omega\pi)^\frac{11}2}\sqrt{\SQ(0)}
\\&\qquad
	+ \varepsilon_0\sqrt{\SQ(0)}\frac{30}{\omega^4\pi^4}
	+ \varepsilon_0^\frac12\frac{40}{2^\frac12(\omega\pi)^\frac72}\sqrt{\SQ(0)}
	\bigg)L^3 \int_\gamma
		k_{s^3}^2
		\,ds
\,.
\end{split}
\end{equation}
Since $\SQ(0) \le \varepsilon_0 + 16\pi^4\omega^4$, setting
\begin{align*}
	R_0(\varepsilon_0) &=
	  \varepsilon_0\frac{205}{6\omega^3\pi^3}
	+
	  \varepsilon_0\frac{1}{8\omega^3\pi^3}
	+ \varepsilon_0^{\frac13}\frac{3\cdot 96^\frac43}{64\omega^4\pi^4(4)^\frac13}(\varepsilon_0 + 16\pi^4\omega^4)
\\&\qquad
	+ \varepsilon_0 \frac{7}{4\omega^4\pi^4}(\varepsilon_0 + 16\pi^4\omega^4)
	+ \varepsilon_0^\frac12 \frac{61}{2^\frac12(\omega\pi)^\frac52}(\varepsilon_0 + 16\pi^4\omega^4)
	+ \varepsilon_0^2 \frac{5}{8\omega^7\pi^7} \sqrt{(\varepsilon_0 + 16\pi^4\omega^4)}
\\&\qquad
	+ \varepsilon_0^\frac32 \frac{10}{2^\frac12(\omega\pi)^\frac{11}2}\sqrt{(\varepsilon_0 + 16\pi^4\omega^4)}
	+ \varepsilon_0\sqrt{(\varepsilon_0 + 16\pi^4\omega^4)}\frac{30}{\omega^4\pi^4}
	+ \varepsilon_0^\frac12\frac{40}{2^\frac12(\omega\pi)^\frac72}\sqrt{(\varepsilon_0 + 16\pi^4\omega^4)}
\end{align*}
we use \eqref{EQest6L4} to find
\begin{equation}
\label{EQest7L4}
-(L^4)''(t)
\le
	- 12L^2\vn{k_s}_2^4
	+ (-8 + R_0(\varepsilon_0))L^3 \int_\gamma
		k_{s^3}^2
		\,ds
\,.
\end{equation}
Clearly $R_0$ tends monotonically to zero as $\varepsilon_0$ goes to zero, and so there exists an $\varepsilon_1>0$, depending only on $\omega$ (which is a universal constant) such that
\begin{equation}
\label{EQseteps1}
	R_0(\varepsilon_0) < 8\qquad\text{for all}\quad \varepsilon_0 < \varepsilon_1
\,.
\end{equation}
Now we finish our estimate.
Assuming the initial condition
\[
	\SQ(0) < 16\pi^4\omega^4 + \varepsilon_1\,,
\]
we see that $\SQ'(0) < 0$ and in fact $\SQ'(0) = 0$ if and only if $\gamma_0$
is a multiply-covered circle, in which case we have in fact $\SQ(0) =
16\pi^4\omega^4 = \SQ(t)$ for all $t\in[0,T)$.

So, as $\SQ'(0) < 0$, smoothness of the solution implies that there exists a maximal $t_0$ such that $\SQ(t) \le \SQ(0)$ for all $t\in[0,t_0)$.
In fact, we must have $t_0 = T$ since, if not, we still have $\SQ(t_0) \le \SQ(0)$ by taking a limit (as $t_0 < T$ this limit exists).
Then applying again the estimate \eqref{EQest7L4} yields $\SQ'(t_0) < 0$, or $\gamma_{t_0}$ is again a standard round $\omega$-circle.
Either case is a contradiction with the maximality of $t_0$, and so we must have $t_0=T$.
\end{proof}

A landmark observation in the field is that given by the interpolation method
of Dziuk-Kuwert-Sch\"atzle \cite{DKS2002}.
As commented in \cite[p. 1236]{DKS2002} this applies for flows of curves that
have up to a fifth power of curvature in the speed (but not including the fifth
power).
Here, our flow has a third power of curvature (as does the elastic flow
considered in \cite{DKS2002}) and so we may use the same techniques as in
\cite{DKS2002} to obtain the following result.
Note that we always have $T<\infty$ in the compact case of Chen's flow (recall
the estimate \eqref{FT}) so we do not require this additional hypothesis here.

\begin{thm}\label{TMblowup}
Suppose $\gamma:I\times[0,T)\rightarrow\R^2$ is a compact Chen flow.
Then
\begin{equation}
\label{EQblowupest}
	\int_\gamma k^2\,ds \ge c_B(T-t)^{-\frac14}\,,
\end{equation}
where $c_B$ is a constant depending only on $\omega$.

If $\gamma:I\times[0,T)\rightarrow\R^2$ is a cocompact Chen flow, then
\eqref{EQblowupest} holds only when the maximal time of smooth existence $T$ is
finite.
\end{thm}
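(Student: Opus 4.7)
The plan is to derive a differential inequality of the form $\phi'(t) \le C_\omega\, \phi(t)^5$ for $\phi(t) := \int_\gamma k^2\,ds$ and then integrate it. The exponent $5$ is dictated by scaling: under a rescaling $\gamma \mapsto \lambda\gamma$, arclength derivatives of $k$ scale homogeneously, so $\phi$ has dimension $L^{-1}$ and the fourth-order flow endows $t$ with dimension $L^4$, making $\phi^{-4}(t)$ linear in $T-t$ the natural rate, as conjectured.

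For the differential inequality, I would start from Lemma~\ref{LMl2kevo},
\[
\phi'(t) = -2\int_\gamma k_{ss}^2\,ds - 3\int_\gamma k_s^2 k^2\,ds + \int_\gamma k^6\,ds,
\]
and dispose of the only troublesome term $\int_\gamma k^6\,ds$. Splitting $k = \bar k + (k-\bar k)$ with $\bar k = 2\pi\omega/L$ as in the proof of Lemma~\ref{LMkinl2}, and employing the Poincar\'e--Wirtinger inequality $\|k-\bar k\|_\infty^2 \le (L/2\pi)\|k_s\|_2^2$ together with the Gagliardo--Nirenberg-type bound $\|k_s\|_2^2 \le \varepsilon\|k_{ss}\|_2^2 + C(\varepsilon)L^{-2}\|k\|_2^2$, one obtains after a routine but tedious Peter--Paul calculation
\[
\int_\gamma k^6\,ds \;\le\; \|k_{ss}\|_2^2 + C_\omega\, \phi(t)^5,
\]
where the appearance of $\omega$ in $C_\omega$ is tracked through the factors of $L$ that arise, eliminated using $L\bar k^2 = (2\pi\omega)^2/L$ and the monotonicity $L\le L_0$. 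This is precisely the scheme of \cite{DKS2002}, applicable here because the highest power of $k$ in the speed (namely $k^3$) is strictly less than the fifth power threshold noted in \cite[p.~1236]{DKS2002}. Absorbing $\|k_{ss}\|_2^2$ into the leading good term and discarding the negative $k_s^2 k^2$ term yields $\phi'(t) \le C_\omega \phi(t)^5$.

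Next I would confirm $\phi(t) \to \infty$ as $t \nearrow T$. In the compact case, $T$ is finite by \eqref{FT}; if $\phi$ were bounded on $[0,T)$ then the same DKS interpolation machinery, applied to $\int k_{s^m}^2\,ds$ for $m \ge 1$, bootstraps to bound every $\|k_{s^m}\|_2$ on $[0,T)$, hence every $C^m$-norm of $\gamma$, contradicting the characterisation of the maximal existence time. In the cocompact case the identical bootstrap applies, but only under the stated hypothesis $T<\infty$, since otherwise $\phi$ need not blow up. Finally, integrating $(\phi^{-4})' \ge -4 C_\omega$ from $t$ to $T$ and using $\phi^{-4}(T) = 0$ yields $\phi^{-4}(t) \le 4C_\omega(T-t)$, that is,
\[
\int_\gamma k^2\,ds \;\ge\; c_B\,(T-t)^{-1/4}, \qquad c_B := (4C_\omega)^{-1/4}.
\]

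The main obstacle is the careful tracking of $\omega$-dependent constants in the interpolation for $\int k^6\,ds$: one must balance the powers of $L$ and $\bar k$ that arise so that the estimate closes into the scale-consistent $\phi^5$ with a constant depending only on $\omega$ (and not on $L(t)$ or $L_0$, which would degenerate as $L \to 0$). The scale-invariant rearrangement $L\phi$ is the right bookkeeping device, since $L\int k^6\,ds$ and $(L\phi)^5 / L^4$ have matching dimensions, after which the $L$-factors cancel against the lower bound $L\phi \ge 4\pi^2\omega^2$ supplied by Jensen.
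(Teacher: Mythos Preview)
Your approach is exactly the one the paper invokes: the paper does not give a detailed proof of this theorem but simply cites the interpolation method of Dziuk--Kuwert--Sch\"atzle \cite{DKS2002}, noting that Chen's flow falls below the fifth-power threshold in the speed so that their machinery applies verbatim. Your sketch of deriving $\phi' \le C_\omega \phi^5$ from Lemma~\ref{LMl2kevo} via interpolation on $\int k^6\,ds$, then bootstrapping to force $\phi \to \infty$ at $T$, then integrating, is precisely that method.

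One wobble worth flagging: in the middle of your sketch you write that the $L$-factors are ``eliminated using $L\bar k^2 = (2\pi\omega)^2/L$ and the monotonicity $L\le L_0$.'' If $L_0$ genuinely entered the constant, $c_B$ would depend on the initial data and not just on $\omega$, which would break the later uses of this theorem (e.g.\ Lemma~\ref{LMrlb}, where $c_B$ must be universal for the rescaled length bounds to be uniform). Your final paragraph has the correct mechanism: the stray negative powers of $L$ are absorbed using the scale-invariant lower bound $L\phi \ge 4\pi^2\omega^2$ from Jensen (equivalently, $L^{-1} \le \phi/(4\pi^2\omega^2)$), which is what makes $C_\omega$ depend only on $\omega$. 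Drop the reference to $L\le L_0$ and rely solely on this Fenchel/Jensen bound; with that correction the argument closes exactly as you describe. In the cocompact case $\bar k = 0$ and the same interpolation goes through with a universal constant, so the statement that \eqref{EQblowupest} holds there under the hypothesis $T<\infty$ is handled identically.
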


We are now in a position to prove that the flow contracts to a point.

\begin{thm}\label{TMfinalpoint}
Suppose $\gamma:I\times[0,T)\rightarrow\R^2$ is a compact Chen flow with initial data $\gamma_0:I\rightarrow\R^2$ being an immersed $\omega$-circle satisfying
\begin{equation}
\label{EQhypothm}
	\SQ(0) = L^3\int_\gamma k_s^2 + k^4\,ds\bigg|_{t=0} < 16\omega^4\pi^4 + \varepsilon_1
\end{equation}
where $\varepsilon_1$ is the universal constant depending only on $\omega$ from \eqref{EQseteps1}.

Then $L(t) \searrow 0$ as $t\nearrow T$, and $\gamma(I,t)\rightarrow\{\SO\}$, where $\SO\in\R^2$, in the Hausdorff metric on subsets of $\R^2$.
\end{thm}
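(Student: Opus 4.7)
The plan is to establish three facts in sequence: (i) the length $L(t)$ decreases to zero, (ii) the diameter of $\gamma(I,t)$ vanishes in the limit, and (iii) the image curves converge in the Hausdorff metric to a single point $\SO\in\R^2$.

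For (i), Proposition \ref{PRl4convex} gives $\SQ(t)\le\SQ(0)<16\omega^4\pi^4+\varepsilon_1$ throughout $[0,T)$. Combining this with Hölder's inequality yields $\int k^2\,ds\le L^{-1}\sqrt{\SQ(0)}$ and $\int k_s^2\,ds\le L^{-3}\SQ(0)$. Suppose for contradiction that $L(t)\ge L_\infty>0$ for all $t\in[0,T)$; then both these integrals are uniformly bounded, so Lemma \ref{LMkinl2} gives $\int k^2\,ds\le\int k^2\,ds\big|_{t=0}\,e^{RT}<\infty$. This contradicts Theorem \ref{TMblowup}, which forces $\int k^2\,ds\ge c_B(T-t)^{-1/4}\nearrow\infty$. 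Hence $L(t)\to 0$, and since $L'=-\int(k_s^2+k^4)\,ds\le 0$, we have $L(t)\searrow 0$. For (ii), any closed curve of length $L$ in $\R^2$ has diameter at most $L/2$, since any two points split the curve into arcs with lengths summing to $L$. Thus $\mathrm{diam}\,\gamma(I,t)\le L(t)/2\to 0$.

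For (iii), fix a reference parameter $u_0\in I$ and set $\SO(t)=\gamma(u_0,t)$. We prove that $\SO(t)$ is Cauchy as $t\nearrow T$; combined with $\mathrm{diam}\,\gamma(I,t)\to 0$, this yields the Hausdorff convergence $\gamma(I,t)\to\{\SO\}$. The basic estimate is
\[
|\SO(t_2)-\SO(t_1)|\le\int_{t_1}^{t_2}|\gamma_t(u_0,\tau)|\,d\tau\le\int_{t_1}^{t_2}\vn{\gamma_{s^4}(\cdot,\tau)}_\infty\,d\tau,
\]
and the task is to show the right-hand side vanishes as $t_1\to T$. Applying Hölder's inequality twice to $\int k\,ds=2\omega\pi$ gives the universal lower bound $\SQ(t)\ge 16\omega^4\pi^4$, hence $(L^4)'(t)\le -64\omega^4\pi^4$, which integrated yields $L(t)\ge c_1(T-t)^{1/4}$. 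Together with $\SQ\le\SQ(0)$, the estimate \eqref{EQsql1}, and Sobolev--Poincaré interpolation on the closed curve, this lets us derive the type~I pointwise estimates $\sup|k|\lesssim L^{-1}$, $\sup|k_s|\lesssim L^{-2}$, and $\sup|k_{ss}|\lesssim L^{-3}$. Using $\gamma_{s^4}=(k_{ss}-k^3)\nu-3kk_s\tau$, these combine to $\vn{\gamma_{s^4}}_\infty\lesssim L^{-3}\lesssim(T-t)^{-3/4}$, which is integrable up to $t=T$. Hence $\SO(t)$ is Cauchy, has a limit $\SO\in\R^2$, and the theorem follows.

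The main obstacle lies in step (iii), specifically in upgrading the integrated-in-time bound $\int_0^T L^3\vn{k_{s^3}}_2^2\,dt<\infty$ from \eqref{EQsql1} to a pointwise-in-time estimate for $\vn{k_{ss}}_2$ sharp enough to give $\sup|k_{ss}|\lesssim L^{-3}$. This is where one must invoke DKS-style interpolation combined with the evolution equations for the higher-order curvature energies $\vn{\partial_s^j k}_2^2$, in the spirit of \cite{DKS2002}. Some slack is available: any exponent $(T-t)^{-\alpha}$ with $\alpha<1$ for $\vn{\gamma_{s^4}}_\infty$ would already be integrable and suffice to close the Cauchy argument.
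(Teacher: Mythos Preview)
Your steps (i) and (ii) are correct and coincide with the paper's argument. The paper's own proof of this theorem is in fact very terse at step (iii): having shown $L(t)\searrow 0$, it simply asserts that the images converge in the Hausdorff metric to a point, and the rigorous Cauchy-type estimate is carried out only later, in Proposition~\ref{PNrgbdd}. You are right to want more here, and your idea of showing $\int^T|\gamma_t|\,dt<\infty$ is exactly what is needed --- it is precisely the route taken in Proposition~\ref{PNrgbdd}.

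However, your implementation has a genuine gap. The pointwise-in-time bounds $\sup|k_s|\lesssim L^{-2}$ and $\sup|k_{ss}|\lesssim L^{-3}$ are \emph{not} available from $\SQ(t)\le\SQ(0)$ and \eqref{EQsql1} alone: these give only $L^3\vn{k_s}_2^2\le\SQ(0)$ and the time-integrated control $\int_0^T L^3\vn{k_{s^3}}_2^2\,dt<\infty$, neither of which bounds $\vn{k_{ss}}_2$ at a fixed time. You acknowledge this obstacle but do not resolve it; invoking ``DKS-style interpolation'' does not help directly, since the DKS machinery yields uniform bounds on $\vn{\rk_{\rs^m}}_2$ only for the \emph{rescaled} flow (Theorem~\ref{TMrallest}), which already presupposes the final point $\SO$.

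The fix is simpler than you suggest and avoids pointwise-in-time control of higher derivatives altogether. For the worst term, apply Cauchy--Schwarz in the time variable:
\[
\int_{t_1}^{T}|k_{ss}|\,dt
\le \int_{t_1}^{T} L^{1/2}\vn{k_{s^3}}_2\,dt
\le \bigg(\int_{t_1}^{T} L^{-2}\,dt\bigg)^{1/2}\bigg(\int_{t_1}^{T} L^3\vn{k_{s^3}}_2^2\,dt\bigg)^{1/2}.
\]
Your lower bound $L(t)\ge c_1(T-t)^{1/4}$ gives $\int_{t_1}^T L^{-2}\,dt\lesssim(T-t_1)^{1/2}$, and the second factor is finite by \eqref{EQsql1}. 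The terms $|k|^3$ and $|k||k_s|$ are handled by expanding $k=(k-\k)+\k$: the pointwise bound $|k|\lesssim L^{-1}$ \emph{is} available (from $\vn{k-\k}_\infty\le L^{1/2}\vn{k_s}_2$ together with $L^3\vn{k_s}_2^2\le\SQ(0)$), and one uses $\int_{t_1}^T L^{-3}\,dt\lesssim(T-t_1)^{1/4}$. This is exactly how Proposition~\ref{PNrgbdd} proceeds.
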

\begin{proof}
We have that $L'(t) \le 0$ by Lemma \ref{LMlengthevo}.
We assume for the purpose of contradiction that $L(t)\rightarrow C$ where $C>0$. Then $L(t) \ge C$ for all $t\in[0,T_0]$, where $T_0<T$.
The hypothesis \eqref{EQhypothm} implies that
\[
	L^3\int_\gamma k_s^2\, ds \le \SQ(0)
\]
for all $t\in[0,T_0]$.
Since $L(t) \ge C$ this implies
\[
	\int_\gamma k_s^2\, ds \le \SQ(0)C^{-3} =: D
\,.
\]
Therefore we can apply Lemma \ref{LMkinl2} to obtain
\begin{equation}
\label{EQcontra}
	\int_\gamma k^2\,ds
	 \le
 	\int_\gamma k^2\, ds\bigg|_{t=0}
	e^{R(C,D,\omega)T}
	\le C_0
\end{equation}
where $C_0$ depends only on $\omega$, $C$ and $D$.
Note that there is no dependence on $T_0$.

The estimate from Theorem \ref{TMblowup} implies
\[
	\int_\gamma k^2\,ds \ge c_B(T-t)^{-\frac14}
	\ge 2C_0\,,
\]
for $t\in[T-\frac{c_B^4}{16C_0^4},T)$.
This is a contradiction with \eqref{EQcontra}.

Therefore $L(t)\searrow0$ as $t\nearrow T$.
Since $\gamma:I\times[0,T)\rightarrow\R^2$ is a smooth family of immersed
$\omega$-circles, this implies that the images $\gamma(I,t)$ converge in the
Hausdorff metric on subsets of $\R^2$ to a point.
\end{proof}

\section{The cocompact case}
\label{sect:cocompact}

In the cocompact case, a much simpler estimate holds for the functional $\SR(t) = L\vn{k}_2^2$.
There is no need for a rescaling, and we enjoy uniform length estimates for the original flow.
In order to illustrate these benefits, let us quickly prove Theorem
\ref{TMcoco2} from the introduction.

\begin{lem}
\label{LMcocolength}
Let $\gamma:\R\times[0,T)\rightarrow\R^2$ be a cocompact Chen flow with initial
data $\gamma_0:I\rightarrow\R^2$ being an immersed interval.
Then
\[
1 \le L(t) \le L(0)\,.
\]
\end{lem}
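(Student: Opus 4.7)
The plan is to establish the two inequalities separately: the upper bound follows immediately from the length decay identity, and the lower bound follows from preservation of the cocompactness condition under the flow combined with the triangle-inequality bound that arclength dominates chord length.

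For the upper bound, I would simply invoke Lemma \ref{LMlengthevo}, which gives
\[
L'(t) = -\int_\gamma k_s^2 + k^4\,ds \le 0\,,
\]
so $L$ is non-increasing and $L(t) \le L(0)$.

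For the lower bound, the first step is to verify that the cocompactness condition \eqref{CoComp} is preserved by the flow. Chen's flow $\partial_t\gamma = -\partial_s^4\gamma$ is translation-invariant and depends only on intrinsic quantities, so if $\gamma(\cdot,0)$ satisfies $\gamma_0(m+u) = me_1 + \gamma_0(u)$, then both sides $t \mapsto \gamma(m+u,t)$ and $t \mapsto me_1 + \gamma(u,t)$ solve the same Chen flow equation with identical initial data; uniqueness of the short-time solution (and iteration up to $T$) gives
\[
\gamma(m+u,t) = me_1 + \gamma(u,t)\qquad\text{for all } m\in\Z,\ u\in\R,\ t\in[0,T)\,.
\]
With the convention that the period $L(t)$ refers to the length over one fundamental domain, say $u\in[0,1]$, the curve from $\gamma(0,t)$ to $\gamma(1,t) = e_1 + \gamma(0,t)$ has endpoints at Euclidean distance $|e_1| = 1$. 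Since arclength dominates chord length,
\[
L(t) = \int_0^1 |\gamma_u(u,t)|\,du \ge \bigl|\gamma(1,t) - \gamma(0,t)\bigr| = 1\,.
\]

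There is no real obstacle here; the only thing to check carefully is the preservation of cocompactness and the convention on the fundamental domain used to define $L(t)$.
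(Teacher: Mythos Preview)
Your proof is correct and follows essentially the same approach as the paper: the upper bound from $L' = -\int_\gamma k_s^2 + k^4\,ds \le 0$, and the lower bound from cocompactness. You supply more detail than the paper does---namely, an explicit argument for preservation of the cocompactness condition (which the paper's local existence theorem already guarantees) and the arclength-versus-chord inequality---but the underlying ideas are identical.
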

\begin{proof}
While the flow exists, the cocompactness condition implies $L(t) \ge 1$ (with equality if and only if $\gamma$ is a straight line).
The evolution of $L$ implies
\[
L' = -\int_\gamma k_s^2 + k^4\,ds \le 0
\]
which implies that the length is monotone decreasing, that is, the second estimate.
\end{proof}

\begin{prop}
\label{PNcocosr}
Let $\gamma:\R\times[0,T)\rightarrow\R^2$ be a cocompact Chen flow with initial
data $\gamma_0:I\rightarrow\R^2$ being an immersed interval satisfying $\omega = 0$.

Suppose in addition that
\begin{equation}
\label{EQcoco1}
\SR(0) = L\int_{\gamma} k^2\,ds\bigg|_{t=0} \le \frac34\,.
\end{equation}
Then for all $t\in[0,T)$,
\[
\SR(t) \le \SR(0)e^{-2L_0^{-4}t}\,.
\]
\end{prop}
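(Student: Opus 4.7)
The plan is to derive a differential inequality of the form $\SR'(t) \le -2L_0^{-4}\SR(t)$ from the evolution equations; integration then yields the claimed exponential decay immediately. Combining Lemma \ref{LMlengthevo} with Lemma \ref{LMl2kevo} gives
\begin{equation*}
\SR'(t) = -\bigl(\vn{k_s}_2^2 + \vn{k}_4^4\bigr)\vn{k}_2^2 \;-\; 2L\vn{k_{ss}}_2^2 \;-\; 3L\int_\gamma k_s^2k^2\,ds \;+\; L\vn{k}_6^6.
\end{equation*}
All but the last term are manifestly non-positive, so the whole argument reduces to absorbing the bad term $L\vn{k}_6^6$ into the good term $-2L\vn{k_{ss}}_2^2$ using the scale-invariant smallness of $\SR$.

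The key ingredient is that $\omega=0$ together with cocompactness makes $k$ periodic with vanishing mean over each fundamental domain of length $L$. Wirtinger's inequality therefore yields $\vn{k}_2^2 \le \frac{L^2}{4\pi^2}\vn{k_s}_2^2$ and $\vn{k_s}_2^2 \le \frac{L^2}{4\pi^2}\vn{k_{ss}}_2^2$, while the existence of a zero of $k$ gives the Agmon-type estimate $\vn{k}_\infty^2 \le 2\vn{k}_2\vn{k_s}_2$. Combining these with $\vn{k}_6^6 \le \vn{k}_\infty^4\vn{k}_2^2$ and $\vn{k}_2^2 = \SR/L$, I obtain an estimate of the shape
\begin{equation*}
L\vn{k}_6^6 \;\le\; C\,\SR^2\,L\vn{k_{ss}}_2^2
\end{equation*}
for an explicit universal constant $C$. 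A standard bootstrap now shows that the initial smallness $\SR(0) \le \frac{3}{4}$ persists: set $t^* = \sup\{t \in [0,T) : \SR(\tau) \le \tfrac{3}{4} \text{ on } [0,t]\}$, and note that on $[0,t^*)$ the displayed estimate forces $\SR'(\tau) \le 0$, so $\SR$ cannot cross the threshold and $t^* = T$. This requires only checking that $C\cdot(3/4)^2$ is below the constant $2$ coming from the $-2L\vn{k_{ss}}_2^2$ term, which is where the specific threshold $3/4$ is used.

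With the bad term absorbed, we have $\SR'(t) \le -L\vn{k_{ss}}_2^2$ on $[0,T)$. Chaining the two Wirtinger inequalities gives $\vn{k_{ss}}_2^2 \ge \frac{16\pi^4}{L^4}\vn{k}_2^2$, hence $L\vn{k_{ss}}_2^2 \ge \frac{16\pi^4}{L^4}\SR$, which is bounded below by $\frac{2}{L_0^4}\SR$ (with plenty of room, since $L \le L_0$ by Lemma \ref{LMcocolength} and $16\pi^4 \gg 2$). Integrating $\SR' \le -2L_0^{-4}\SR$ yields the stated inequality. The main obstacle is the interpolation-plus-bootstrap step in the preceding paragraph: all of Wirtinger, Agmon, and the evolution equations have explicit constants, so the calculation is routine, but one must be careful to track constants to ensure the smallness hypothesis is genuinely propagated rather than merely preserved instantaneously at $t=0$.
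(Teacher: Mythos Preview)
Your argument is correct, but the absorption step differs from the paper's in a way worth noting. The paper absorbs the bad term $\int_\gamma k^6\,ds$ into the \emph{mixed} good term $-3\int_\gamma k_s^2k^2\,ds$ rather than into $-2L\vn{k_{ss}}_2^2$: from $\vn{k^2}_\infty \le 2L^{1/2}\bigl(\int_\gamma k_s^2k^2\,ds\bigr)^{1/2}$ one gets $\int_\gamma k^6\,ds \le 4L\vn{k}_2^2\int_\gamma k_s^2k^2\,ds = 4\SR\int_\gamma k_s^2k^2\,ds$, so the condition $4\SR \le 3$ is exactly what makes the coefficient $(4\SR-3)$ non-positive. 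This explains precisely where the threshold $\tfrac34$ comes from. Your route via Agmon and Wirtinger produces $L\vn{k}_6^6 \le \pi^{-2}\SR^2\,L\vn{k_{ss}}_2^2$, which succeeds with enormous slack at $\SR=\tfrac34$ (indeed it would work up to roughly $\SR<\pi\sqrt{2}$), so your proof is valid but does not illuminate the specific constant in the hypothesis. Similarly, the paper uses the crude Poincar\'e inequality $\vn{k}_2^2 \le L^4\vn{k_{ss}}_2^2$ (without the $16\pi^4$) to land exactly on the decay rate $2L_0^{-4}$, whereas you obtain the stronger rate $16\pi^4 L_0^{-4}$ and then discard the excess. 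Both arguments are sound; the paper's is tuned so that each inequality is used at its natural strength for the stated constants.
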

\begin{proof}
Since
\[
\int_\gamma k^6\,ds \le \vn{k}_\infty^4\vn{k}_2^2
\]
and (note that $\omega = \k = 0$ here)
\[
\vn{k^2}_\infty \le 2\int_\gamma |k_sk|\,ds
 \le 2L^\frac12 \bigg(\int_\gamma k^2k_s^2\,ds\bigg)^\frac12
\,,
\]
we have
\[
\int_\gamma k^6\,ds \le 4L\vn{k}_2^2\,\int_\gamma k^2k_s^2\,ds
\]
This implies
\begin{align*}
(\vn{k}_2^2)'
 &= -2\int_\gamma kk_{s^4}\,ds
 + \int_\gamma 4k^3k_{ss} + 12k_s^2k^2 + 2k^6 + k^3k_{ss} - k^6\,ds
\\
 &= -2\vn{k_{ss}}_2^2
 + \int_\gamma 5k^3k_{ss} + 12k_s^2k^2 + k^6\,ds
\\
 &= -2\vn{k_{ss}}_2^2
 - 3\int_\gamma k^2k_s^2\,ds
 + \int_\gamma k^6\,ds
\\
 &\le -2\vn{k_{ss}}_2^2
 + (4L\vn{k}_2^2 - 3)\int_\gamma k^2k_s^2\,ds
\\
 &\le -2\vn{k_{ss}}_2^2
\,,
\end{align*}
using the hypothesis \eqref{EQcoco1}.
Applying the Poincar\'e inequality, we find
\[
\vn{k}_2^2 \le L^2\vn{k_s}_2^2 \le L^4\vn{k_{ss}}_2^2
\]
which implies
\[
\SR' \le -2L^{-3}\vn{k}_2^2 = -2L^{-4}\SR
     \le -2L_0^{-4}\SR
\]
or
\[
\SR(t) \le \SR(0)e^{-2L_0^{-4}t}
\]
as required.
\end{proof}

Now this combines with Theorem \ref{TMblowup} to give long time existence.

\begin{thm}
Let $\gamma:\R\times[0,T)\rightarrow\R^2$ be a cocompact Chen flow with initial
data $\gamma_0:I\rightarrow\R^2$ being an immersed interval satisfying $\omega = 0$ and \eqref{EQcoco1}.

Then the maximal time of exsitence is infinite ($T=\infty$).
\end{thm}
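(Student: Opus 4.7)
The plan is to argue by contradiction, assuming $T < \infty$ and leveraging the three preceding results: Lemma \ref{LMcocolength}, Proposition \ref{PNcocosr}, and Theorem \ref{TMblowup}.

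First, suppose for the sake of contradiction that $T < \infty$. Since $\gamma$ is a cocompact Chen flow with finite maximal existence time, Theorem \ref{TMblowup} applies to give
\[
\int_\gamma k^2\,ds \ge c_B (T-t)^{-\frac14}
\]
for some constant $c_B = c_B(\omega)$, so in particular $\int_\gamma k^2\,ds \to \infty$ as $t \nearrow T$.

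On the other hand, the hypotheses $\omega = 0$ and $\SR(0) \le \frac34$ let us apply Proposition \ref{PNcocosr}, which yields
\[
\SR(t) = L(t)\int_\gamma k^2\,ds \le \SR(0) e^{-2L_0^{-4} t} \le \tfrac34
\]
for all $t\in[0,T)$. Combining this with Lemma \ref{LMcocolength}, which gives the uniform lower bound $L(t) \ge 1$, we obtain the uniform upper bound
\[
\int_\gamma k^2\,ds \le \frac{\SR(t)}{L(t)} \le \SR(0) \le \tfrac34
\]
on the entire interval $[0,T)$. This directly contradicts the blow-up estimate of Theorem \ref{TMblowup}, forcing $T = \infty$.

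The entire argument is short and essentially routine given what has been assembled; no new calculation is needed. The only point worth a moment's care is that Theorem \ref{TMblowup} for the cocompact case is stated conditionally on $T < \infty$, so the contradiction setup correctly triggers its hypothesis. There is no real obstacle here since the scale-invariant quantity $\SR$ decays exponentially while the length stays bounded below by $1$, so $\|k\|_2^2$ cannot possibly blow up in finite time.
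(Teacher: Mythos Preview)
Your proof is correct and follows exactly the same approach as the paper: assume $T<\infty$, invoke Theorem~\ref{TMblowup} to get blowup of $\vn{k}_2^2$, and contradict this using the length lower bound from Lemma~\ref{LMcocolength} together with the decay of $\SR$ from Proposition~\ref{PNcocosr}. Your version is slightly more detailed but otherwise identical.
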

\begin{proof}
If $T<\infty$ then Theorem \ref{TMblowup} implies that $\vn{k}_2^2$ is unbounded as a function of time.
However the lower bound for length (Lemma \ref{LMcocolength}) and the decay estimate for $\SR$ (Proposition \ref{PNcocosr}) contradict this.
Therefore $T=\infty$.
\end{proof}

The uniform length and $\SR$ estimates imply uniform control on all derivatives of curvature.
The argument is essentially from Dziuk-Kuwert-Sch\"atzle \cite{DKS2002}, and the key evolution equation (which is \cite[equation (3.2)]{DKS2002}) is also used in the proof of Theorem \ref{TMblowup}.
Since, for efficiently, we obtain exponential decay at the same time, we give the argument.

\begin{thm}
\label{TMcoco1}
Let $\gamma:\R\times[0,T)\rightarrow\R^2$ be a cocompact Chen flow with initial
data $\gamma_0:I\rightarrow\R^2$ being an immersed interval satisfying $\omega = 0$ and \eqref{EQcoco1}.

For each $m\in\N_0$ there exist constants $C_m$ and $c_m$ depending only on $\vn{k_{s^m}}_2^2(0)$, $L_0$ and $\SR(0)$ such that
\[
	\int_{\gamma} k_{s^m}^2\,ds \le C_m e^{-c_mt}
	\,,\qquad \text{ for all $t\in[0,\infty)$}\,.
\]
\end{thm}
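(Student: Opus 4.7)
The strategy is to proceed by induction on $m\in\N$, using interpolation estimates in the style of Dziuk--Kuwert--Sch\"atzle \cite{DKS2002}. The base case $m=0$ is immediate from previous results: combining the decay $\SR(t)\le \SR(0)e^{-2L_0^{-4}t}$ from Proposition \ref{PNcocosr} with the lower bound $L\ge 1$ from Lemma \ref{LMcocolength} yields
\[
\int_\gamma k^2\,ds \le \SR(t) \le \SR(0)\,e^{-2L_0^{-4}t}\,,
\]
so we may take $C_0 = \SR(0)$ and $c_0 = 2L_0^{-4}$.

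For the inductive step, I would first derive an evolution equation for $\int_\gamma k_{s^m}^2\,ds$ by combining Lemma \ref{LMevo} with the commutator identity for $[\partial_t,\partial_s]$ and repeated integration by parts. Exactly as in \cite[eq.~(3.2)]{DKS2002}, the result has the schematic form
\[
\frac{d}{dt}\int_\gamma k_{s^m}^2\,ds = -2\int_\gamma k_{s^{m+2}}^2\,ds + \sum_{r,\mathbf{j}} c_{r,\mathbf{j}}\int_\gamma P_r^{\mathbf{j}}(k)\,ds\,,
\]
where each $P_r^{\mathbf{j}}(k)$ is a product of $r\ge 3$ iterated arc-length derivatives of $k$ whose total differentiation order is $2m+4-r$. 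Every such integral obeys the DKS interpolation inequality: for any $\delta>0$,
\[
\Big|\int_\gamma P_r^{\mathbf{j}}(k)\,ds\Big| \le \delta\int_\gamma k_{s^{m+2}}^2\,ds + C(\delta,L_0)\,\vn{k}_2^{\beta_{r,\mathbf{j}}}\,,
\]
for some exponent $\beta_{r,\mathbf{j}}\ge 2$; the uniform two-sided length bound $1\le L\le L_0$ of Lemma \ref{LMcocolength} renders all constants independent of $t$.

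Choosing $\delta$ small and summing, I obtain
\[
\frac{d}{dt}\int_\gamma k_{s^m}^2\,ds + \int_\gamma k_{s^{m+2}}^2\,ds \le C\,\vn{k}_2^{\beta}
\]
for some $\beta\ge 2$ depending on $m$. Since $\omega = 0$, cocompactness forces $k$ and each $k_{s^j}$ to be periodic with zero mean over one period, so the Poincar\'e--Wirtinger inequality applied twice yields $\int_\gamma k_{s^m}^2\,ds \le (L_0/2\pi)^4\int_\gamma k_{s^{m+2}}^2\,ds$. Substituting this in for the dissipation term and using the base case on the right-hand side,
\[
\frac{d}{dt}\int_\gamma k_{s^m}^2\,ds + c\int_\gamma k_{s^m}^2\,ds \le C\,\SR(0)^{\beta/2}\,e^{-\beta L_0^{-4}t}\,,
\]
with $c>0$ depending only on $m$ and $L_0$. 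Gr\"onwall's inequality then produces $\int_\gamma k_{s^m}^2\,ds\le C_m e^{-c_m t}$, where $c_m$ is any positive constant strictly smaller than $\min\{c,\beta L_0^{-4}\}$ and $C_m$ is determined by $\vn{k_{s^m}}_2^2(0)$, $L_0$, and $\SR(0)$.

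The main obstacle is the combinatorial bookkeeping in the interpolation step: one must verify that every multi-index $\mathbf{j}$ appearing in the evolution equation produces a \emph{strictly positive} exponent $\beta_{r,\mathbf{j}}$, so that the exponential decay of $\vn{k}_2$ feeds forward into the estimate rather than merely contributing a bounded prefactor. As noted in \cite[p.~1236]{DKS2002}, this is exactly the point where the bound of order strictly less than five on the curvature power in the flow speed is used, and Chen's flow --- whose speed has curvature order three, matching the elastic flow treated in \cite{DKS2002} --- falls comfortably within that regime, so the bookkeeping is entirely parallel.
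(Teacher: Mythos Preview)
Your proposal is correct and follows essentially the same route as the paper: the DKS evolution inequality \cite[eq.~(3.2)]{DKS2002} controlling the nonlinear terms by a power of $\vn{k}_2^2$, the exponential decay of $\vn{k}_2^2$ from Proposition~\ref{PNcocosr}, and Poincar\'e to convert the $\vn{k_{s^{m+2}}}_2^2$ dissipation into a coercive term for $\vn{k_{s^m}}_2^2$, followed by Gr\"onwall. Two minor remarks: (i) you frame the argument as induction but never actually invoke the inductive hypothesis for $m-1$---only the base case $m=0$ enters on the right-hand side---so the argument is really a direct one for each $m$, exactly as in the paper; (ii) your justification of the Poincar\'e step (zero mean of $k$ from $\omega=0$, zero mean of $k_{s^j}$ for $j\ge1$ from periodicity) and your handling of the initial-data contribution in the Gr\"onwall step are in fact more carefully stated than in the paper's own proof.
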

\begin{proof}
As in Dziuk-Kuwert-Sch\"atzle we have \cite[equation (3.2)]{DKS2002}:
\begin{align*}
	\frac{d}{dt}\int_{\gamma}k_{s^m}^2\,ds
	 + \int_{\gamma}k_{s^{m+2}}^2\,ds
	\le
	 c_{m,1}\bigg(\int_{\gamma}k^2\,ds\bigg)^{2m+5}
\,.
\end{align*}
The uniform length and $\SR$ estimates imply $\vn{k}_2^2 \le L^{-1}\SR \le \SR(0)e^{-2L_0^{-4}t}$, so
\begin{align*}
	\frac{d}{dt}\int_{\gamma}k_{s^m}^2\,ds
	 + \int_{\gamma}k_{s^{m+2}}^2\,ds
	\le
	  c_{m,2}e^{-c_{m,2}t}
\,.
\end{align*}
Now we use the Poincar\'e inequality and the length estimates to find
\begin{align*}
	\frac{d}{dt}\int_{\gamma}k_{s^m}^2\,ds
	\le
	  c_{m,2}e^{-c_{m,3}t}
	-d\int_{\gamma}k_{s^m}^2\,ds
\,.
\end{align*}
This implies that $\vn{k_{s^m}}_2^2 \le \frac{c_{m,2}}{d}e^{-c_{m,3}t}$, as required.
\end{proof}

These estimates imply convergence of the position vector in the smooth topology, completing the proof of Theorem \ref{TMcoco2}.

\begin{cor}
\label{CYcoco}
Let $\gamma:\R\times[0,T)\rightarrow\R^2$ be a cocompact Chen flow with initial
data $\gamma_0:I\rightarrow\R^2$ being an immersed interval satisfying $\omega = 0$ and \eqref{EQcoco1}.

Then there exists a straight line $\gamma_\infty:\R\times[0,T)\rightarrow\R^2$
such that $\gamma\rightarrow \gamma_\infty$ exponentially fast in the $C^\infty$ topology.
\end{cor}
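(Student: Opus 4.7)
My strategy is to upgrade the exponential $L^2$ decay of Theorem \ref{TMcoco1} to exponential $L^\infty$ decay of every arc-length derivative of $k$, deduce exponential $L^\infty$ decay of $\partial_t\gamma$ together with all its parameter derivatives, and then integrate in time to obtain Cauchy-in-$t$ convergence of $\gamma$ in $C^\infty$. The limit will have vanishing curvature and, by the cocompactness condition \eqref{CoComp}, must be a straight line parallel to $e_1$.

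Two preliminary facts. Because $L(t)\in[1,L_0]$ uniformly (Lemma \ref{LMcocolength}), the one-dimensional Sobolev embedding on a single period yields $\vn{k_{s^m}}_\infty^2 \le C\bigl(\vn{k_{s^m}}_2^2 + \vn{k_{s^{m+1}}}_2^2\bigr)$, so Theorem \ref{TMcoco1} gives $\vn{k_{s^m}}_\infty \le \widetilde C_m e^{-\widetilde c_m t}$ for every $m\ge 0$. Next, by Lemma \ref{LMevo} the logarithmic derivative $\partial_t\log|\gamma_u| = Fk+G_s$ is a polynomial in $k$, $k_s$, $k_{ss}$, and hence decays as $Ce^{-ct}$ in $L^\infty$; integrating in time then yields uniform two-sided bounds $c_0\le|\gamma_u(u,t)|\le C_0$ independent of $(u,t)$.

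Then from $\partial_t\gamma = -(k_{ss}-k^3)\nu + 3kk_s\tau$ combined with the preceding step, $\vn{\partial_t\gamma(\cdot,t)}_\infty \le Ce^{-ct}$. Writing
\[
\gamma(u,t_2)-\gamma(u,t_1) = \int_{t_1}^{t_2}\partial_\tau\gamma(u,\tau)\,d\tau
\]
shows $\gamma(\cdot,t)$ is Cauchy in $L^\infty(\R)$ with exponential rate; call the limit $\gamma_\infty$. For each $k\ge 1$, the parameter derivative $\partial_u^k\partial_t\gamma$ is, via the chain rule, a universal polynomial in the exponentially decaying quantities $k_{s^j}$ for $j\le k+2$, in the uniformly bounded factors $|\gamma_u|^{\pm 1}$, and in the unit frame $\{\nu,\tau\}$. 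This yields $\vn{\partial_u^k\partial_t\gamma}_\infty\le C_ke^{-c_kt}$, and the same Cauchy argument produces $\partial_u^k\gamma(\cdot,t)\to\partial_u^k\gamma_\infty$ exponentially in $L^\infty$. Since $\vn{k(\cdot,t)}_\infty\to 0$, $\gamma_\infty$ has identically zero curvature, and the cocompactness condition passes to the limit to identify $\gamma_\infty$ as a straight line.

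The only substantive difficulty I anticipate is organisational: converting the arc-length derivatives $k_{s^j}$, in which the decay is packaged, into parameter derivatives, in which $C^\infty$ convergence is formulated, while verifying that the exponential rates survive each chain-rule step. Once the two-sided bound on $|\gamma_u|$ is in place, this reduces to standard bookkeeping.
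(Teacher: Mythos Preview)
Your proposal is correct and follows essentially the same approach as the paper: bound $|\gamma_t|$ pointwise using the exponential decay of curvature and its derivatives from Theorem~\ref{TMcoco1}, integrate in time to obtain a Cauchy sequence, and upgrade to $C^\infty$ using the curvature estimates. Your version is in fact more detailed than the paper's, which simply asserts that ``the estimates on curvature and its derivatives upgrade this convergence to $C^\infty$'' without spelling out the passage from arc-length to parameter derivatives or the control on $|\gamma_u|$; one small imprecision is that for $\partial_u^k\partial_t\gamma$ with $k\ge 2$ you will also need bounds on $\partial_u^j|\gamma_u|$ for $j\ge 1$, not just $|\gamma_u|^{\pm1}$, but these follow from the same integration-in-time argument you used for $\log|\gamma_u|$ and are indeed standard bookkeeping.
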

\begin{proof}
The exponential decay (Theorem \ref{TMcoco1}) of curvature and its derivatives imply
\[
	|\gamma_t| \le |k_{ss}| + |k|^3 + 3|k|^2\,|k_s| \le C_0e^{-c_0t}
	\,.
\]
Therefore
\[
	|\gamma(u,t_2) - \gamma(u,t_1)| = \bigg|\int_{t_1}^{t_2}\gamma_t(u,t)\,dt\bigg|
		\le C(e^{-c_0t_1} + e^{-c_0t_2})
\]
and $\gamma$ is convergent to some $\gamma_\infty$ in $C^0$.
The estimates on curvature and its derivatives upgrade this convergence to $C^\infty$.
In fact, since $k$ converges exponentially to zero, $\gamma_\infty$ must be a straight line.
\end{proof}

\begin{rmk}
We do not know if there is any way to determine (using only the initial data)
to which line the flow will converge.
\end{rmk}


\section{Asymptotic behaviour of the compact flow}
\label{sect:Asymptotics}
  
Assume the initial condition \eqref{EQhypothm}.
We rescale the flow $\gamma:I\times[0,T)\rightarrow\R^2$ to $\hat\gamma:I\times[0,\infty)\rightarrow\R^2$ by setting
\[
\hat\gamma(u, t) = (4T)^{-\frac14}e^{t}\Big(\gamma(u,T - Te^{-4t}) - \SO\Big)
\,,
\]
where $\SO$ is the final point for the flow $\gamma$ from Theorem \ref{TMfinalpoint}.
We find
\begin{align*}
\partial_t\hat\gamma
 &= \hat\gamma
 + 4T(4T)^{-\frac14}e^{t-4t}[ -\gamma_{s^4}](u,T-Te^{-ct})
\\
 &= \hat\gamma
 + 4T(4T)^{-1}e^{4t-4t} [ -\rgamma_{\rs^4}](u,T-Te^{-4t})
\,.
\end{align*}
In this way we generate, from the Chen flow $\gamma$, the rescaled flow $\hat\gamma:I\times[0,\infty)\rightarrow\R^2$.
The rescaling satisfies the evolution equation
\[
\hat\gamma_{t} = \hat\gamma - \rgamma_{\rs^4}
               = -(\hat k_{\hat s\hat s} - \hat k^3 - \IP{\rgamma}{\rnu})\hat\nu + (\IP{\rgamma}{\rtau} + 3\rk\rk_{\rs})\rtau
\,,
\]
and has initial data that differs from the initial data of $\gamma$ by scaling and translation; in particular
\[
\rgamma(u,0) = (4T)^{-\frac14}\Big(\gamma(u,0) - \SO\Big)
\,.
\]


The main purpose of performing the rescaling is to examine the asymptotic shape of the final point for the flow $\gamma$.
We first prove that the rescaling successfully yields control on the new length $\rL$.

\begin{lem}
\label{LMrlb}
Suppose $\gamma:I\times[0,T)\rightarrow\R^2$ is a compact Chen flow with
initial data $\gamma_0:I\rightarrow\R^2$ being an immersed $\omega$-circle
satisfying \eqref{EQhypothm}.

Let $\rgamma:I\times[0,\infty)\rightarrow\R^2$ be the rescaling of $\gamma$ around the final point $\SO$ (from Theorem \ref{TMfinalpoint}).
Then
\[
	c_B\sqrt2
	\le
	\rL(t)
	\le
	\frac{(\hat \SQ(0))^\frac12}{2^\frac12 c_B}
\,.
\]
\end{lem}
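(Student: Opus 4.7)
The plan is to reduce both bounds to statements about the original flow under the change of variables $\tau = \tau(t) = T - Te^{-4t}$. Writing the rescaling factor as $\lambda(t) = (4T)^{-1/4}e^t$, one computes $\lambda^{-1} = (4(T-\tau))^{1/4}$ and, directly from the definitions,
\[
    \rL(t)^4 = \frac{L(\tau)^4}{4(T-\tau)}\,,\qquad
    \int_{\rgamma} \rk^2\,d\rs = (4(T-\tau))^{1/4}\int_\gamma k^2\,ds\,.
\]
Because $\SQ$ is scale- and translation-invariant (the scaling factor $\lambda^3 \cdot (\lambda^{-4} + \lambda^{-4}) \cdot \lambda = 1$), one also has $\rSQ(t) = \SQ(\tau)$, and hence Proposition \ref{PRl4convex} gives $\rSQ(t) \le \rSQ(0)$ for every $t\ge 0$.

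For the upper bound I would rescale the blow-up estimate of Theorem \ref{TMblowup}: inserting $\int_\gamma k^2\,ds \ge c_B(T-\tau)^{-1/4}$ into the identity above gives the scale-invariant floor $\int_{\rgamma}\rk^2\,d\rs \ge \sqrt 2\, c_B$. On the other hand, Cauchy-Schwarz together with $\rL^3\int_{\rgamma}\rk^4\,d\rs \le \rSQ(t)\le \rSQ(0)$ yields
\[
    \Big(\int_{\rgamma}\rk^2\,d\rs\Big)^2 \le \rL\int_{\rgamma}\rk^4\,d\rs \le \frac{\rSQ(0)}{\rL^2}\,.
\]
Chaining these two inequalities produces $\rL(t) \le (\rSQ(0))^{1/2}/(\sqrt 2\,c_B)$, the desired upper bound.

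For the lower bound I would apply the sharp finite-time estimate \eqref{FT} to the flow restarted at time $\tau$: this produces $T-\tau \le L(\tau)^4/(64\omega^4\pi^4)$, equivalently $L(\tau)^4 \ge 64\omega^4\pi^4(T-\tau)$, whence the length identity gives $\rL(t)^4 \ge 16\omega^4\pi^4$, i.e.\ $\rL(t) \ge 2\omega\pi$. Taking $c_B$ consistently (the constant from Theorem \ref{TMblowup} can be chosen to satisfy $c_B\sqrt 2 \le 2\omega\pi$, since one can always shrink $c_B$ in the blow-up estimate) yields $\rL(t) \ge c_B\sqrt 2$ as stated.

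The substance of the argument is simply recognising that the continuous rescaling was engineered precisely so that $\int_{\rgamma}\rk^2\,d\rs$ is forced below by the rescaled blow-up estimate while $\rSQ$ inherits the monotone decrease of $\SQ$ from Proposition \ref{PRl4convex}. Once this observation is made, both bounds are routine consequences of Cauchy-Schwarz and the lifespan estimate \eqref{FT}; I do not foresee any genuine technical obstacle beyond bookkeeping the scaling exponents in the transformations between $\gamma$ and $\rgamma$.
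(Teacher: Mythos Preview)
Your upper bound argument is essentially identical to the paper's: rescale the blow-up estimate of Theorem~\ref{TMblowup} to obtain $\int_{\rgamma}\rk^2\,d\rs \ge c_B\sqrt{2}$, then combine with Cauchy--Schwarz and the monotonicity of $\rSQ$ to bound $\rL$ from above.

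Your lower bound, however, takes a genuinely different route. The paper computes the evolution $\rL' = \rL - \rL^{-3}\rSQ$, and then reuses the estimate $\rL^{-3}\rSQ \ge 2c_B^2/\rL$ (the same chain of inequalities that gave the upper bound) to obtain the differential inequality $(\rL^2)' \le 2\rL^2 - 4c_B^2$; if $\rL$ ever dropped below $c_B\sqrt{2}$ this would force $\rL\to 0$ in finite rescaled time, contradicting global existence of $\rgamma$. You instead restart the lifespan estimate \eqref{FT} at time $\tau$, which immediately gives $L(\tau)^4 \ge 64\omega^4\pi^4(T-\tau)$ and hence $\rL(t)\ge 2\omega\pi$. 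This is more direct and even produces an explicit constant; the only cost is the mild bookkeeping at the end, where you shrink $c_B$ so that $c_B\sqrt{2}\le 2\omega\pi$ and the statement reads exactly as written (this is harmless, since shrinking $c_B$ only weakens both inequalities). The paper's ODE argument has the advantage of being self-contained within the rescaled picture and of using the \emph{same} $c_B$ throughout without adjustment, but your approach is shorter and avoids the contradiction step entirely.
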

\begin{proof}
Theorem \ref{TMblowup} implies that
\[
\int_{\gamma} k^2\,ds \ge c_B(T-t)^{-1/4}\,.
\]
In the rescaling, we have
\begin{align*}
\int_{\hat\gamma} \hat k^2\,d\hat s
 &= (4T)^\frac14 e^{-t}
\int_{\gamma} k^2\,ds\bigg|_{T-Te^{-4t}}
\\
 &\ge c_B(4T)^\frac14 e^{-t}
     (T-(T-Te^{-4t}))^{-1/4}
\\
 &=
  c_B(4T)^\frac14 e^{-t}
     (Te^{-4t})^{-1/4}
  =
  c_B\sqrt2
\,.
\end{align*}
This means that a \emph{universal} (that is, not initial-data-dependent) quantum of curvature persists through the rescaling.

Observe that the estimate \eqref{EQsql1} implies, by rescaling, (here $t_1, t_2 \in [0,\infty)$, $t_1 < t_2$ are arbitrary)
\begin{equation}
\label{EQrsqestl1}
\rSQ(t_2) + \int_{t_1}^{t_2} 3\rL^2\vn{\rk_{\rs}}_2^4 + \varepsilon_2 \rL^3\vn{\rk_{{\rs}^3}}_2^2\,dt \le \rSQ(t_1)
\end{equation}
Since (using \eqref{EQrsqestl1} for the last inequality)
\begin{equation}
\label{EQrescaledest1}
	2c_B^2 \le \vn{\hat k}_2^4 \le \hat L\vn{\hat k}_4^4 \le \hat L^{-2}\hat \SQ(t) \le \hat L^{-2}\hat \SQ(0)
\end{equation}
we have the following bound for rescaled length
\[
	\hat L^2 \le \frac{\hat \SQ(0)}{2c_B^2}
	\,.
\]
For the lower bound we will use the evolution equation for rescaled length.

The stated evolution equations (Lemma \ref{LMevo}) hold for a flow with arbitrary speed, including tangential terms (as does the evolution of $\rgamma$).
Using in particular \eqref{EQlengthevo} we find
\begin{align*}
\rL'
	&= \int (\hat k_{\hat s\hat s} - \hat k^3 - \IP{\rgamma}{\rnu})\rk + \rG_{\rs}\,d\rs
\\
	&= \rL - \int_{\rgamma} \rk_{\rs}^2 + \rk^4\,d\rs
	= \rL - \rL^{-3}\rSQ
\,.
\end{align*}
Using again \eqref{EQrescaledest1} we find
\[
	{\hat L}'(t)
	\le \hat L(t) - \frac{2c_B^2}{\hat L}
\]
or
\[
	(\hat L^2)'(t) \le 2\hat L^2(t) - 4c_B^2
	\,.
\]
If
\[
	\hat L(t) < c_B\sqrt2\,,
\]
then $L$ will decrease. Since the inequality is strict, the rescaled length will vanish in finite time.
We already know that the rescaling $\hat\gamma$ exists for all time, so this can't happen.
This means that we have
\[
	\hat L(t) \ge c_B\sqrt2\,,
\]
a uniform bound from below.
\end{proof}

The uniform length bound combined with our estimate for the scale-invariant quantity $\rSQ$ yields eventual uniform positivity of the rescaled curvature $\rk$ and smallness of $\rk_{\rs}$.

\begin{lem}\label{LMkpos}
Suppose $\gamma:I\times[0,T)\rightarrow\R^2$ is a compact Chen flow with
initial data $\gamma_0:I\rightarrow\R^2$ being an immersed $\omega$-circle
satisfying \eqref{EQhypothm}.

Let $\rgamma:I\times[0,\infty)\rightarrow\R^2$ be the rescaling of $\gamma$ around the final point $\SO$ (from Theorem \ref{TMfinalpoint}).
There exists a $\delta_0 = \delta_0(\rSQ(0)) > 0$ and sequence of times $\{t_j\}$, $t_j\rightarrow\infty$, along which
\[
\vn{\rk_{\rs^3}}_2^2(t_j) \searrow0\,,\quad\text{ and }\quad
\rk(u,t_j) \ge \frac{\delta_0}{2}\,,\quad\text{ for all $u\in I$.}
\]
\end{lem}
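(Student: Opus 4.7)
The plan is to extract the desired sequence from the $L^2$-summability in time of $\rk_{\rs^3}$ (and of $\rk_{\rs}$) that is delivered by the rescaled decay estimate \eqref{EQrsqestl1}, and then promote the smallness of $\rk_{\rs}$ into $L^\infty$-closeness of $\rk$ to its average using the Poincar\'e inequality \eqref{EQlinfkoscks}. The winding-number identity plus the upper bound on $\rL$ from Lemma \ref{LMrlb} will then keep the average bounded away from zero, giving the pointwise lower bound on $\rk$.

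Setting $t_1 = 0$ and letting $t_2 \nearrow \infty$ in \eqref{EQrsqestl1} yields
\[
\int_0^\infty 3\rL^2\vn{\rk_{\rs}}_2^4 + \varepsilon_2 \rL^3\vn{\rk_{\rs^3}}_2^2\,dt \le \rSQ(0).
\]
The universal length lower bound $\rL(t) \ge c_B\sqrt{2}$ of Lemma \ref{LMrlb} converts the $\rL$-weights into absolute constants, so both $\int_0^\infty \vn{\rk_{\rs^3}}_2^2\,dt$ and $\int_0^\infty \vn{\rk_{\rs}}_2^4\,dt$ are finite. Hence there exists a sequence $t_j \nearrow \infty$ along which
\[
\vn{\rk_{\rs^3}}_2^2(t_j) \searrow 0 \qquad \text{and} \qquad \vn{\rk_{\rs}}_2(t_j) \to 0,
\]
where the first property is arranged by passing to a monotone subsequence.

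Applying the rescaled analogue of \eqref{EQlinfkoscks} (a geometric inequality, so it persists under the rescaling) gives
\[
\vn{\rk - \ork}_\infty^2(t_j) \le \frac{\rL(t_j)}{2\omega\pi}\vn{\rk_{\rs}}_2^2(t_j) \to 0,
\]
with prefactor controlled by the upper bound $\rL(t_j) \le (\rSQ(0))^{1/2}/(2^{1/2} c_B)$ of Lemma \ref{LMrlb}. Preservation of the winding number under the flow and rescaling gives $\int_{\rgamma} \rk\,d\rs = 2\pi\omega$, hence
\[
\ork(t_j) = \frac{2\pi\omega}{\rL(t_j)} \ge \frac{2\pi\omega \cdot 2^{1/2} c_B}{(\rSQ(0))^{1/2}} =: 2\delta_0 > 0,
\]
taking $\omega > 0$ without loss of generality (the hypothesis \eqref{EQhypothm} constrains $\omega \ne 0$, since the sequence we extracted otherwise combines with $\vn{\rk}_2^2 \ge 2c_B^2$ from Lemma \ref{LMrlb} to contradict $\ork = 0$). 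For all $j$ sufficiently large, $\vn{\rk - \ork}_\infty(t_j) < \delta_0$, so $\rk(u, t_j) \ge \ork(t_j) - \delta_0 \ge \delta_0 \ge \delta_0/2$ for every $u \in I$. Discarding finitely many indices yields the claim.

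The main obstacle I anticipate is cosmetic rather than substantive: verifying that the Poincar\'e inequality \eqref{EQlinfkoscks} transfers verbatim to the rescaled flow (it does, since both sides scale identically) and confirming that $\delta_0$ can indeed be chosen as a function of $\rSQ(0)$ alone (the only non-universal data entering the bound above is $\rSQ(0)$, since $c_B$ and $\omega$ are universal constants determined, respectively, by Theorem \ref{TMblowup} and by the fixed regular homotopy class of $\gamma_0$).
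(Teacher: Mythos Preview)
Your proof is correct and follows essentially the same approach as the paper: both extract the sequence $\{t_j\}$ from the time-integrability in \eqref{EQrsqestl1} combined with the length bounds of Lemma~\ref{LMrlb}, bound $\ork$ from below via the winding number and the upper length bound, and then use a Poincar\'e-type estimate to force $\rk$ close to $\ork$. The only cosmetic differences are that the paper obtains smallness of $\vn{\rk_{\rs}}_2$ along $\{t_j\}$ by applying Poincar\'e to pass from $\vn{\rk_{\rs^3}}_2$ down to $\vn{\rk_{\rs}}_2$ (rather than extracting it separately from the $\vn{\rk_{\rs}}_2^4$ term as you do), and it controls $\rk(u,t_j)-\rk(u_0,t_j)$ via the fundamental theorem of calculus plus H\"older instead of invoking \eqref{EQlinfkoscks} directly.
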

\begin{proof}
The average of $\rk$ can be estimated from below by Lemma \ref{LMrlb}:
\[
\bar{\rk} = \frac{2\omega\pi}{\rL} \ge \frac{2\omega\pi c_B\sqrt2}{\sqrt{\rSQ(0)}} =: \delta_0\,.
\]
Therefore, since $\rgamma$ is smooth and closed, at each time $t\in[0,\infty)$ there exists at least one $u_0\in I$ such that
\[
\rk(u_0,t) \ge \delta_0
\,.
\]

Now estimate \eqref{EQrsqestl1} and Lemma \ref{LMrlb} imply that along a
subsequence $t_j\rightarrow\infty$ we have $\vn{\rk_{\rs^3}}_2^2\rightarrow0$.
Let us take a further subsequence of $\{t_j\}$ (also called $\{t_j\}$) along which $\vn{\rk_{\rs^3}}_2^2\searrow0$.
Note that
\begin{equation}
\label{EQrkfrombelow1}
\rk(u,t) = \int_{u_0}^u \rk_{\rs}\,d\rs + \rk(u_0,t)
         \ge -\int_{\rgamma} |\rk_{\rs}|\,d\rs + \delta_0
\,.
\end{equation}
The first term can be made small (keeping in mind the uniform bound on length from above and below) by taking $t=t_j$ for $j$ sufficiently large.
In particular, we can choose $j_0$ large enough so that
\[
	-\int_{\rgamma} |\rk_{\rs}|\,ds\bigg|_{t=t_{j_0}}
	\ge -\rL^\frac12 \bigg( \int_{\rgamma} |\rk_{\rs}|^2\,ds \bigg)^\frac12\bigg|_{t=t_{j_0}}
	\ge -\rL^\frac12 \bigg( \frac{\rL^4}{16\omega^4\pi^4} \int_{\rgamma} |\rk_{\rs^3}|^2\,ds \bigg)^\frac12\bigg|_{t=t_{j_0}}
	\ge -\frac{\delta_0}{2}
\,.
\]
Combining this estimate with \eqref{EQrkfrombelow1} yields
\[
\rk(u,t_j) \ge \frac{\delta_0}{2}
\,.
\]
The monotone decay of $\vn{\rk_{\rs^3}}_2^2$ along the sequence $\{t_j\}$ implies that the argument here works at any time $t_j$ where $j\ge j_0$.
This finishes the proof.
\end{proof}

The evolution equations for curvature and its derivatives change in the rescaled variables according to their homogeneity.
More precisely, consider a smooth functional $\SF[\gamma]:[0,T)\rightarrow\infty$ with degree $\lambda$.
The evolution of the functional in rescaled time on $\rgamma$ is
\[
	\frac{d}{dt}\SF[\rgamma](t) = \lambda \SF[\rgamma](t) + \SF[\rgamma]'(t)\,,\quad \SF[\rgamma]:[0,\infty)\rightarrow\infty
	\,.
\]
For instance, we have
\begin{align*}
	\rL' &= \rL - \int_{\rgamma} \rk^4 + \rk_{\rs}^2 \,d\rs
\\
	\rA' &= 2\rA - \int_{\rgamma} \rk^3\,d\rs\,,
\end{align*}
and so on.
Note in particular that a scale-invariant quantity (homogeneous of degree $\lambda = 0$) has `the same' evolution in the rescaled variables.

This implies the following uniform estimates.
The proof amounts only to the observation that the interpolation method of Dziuk-Kuwert-Sch\"atzle applies
also to the rescaled flow, despite there being a `zero order term' in the speed
of the flow.

\begin{thm}
\label{TMrallest}
Suppose $\gamma:I\times[0,T)\rightarrow\R^2$ is a compact Chen flow with
initial data $\gamma_0:I\rightarrow\R^2$ being an immersed $\omega$-circle
satisfying \eqref{EQhypothm}.

Let $\rgamma:I\times[0,\infty)\rightarrow\R^2$ be the rescaling of $\gamma$
around the final point $\SO$ (from Theorem \ref{TMfinalpoint}).
For each $m\in\N_0$ there exist constants $C_m$ depending only on $\omega$ and $\vn{\rk_{\rs^m}}_2^2(0)$ such that
\[
	\int_{\rgamma} \rk_{\rs^m}^2\,d\rs \le C_m
	\,,\qquad \text{ for all $t\in[0,\infty)$}\,.
\]
\end{thm}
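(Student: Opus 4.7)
The plan is to run the Dziuk--Kuwert--Sch\"atzle interpolation scheme in the rescaled setting, exactly along the lines of the proof of Theorem \ref{TMblowup}, and to exploit the homogeneity principle stated just above the theorem. The key observation is that the extra $\rgamma$ contribution in the rescaled velocity produces only a zeroth-order homogeneity correction in each evolution equation, and for functionals of \emph{negative} scaling degree this correction is a favourable damping term.

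First I would settle the base case $m=0$. Proposition \ref{PRl4convex} gives monotonicity of the scale-invariant quantity $\SQ$, so $\rSQ(t)\le \rSQ(0)$ for all $t$. Combined with the uniform two-sided bound on $\rL$ from Lemma \ref{LMrlb}, this yields
\[
\int_{\rgamma} \rk^4\,d\rs \le \rL^{-3}\rSQ(0) \le C_0\,,\qquad
\int_{\rgamma} \rk^2\,d\rs \le \rL^{\frac12}\bigg(\int_{\rgamma}\rk^4\,d\rs\bigg)^{\frac12} \le C_0'\,,
\]
where $C_0,C_0'$ depend only on $\omega$ and $\rSQ(0)$.

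For the inductive step on $m$, I would use the rescaled evolution of $\SF_m:=\int_{\gamma} k_{s^m}^2\,ds$. Since $k$ has degree $-1$, $\partial_s$ has degree $-1$, and $ds$ has degree $+1$, $\SF_m$ has scaling degree $-(2m+1)$. By the homogeneity rule just above the theorem,
\[
\frac{d}{dt}\int_{\rgamma}\rk_{\rs^m}^2\,d\rs = -(2m+1)\int_{\rgamma}\rk_{\rs^m}^2\,d\rs + \big[\text{unrescaled evolution of }\SF_m\big]\,.
\]
The unrescaled evolution is exactly what was used in the proof of Theorem \ref{TMblowup}; the DKS interpolation inequality (equation (3.2) of \cite{DKS2002}, valid for any curve flow whose normal speed contains curvature to a power strictly less than five --- here cubic) gives
\[
\big[\text{unrescaled evolution of }\SF_m\big] + \int_{\rgamma}\rk_{\rs^{m+2}}^2\,d\rs \le c_m\bigg(\int_{\rgamma}\rk^2\,d\rs\bigg)^{2m+5}.
\]
Inserting the base-case bound on $\int_{\rgamma}\rk^2\,d\rs$ and discarding the nonnegative $\int_{\rgamma}\rk_{\rs^{m+2}}^2\,d\rs$ term, I obtain
\[
\frac{d}{dt}\int_{\rgamma}\rk_{\rs^m}^2\,d\rs + (2m+1)\int_{\rgamma}\rk_{\rs^m}^2\,d\rs \le C_m'\,.
\]
A standard ODE (Gr\"onwall) argument then delivers
\[
\int_{\rgamma}\rk_{\rs^m}^2\,d\rs \le \int_{\rgamma}\rk_{\rs^m}^2\,d\rs\bigg|_{t=0} + \frac{C_m'}{2m+1}\,,
\]
completing the induction.

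The only subtlety --- and the main thing to be careful about --- is to verify that the homogeneity principle does in fact produce precisely the $-(2m+1)\SF_m$ term in the rescaled evolution, and that the DKS interpolation proceeds unchanged despite the zeroth-order $\rgamma$ correction in the velocity of $\rgamma$. The first is a direct computation from $\partial_t = \lambda + \partial_\tau$ applied to a functional of homogeneity degree $\lambda$, and the second holds because the DKS argument depends only on the leading-order $\partial_s^4$ structure of the flow together with the cubic polynomial bound in $k$ and its derivatives; the additive $\rgamma$ term contributes a pointwise $\mathcal{O}(|\rgamma|)$ piece to $\partial_t\rk$, which, after integration by parts and use of the uniform bound on $|\rgamma|$ on compactly supported time intervals, is absorbed into the already-estimated lower-order terms.
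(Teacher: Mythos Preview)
Your proof is correct and follows essentially the same strategy as the paper: invoke the homogeneity principle to obtain the rescaled evolution with the $-(2m+1)\SF_m$ damping term, apply the DKS inequality to the Chen-flow part, bound $\int_{\rgamma}\rk^2\,d\rs$ via $\rSQ(0)$ and the length estimates from Lemma~\ref{LMrlb}, and finish with Gr\"onwall. The one minor difference is that the paper discards the rescaling damping $-(2m+1)\SF_m$ (bounding it above trivially) and instead generates damping from the dissipation term $-\tfrac12\int_{\rgamma}\rk_{\rs^{m+2}}^2\,d\rs$ via Poincar\'e together with the uniform length bounds; your route, which uses the built-in damping from the rescaling directly and drops the dissipation term, is slightly more economical. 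Your closing remarks about an $|\rgamma|$ bound are unnecessary and potentially misleading: once the homogeneity principle has absorbed the $\rgamma$-part of the rescaled velocity into the $-(2m+1)\SF_m$ term, the residual evolution is exactly the Chen-flow evolution of $\SF_m$, to which the DKS estimate applies verbatim with no reference to the position vector at all.
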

\begin{proof}
We have the following version of \cite[equation (3.2)]{DKS2002}:
\begin{align*}
	\frac{d}{dt}\int_{\rgamma}\rk_{\rs^m}^2\,d\rs
	 + \int_{\rgamma}\rk_{\rs^{m+2}}^2\,d\rs
	\le
	  -(1+2m)\int_{\rgamma}\rk_{\rs^m}^2\,d\rs
	+ c_m\bigg(\int_{\rgamma}\rk^2\,d\rs\bigg)^{2m+5}
\,.
\end{align*}
Integration by parts yields the estimate
\[
	  -(1+2m)\int_{\rgamma}\rk_{\rs^m}^2\,d\rs
	\le \frac12\int_{\rgamma}\rk_{\rs^{m+2}}^2\,d\rs
	 + c_{m,1}\int_{\rgamma}\rk^2\,d\rs
\,,
\]
which gives, after absorption,
\begin{align*}
	\frac{d}{dt}\int_{\rgamma}\rk_{\rs^m}^2\,d\rs
	 + \frac12\int_{\rgamma}\rk_{\rs^{m+2}}^2\,d\rs
	\le
	  c_{m,2}\int_{\rgamma}\rk^2\,d\rs\bigg(1 + \Big(\int_{\rgamma}\rk^2\,d\rs\Big)^{2m+4}\bigg)
\,.
\end{align*}
Note that we have
\[
\int_{\rgamma}\rk_{\rs^{m}}^2\,d\rs
\le
	\frac{L^4}{(2\omega\pi)^4}
	\int_{\rgamma}\rk_{\rs^{m+2}}^2\,d\rs
\]
so that, with the uniform length estimates (Lemma \ref{LMrlb}), we have
\begin{equation}
\label{EQmainest}
	\frac{d}{dt}\int_{\rgamma}\rk_{\rs^m}^2\,d\rs
	\le
	   c_{m,2}\int_{\rgamma}\rk^2\,d\rs\bigg(1 + \Big(\int_{\rgamma}\rk^2\,d\rs\Big)^{2m+4}\bigg)
	 - D\int_{\rgamma}\rk_{\rs^{m}}^2\,d\rs
\,.
\end{equation}
Now we also have, from the uniform estimate \eqref{EQsql1} on $\SQ$, and the uniform estimate on length, that
\begin{equation}
\label{EQrcurvatureest}
	\int_{\rgamma}\rk^2\,d\rs \le \sqrt{\rL\vn{\rk}_4^4} \le \sqrt{\rL^{-2}\rSQ(0)} \le C_0
	\,.
\end{equation}
We therefore conclude from \eqref{EQmainest} that
\begin{equation}
\label{EQmainest1}
	\frac{d}{dt}\int_{\rgamma}\rk_{\rs^m}^2\,d\rs
	\le
	   c_{m,3}
	 - D\int_{\rgamma}\rk_{\rs^{m}}^2\,d\rs
\,.
\end{equation}
This implies that $\int_{\rgamma}\rk_{\rs^m}^2\,d\rs \le \frac{c_{m,3}}{D} =: C_m$, as required.
\end{proof}

The missing piece of information that we need to conclude uniform boundedness
of $\gamma(\cdot, t)$ in $C^m(I)$ for any $m$ is an estimate on $|\rgamma|$,
the position vector.
We establish this now.

\begin{prop}
\label{PNrgbdd}
Suppose $\gamma:I\times[0,T)\rightarrow\R^2$ is a compact Chen flow with
initial data $\gamma_0:I\rightarrow\R^2$ being an immersed $\omega$-circle
satisfying \eqref{EQhypothm}.

Let $\rgamma:I\times[0,\infty)\rightarrow\R^2$ be the rescaling of $\gamma$
around the final point $\SO$ (from Theorem \ref{TMfinalpoint}).
Then
\[
|\rgamma|(u,t) \le C
\]
for all $(u,t)\in I\times[0,\infty)$, where $C$ depends only on $\omega$, $\vn{k}_2^2(0)$, and $\rSQ(0)$.
\end{prop}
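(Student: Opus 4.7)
The goal is to show that the rescaled position vector stays bounded, and the guiding observation is that Chen's flow is fourth order, so points move with velocity that blows up at rate $(T-t)^{-3/4}$ near the singularity, and the rescaling factor $(4(T-t))^{-1/4}$ is tailored precisely to absorb this rate. The plan is: (i) use the uniform bounds from Theorem \ref{TMrallest} together with Lemma \ref{LMrlb} to get uniform $L^\infty$ estimates for $\rk, \rk_{\rs}, \rk_{\rs\rs}$; (ii) translate these back through the scaling relations into a pointwise bound on $|\partial_t\gamma|$ in original variables; (iii) integrate in time to obtain a H\"older-in-time estimate for $\gamma(u,t)$ against its limit $\SO$; and (iv) rescale to conclude.

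\textbf{Step 1} uses the fact that Theorem \ref{TMrallest} (applied for $m=0,1,2,3$) bounds $\vn{\rk_{\rs^m}}_2$ uniformly in time, and the uniform length bounds of Lemma \ref{LMrlb} allow 1D Sobolev embedding on $\rgamma(\cdot,t)$ to give a universal constant $C_\infty$ with $\vn{\rk}_\infty+\vn{\rk_{\rs}}_\infty+\vn{\rk_{\rs\rs}}_\infty \le C_\infty$. \textbf{Step 2}: with $a(t):=(4(T-t))^{-1/4}$ the instantaneous rescaling factor, the scaling relations $\rk=a\,k$, $\rk_{\rs}=a^{-2}k_s$, $\rk_{\rs\rs}=a^{-3}k_{ss}$ give
\[
|\partial_t\gamma|(u,t) \le |k_{ss}|+|k|^3+3|k||k_s| = a^3\bigl(|\rk_{\rs\rs}|+|\rk|^3+3|\rk||\rk_{\rs}|\bigr) \le C_\infty'\,(T-t)^{-3/4}.
\]
\textbf{Step 3}: integrating from $t$ to $s\in(t,T)$ and using the pointwise convergence $\gamma(u,s)\to\SO$ (which follows from the Hausdorff set-convergence $\gamma(I,s)\to\{\SO\}$ in Theorem \ref{TMfinalpoint}, since $\sup_u|\gamma(u,s)-\SO|\to 0$), sending $s\to T$ yields
\[
|\gamma(u,t)-\SO| \le C_\infty''\,(T-t)^{1/4}.
\]
\textbf{Step 4}: by definition of the rescaling,
\[
|\rgamma(u,\tau)| = a(t)\,|\gamma(u,t)-\SO| \le (4(T-t))^{-1/4}\cdot C_\infty''(T-t)^{1/4} = 4^{-1/4}C_\infty'',
\]
the exact cancellation reflecting the scale-invariance built into $\rgamma$.

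\textbf{Main obstacle.} The mechanism is clean, but the delicate part is tracking all constants so that the final bound depends only on $\omega$, $\vn{k}_2^2(0)$, and $\rSQ(0)$: this requires expressing the initial rescaled Sobolev norms $\vn{\rk_{\rs^m}}_2^2(0)$ used in Theorem \ref{TMrallest} in terms of the original data and $T$, and bounding $T$ in terms of $L_0$ and $\omega$ via the sharp lifespan estimate \eqref{FT} and Lemma \ref{LMrlb}. The algebraic heart of the proof is the matching of exponents $3/4$ and $1/4$, which is forced by the fourth-order parabolic scaling of Chen's flow and is the one-dimensional analogue of the pointwise bound on the rescaled immersion used in the analysis of mean curvature flow around a Type I singularity; the interpolation work in Step 1 and the time-integration in Step 3 are then essentially bookkeeping.
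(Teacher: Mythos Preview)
Your argument correctly proves that $|\rgamma|$ is bounded, but the route through Theorem~\ref{TMrallest} with $m=3$ (needed to bound $\vn{\rk_{\rs\rs}}_\infty$) introduces a dependence on $\vn{\rk_{\rs^3}}_2^2(0)$ that cannot be removed in the way you propose: expressing this norm ``in terms of the original data and $T$'' gives $(4T)^{7/4}\vn{k_{s^3}}_2^2(0)$, and $\vn{k_{s^3}}_2^2(0)$ is a genuinely higher-order datum not controlled by $\omega$, $\vn{k}_2^2(0)$, and $\rSQ(0)$. So your scheme proves the qualitative boundedness but not with the stated constants, and the fix you sketch in the ``Main obstacle'' paragraph does not close this gap.

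The paper takes a different path that avoids pointwise-in-time $L^\infty$ curvature bounds altogether. Instead of bounding $|\partial_t\gamma|$ pointwise and then integrating, it bounds the time integrals $\int_{T-Te^{-4t}}^T |k_{ss}|\,dr$, $\int |k|^3\,dr$, $\int |k||k_s|\,dr$ directly using the \emph{time-integrated} energy estimate~\eqref{EQsql1}, which gives $\int_0^T L^3\vn{k_{s^3}}_2^2\,dr \le \varepsilon_2^{-1}\SQ(0)$ with no higher-order initial dependence. Concretely, $|k_{ss}| \le L^{1/2}\vn{k_{s^3}}_2$ is split via H\"older in time as
\[
\int_{T-Te^{-4t}}^T L^{-1}\cdot L^{3/2}\vn{k_{s^3}}_2\,dr \le \Big(\int L^{-2}\,dr\Big)^{1/2}\Big(\int L^3\vn{k_{s^3}}_2^2\,dr\Big)^{1/2},
\]
and the length asymptotics $L(t)\sim (1-t/T)^{1/4}$ from Lemma~\ref{LMrlb} make the first factor $\le Ce^{-t}$. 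The $|k|^3$ and $|k||k_s|$ terms are handled by expanding $k=(k-\kv)+\kv$ and using only $L^3\vn{k_s}_2^2\le\SQ(0)$ together with the same length asymptotics. This yields $|\gamma(u,T-Te^{-4t})-\SO|\le Ce^{-t}$ with $C=C(\omega,\SQ(0))$; the dependence on $\vn{k}_2^2(0)$ enters only at the final step, where the lower bound $T\ge c_B^4\vn{k}_2^{-8}(0)$ from Theorem~\ref{TMblowup} controls the factor $(4T)^{-1/4}$ in the definition of $\rgamma$.

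In short: you integrate a pointwise speed bound (conceptually clean, but paying for pointwise control with higher-order initial data), whereas the paper integrates an $L^2$-in-time energy bound that comes for free from the convexity estimate for $L^4$ and so achieves the sharper constant dependence.
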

\begin{proof}
In this proof we use $C$ to denote a universal constant (one depending only on $\omega$, $\vn{\rk_{\rs^2}}_2^2(0)$, and $\rSQ(0)$) that may change from line to line.

From the definition of $\rgamma$ we see that the desired estimate is equivalent to
\begin{equation}
\label{EQdesired0}
	|\gamma(u, T-Te^{-4t}) - \SO| \le C(4T)^\frac14\, e^{-t}
\,.
\end{equation}
The blowup estimate (Theorem \ref{TMblowup}) implies that
\[
	T \ge c_B\bigg(\int_\gamma k^2\,ds\bigg)^{-4}\bigg|_{t=0}\,,
\]
so that \eqref{EQdesired0} is equivalent to
\begin{equation}
\label{EQdesired1}
	|\gamma(u, T-Te^{-4t}) - \SO| \le C e^{-t}
\,.
\end{equation}
We calculate (using Theorem \ref{TMfinalpoint})
\begin{equation}
\label{EQhaveest1}
\begin{split}
	|\gamma(u, T-Te^{-4t}) - \SO|
	&= |\gamma(u, T-Te^{-4t}) - \gamma(u,T)|
	\\
	&= \bigg|
		\int_{T-Te^{-4t}}^T \gamma_t(u,r)\,dr
		\bigg|
	\\
	&= \bigg|
		\int_{T-Te^{-4t}}^T
			(-k_{ss} + k^3)\nu + 3kk_s\tau
			\,dr
		\bigg|
	\\
	&\le
		\int_{T-Te^{-4t}}^T
			|k_{ss}|
			\,dr
		+ \int_{T-Te^{-4t}}^T
			|k|^3
			\,dr
		+ 3\int_{T-Te^{-4t}}^T
			|k|\,|k_s|
			\,dr
\,.
\end{split}
\end{equation}
We estimate each of the terms on the right hand side of \eqref{EQhaveest1} in turn.
First
\begin{equation}
\label{EQhaveest2}
\begin{split}
		\int_{T-Te^{-4t}}^T
			|k_{ss}|
			\,dr
	&\le
		\int_{T-Te^{-4t}}^T
			\vn{k_{s^3}}_1
			\,dr
	\\
	&\le
		\int_{T-Te^{-4t}}^T
			L^\frac12\vn{k_{s^3}}_2
			\,dr
	\\
	&\le
		\int_{T-Te^{-4t}}^T
			L^{-1}L^\frac32\vn{k_{s^3}}_2
			\,dr
	\\
	&\le
		\bigg(
		\int_{T-Te^{-4t}}^T
			L^{-2}
			\,dr
		\bigg)^\frac12
		\bigg(
		\int_{T-Te^{-4t}}^T
			L^3\vn{k_{s^3}}_2^2
			\,dr
		\bigg)^\frac12
	\,.
\end{split}
\end{equation}
Now from estimate \eqref{EQsql1} we find
\begin{equation}
\label{EQhaveest3}
		\int_{T-Te^{-4t}}^T
			L^3\vn{k_{s^3}}_2^2
			\,dr
	\le \varepsilon_2^{-1}\SQ(T-Te^{-4t})
	\le \varepsilon_2^{-1}\SQ(0)
\,.
\end{equation}
Combining \eqref{EQhaveest3} above with \eqref{EQhaveest2} before we obtain
\begin{equation}
\label{EQhaveest4}
		\int_{T-Te^{-4t}}^T
			|k_{ss}|
			\,dr
	\le
		C\bigg(
		\int_{T-Te^{-4t}}^T
			L^{-2}
			\,dr
		\bigg)^\frac12
\,.
\end{equation}
Now since the rescaled length satisfies $\rL(t) = e^tL(T-Te^{-4t})$, Lemma \ref{LMrlb} implies
\[
	ce^{-t} \le L(T-Te^{-4t}) \le Ce^{-t}
\,.
\]
In terms of the original variables, this equates to the estimate
\[
	c\Big(1-\frac{t}{T}\Big)^\frac14
	\le L(t) \le
	C\Big(1-\frac{t}{T}\Big)^\frac14
\,.
\]
Integration yields
\begin{equation}
\label{EQhaveest5}
	\int_{T-Te^{-4t}}^T
		L^{-2}
		\,dr
	\le
	c\int_{T-Te^{-4t}}^T
		\Big(1-\frac{r}{T}\Big)^{-\frac12}
		\,dr
	\le
	ce^{-2t}
\,.
\end{equation}
Combining \eqref{EQhaveest4} with \eqref{EQhaveest5} we see that
\begin{equation}
\label{EQterm1}
		\int_{T-Te^{-4t}}^T
			|k_{ss}|
			\,dr
	\le Ce^{-t}
\,.
\end{equation}
This deals with the first term on the right hand side of \eqref{EQhaveest1}.

For the second term, we first expand
\begin{equation}
\label{EQhaveest6}
\begin{split}
	\int_{T-Te^{-4t}}^T |k|^3\,dr
	&\le
		\int_{T-Te^{-4t}}^T
		(k-\k)^3 + 3\k(k-\k)^2 + 3\k^2(k-\k) + \k^3
		\,dr
\end{split}
\end{equation}
We estimate each term in turn as follows:
\begin{align*}
		\int_{T-Te^{-4t}}^T
		\k^3
		\,dr
	&\le
		8\omega^3\pi^3
		\int_{T-Te^{-4t}}^T L^{-3} \,dr
	\\
	&\le
		C
		\int_{T-Te^{-4t}}^T
		\Big(1-\frac{r}{T}\Big)^{-\frac34}
		\,dr
	\\
	&\le
		Ce^{-t}
\\
		3\int_{T-Te^{-4t}}^T
		\k^2(k-\k)
		\,dr
	&\le
		12\omega^2\pi^2\int_{T-Te^{-4t}}^T
		L^{-2}L^\frac12\vn{k_s}_2
		\,dr
	\\
	&\le
		C\int_{T-Te^{-4t}}^T
		L^{-3}\sqrt{L^3\vn{k_s}_2^2}
		\,dr
	\\
	&\le
		C\int_{T-Te^{-4t}}^T
		L^{-3}
		\,dr
	\\
	&\le
		Ce^{-t}
\\
		3\int_{T-Te^{-4t}}^T
		\k(k-\k)^2
		\,dr
	&\le
		C\int_{T-Te^{-4t}}^T
		L^{-3}L^3\vn{k_s}_2^2
		\,dr
	\\
	&\le
		C\int_{T-Te^{-4t}}^T
		L^{-3}
		\,dr
	\\
	&\le
		Ce^{-t}
\\
		\int_{T-Te^{-4t}}^T
		(k-\k)^3
		\,dr
	&\le
		C\int_{T-Te^{-4t}}^T
		\vn{k_s}_1^3
		\,dr
	\\
	&\le
		C\int_{T-Te^{-4t}}^T
		L^{\frac32}\vn{k_s}_2^3
		\,dr
	\\
	&\le
		C\int_{T-Te^{-4t}}^T
		L^{-3}\Big(L^3\vn{k_s}_2^2\Big)^\frac32
		\,dr
	\\
	&\le
		Ce^{-t}
\,.
\end{align*}
Therefore combining all of these estimates with \eqref{EQhaveest6} we find
\begin{equation}
\label{EQhaveest7}
\begin{split}
	\int_{T-Te^{-4t}}^T |k|^3\,dr
	&\le
		Ce^{-t}
\,.
\end{split}
\end{equation}
For the remaining term on the right hand side of \eqref{EQhaveest1} we similarly expand
\begin{equation}
\label{EQhaveest8}
\begin{split}
	3\int_{T-Te^{-4t}}^T |k|\,|k_s|\,dr
	&\le
	3\int_{T-Te^{-4t}}^T |k-\k|\,|k_s|\,dr
	+ 3\int_{T-Te^{-4t}}^T \k\,|k_s|\,dr
\,.
\end{split}
\end{equation}
We estimate each of these terms by
\begin{align*}
	3\int_{T-Te^{-4t}}^T |k-\k|\,|k_s|\,dr
	&\le
	3\int_{T-Te^{-4t}}^T |k_s|L^{-1}\sqrt{L^3\vn{k_s}_2^2}\,dr
	\\
	&\le
	C\int_{T-Te^{-4t}}^T \vn{k_{s^3}}_1L^{-1}\,dr
	\\
	&\le
	C\int_{T-Te^{-4t}}^T L^\frac12\vn{k_{s^3}}_2L^{-1}\,dr
	\\
	&\le
	C\bigg(
	\int_{T-Te^{-4t}}^T L^3\vn{k_{s^3}}_2^2\,dr
	\bigg)^\frac12
	 \bigg(
	 \int_{T-Te^{-4t}}^T L^{-2}\,dr
	\bigg)^\frac12
	\\
	&\le
		Ce^{-t}
\\
	3\int_{T-Te^{-4t}}^T \k\,|k_s|\,dr
	&\le
	C\int_{T-Te^{-4t}}^T \vn{k_{s^3}}_1L^{-1}\,dr
	\\
	&\le
		Ce^{-t}
\,.
\end{align*}
Now combining these with \eqref{EQhaveest8} gives
\begin{equation}
\label{EQhaveest9}
\begin{split}
	3\int_{T-Te^{-4t}}^T |k|\,|k_s|\,dr
	&\le
	Ce^{-t}
\,.
\end{split}
\end{equation}
Finally combining each of \eqref{EQterm1}, \eqref{EQhaveest7}, and \eqref{EQhaveest9} with \eqref{EQhaveest1} yields \eqref{EQdesired1}, and so finishes the proof.
\end{proof}

Therefore $\rgamma$ and all of its derivatives are uniformly bounded.
In order to conclude convergence, we obtain decay of the isoperimetric ratio
\[
	\rSI = 1 - \frac{4\omega\pi\rA}{\rL^2}
	\,.
\]

\begin{prop}
\label{PNdecayrsi}
Suppose $\gamma:I\times[0,T)\rightarrow\R^2$ is a compact Chen flow with
initial data $\gamma_0:I\rightarrow\R^2$ being an immersed $\omega$-circle
satisfying \eqref{EQhypothm}.

Let $\rgamma:I\times[0,\infty)\rightarrow\R^2$ be the rescaling of $\gamma$
around the final point $\SO$ (from Theorem \ref{TMfinalpoint}).
Then there exists constants $C < \infty$, $\varepsilon_3 > 0$ depending only on $\omega$, $\rSQ(0)$ and $\vn{\rk_{\rs^2}}_2^2(0)$ such that
\[
\rSI(t) \le Ce^{-\varepsilon_3 t}\,.
\]
\end{prop}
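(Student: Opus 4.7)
The plan is to establish exponential decay of the scale-invariant Lyapunov quantity $\rSQ_0 := \rSQ - 16\omega^4\pi^4$, upgrade this to exponential convergence of $\rL$ and $\rk$ toward their round-$\omega$-circle values $2\omega\pi$ and $1$, and then read off decay of $\rSI$ from a linear ODE satisfied by $\Xi := \rL^2 - 4\omega\pi\rA$. Jensen and Cauchy--Schwarz combined with $\int_{\rgamma}\rk\,d\rs = 2\omega\pi$ give the chain $\rSQ \ge \rL^3\vn{\rk}_4^4 \ge \rL^2\vn{\rk}_2^4 \ge 16\omega^4\pi^4$, with equality iff $\rgamma$ is a round $\omega$-circle, so $\rSQ_0 \ge 0$.

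First I would prove the Lyapunov-type estimate $\rSQ_0 \le C\vn{\rk_{\rs}}_2^2 \le C'\vn{\rk_{\rs^3}}_2^2$ by expanding $\vn{\rk}_4^4$ around $\bar\rk$, applying Poincar\'e twice to the zero-mean functions $\rk_{\rs}$ and $\rk_{\rs\rs}$, and using the uniform bounds from Lemma \ref{LMrlb} and Theorem \ref{TMrallest} to absorb the resulting quartic cross-terms. Combined with the rescaled dissipation estimate \eqref{EQrsqestl1}, namely $\rSQ'(t) \le -\varepsilon_2\rL^3\vn{\rk_{\rs^3}}_2^2$, this yields $\rSQ_0' \le -\lambda\rSQ_0$ and hence $\rSQ_0(t) \le \rSQ_0(0)e^{-\lambda t}$ by Gronwall. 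This Lyapunov inequality is the main technical obstacle: careful bookkeeping of the expansion of $\vn{\rk}_4^4 - \bar\rk^4\rL$ is needed, but every higher-order term is absorbable thanks to the uniform $C^\infty$ bounds of Theorem \ref{TMrallest}.

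Next, exponential decay of $\rSQ_0$ gives exponential decay of $\vn{\rk_{\rs}}_2^2$ (since $\rL^3\vn{\rk_{\rs}}_2^2 \le \rSQ_0$), and hence of $\vn{\rk-\bar\rk}_{C^m}$ for every $m$ by interpolation against the uniform $C^\infty$ bounds. For length, set $\xi := \rL^4 - 16\omega^4\pi^4$; the identity $\rL' = \rL - \rL^{-3}\rSQ$ gives $\xi' = 4\xi - 4\rSQ_0$. Since $\xi$ is uniformly bounded (Lemma \ref{LMrlb}) while $\rSQ_0$ decays exponentially, the $e^{4t}$-mode coefficient must vanish at infinity, forcing the forward variation-of-parameters formula $\xi(t) = -4\int_t^\infty e^{4(t-s)}\rSQ_0(s)\,ds$ and hence $|\xi(t)| \le Ce^{-\lambda t}$.

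Finally, using $\rA' = 2\rA - \int\rk^3\,d\rs$ together with the length evolution, I would derive
\[
	\Xi' = 2\Xi + \Psi,\qquad \Psi := 4\omega\pi\int_{\rgamma}\rk^3\,d\rs - \frac{2\rSQ}{\rL^2}.
\]
Both summands of $\Psi$ equal $8\omega^2\pi^2$ at the unit $\omega$-circle, so $\Psi$ vanishes there; expanding $\int\rk^3\,d\rs$ about $\bar\rk$ shows that the deviations of each summand from its equilibrium value are controlled by $|\xi|$, $\vn{\rk-\bar\rk}_{C^0}$ and $\rSQ_0$, each of which decays exponentially, so $|\Psi(t)| \le Ce^{-\lambda' t}$. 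Since $\Xi$ is uniformly bounded (by Lemma \ref{LMrlb} and $|\rA| \le \tfrac12\sup|\rgamma|\,\rL$ from Proposition \ref{PNrgbdd}), the same Duhamel argument as for $\xi$ applied to $\Xi' - 2\Xi = \Psi$ yields $|\Xi(t)| \le Ce^{-\varepsilon_3 t}$. Dividing by the uniformly positive $\rL^2$ gives the claimed decay of $\rSI$.
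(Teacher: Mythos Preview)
Your approach is correct and genuinely different from the paper's.  The paper works directly with the evolution of $\rSI$: it expands $\int\rk^3 - 2\rA\rL^{-1}\int\rk^4$ in powers of $(\rk-\ork)$, invokes an external Fourier-series estimate $\rSI \le C\vn{\rk-\ork}_2^2$ from~\cite{Okinawa}, and uses the sequence $\{t_j\}$ of Lemma~\ref{LMkpos} (along which $\rk>\delta_0/2$, hence the curve is star-shaped and the signed area has a uniform positive lower bound) to eventually close the differential inequality $\rSI' \le -\varepsilon_3\rSI$ for all large $t$.  Your route instead first proves exponential decay of the Lyapunov excess $\rSQ_0 = \rSQ - 16\omega^4\pi^4$ by combining the dissipation inequality \eqref{EQrsqestl1} with the Poincar\'e bound $\rSQ_0 \le C\vn{\rk_{\rs^3}}_2^2$, and then transfers this decay to $\xi = \rL^4 - 16\omega^4\pi^4$ and $\Xi = \rL^2 - 4\omega\pi\rA$ via the ``bounded solution of an unstable linear ODE with decaying forcing'' trick.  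This avoids both the Fourier estimate and the star-shapedness/area-positivity argument entirely; on the other hand it relies on the uniform bound for $|\rgamma|$ (Proposition~\ref{PNrgbdd}) to control $|\rA|$, whereas the paper's proof of Proposition~\ref{PNdecayrsi} does not.  One minor slip: in your variation-of-parameters formula the sign should read $\xi(t) = +4\int_t^\infty e^{4(t-s)}\rSQ_0(s)\,ds$ (since $\rSQ_0 \ge 0$ this in fact shows $\rL \ge 2\omega\pi$ for all $t$), but this does not affect the decay conclusion.
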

\begin{proof}
First let us calculate
\begin{equation}
\label{EQdecay0}
\begin{split}
	\SI'
	&= -\frac{4\omega\pi}{\rL^2}\rA' + 2\frac{4\omega\pi}{\rL^3}\rL'
\\
	&= \frac{4\omega\pi}{\rL^2}\bigg(
		\int_\rgamma \rk^3\,d\rs
		- 2\frac{\rA}{\rL}\int_\rgamma \rk^4 + \rk_{\rs}^2\,d\rs
	\bigg)
\,.
\end{split}
\end{equation}
Expanding out $\rk^3$ and $\rk^4$ in terms of $(\rk - \ork)$ and $\ork$, we find
\begin{equation}
\label{EQdecay1}
\begin{split}
		\int_\rgamma \rk^3\,d\rs
		- 2\frac{\rA}{\rL}\int_\rgamma \rk^4 \,d\rs
	&=
		- 2\frac{\rA}{\rL}\int_\rgamma (\rk-\ork)^4 \,d\rs
		+ \Big(1 - 8\ork\frac{\rA}{\rL}\Big) \int_\rgamma (\rk-\ork)^3 \,d\rs
\\&\qquad
		+ \ork\Big(3 - 12\ork\frac{\rA}{\rL}\Big) \int_\rgamma (\rk-\ork)^2 \,d\rs
		+ \ork^3\Big(1 - 2\ork\frac{\rA}{\rL}\Big) \rL
\,.
\end{split}
\end{equation}
Since $1 - 2\ork\rA/\rL = \rSI$, the above is in fact
\begin{equation}
\label{EQdecay2}
\begin{split}
		\int_\rgamma \rk^3\,d\rs
		- 2\frac{\rA}{\rL}\int_\rgamma \rk^4 \,d\rs
	&=
		- 2\frac{\rA}{\rL}\int_\rgamma (\rk-\ork)^4 \,d\rs
		+ \Big(\rSI - \frac{12\omega\pi\rA}{\rL^2}\Big) \int_\rgamma (\rk-\ork)^3 \,d\rs
\\&\qquad
		+ 3\ork\Big(\rSI - \frac{4\omega\pi\rA}{\rL^2}\Big) \int_\rgamma (\rk-\ork)^2 \,d\rs
		+ \ork^3\rL\rSI
\,.
\end{split}
\end{equation}
Now we use the estimate
\begin{equation}
\label{EQdecayest1}
	\rSI \le \frac{\rL}{8\omega^2\pi^2} \int_{\rgamma} (\rk-\ork)^2\,d\rs
\end{equation}
(which follows by using the Fourier series technique from \cite[proof of
Theorem 6.1]{Ideal}, a proof is included in the appendix of \cite{Okinawa}), the isoperimetric inequality, and the simpler estimate
\begin{equation}
\label{EQdecayest2}
	\vn{\rk-\ork}_\infty^2 \le \frac{\rL}{2\omega\pi} \int_{\rgamma} \rk_{\rs}^2\,d\rs
\end{equation}
with \eqref{EQdecay2} to conclude
\begin{equation}
\label{EQdecay3}
\begin{split}
		\int_\rgamma \rk^3\,d\rs
		- 2\frac{\rA}{\rL}\int_\rgamma \rk^4 \,d\rs
	&\le
		- 2\frac{\rA}{\rL}\int_\rgamma (\rk-\ork)^4 \,d\rs
		- 3\ork\frac{4\omega\pi\rA}{\rL^2} \int_\rgamma (\rk-\ork)^2 \,d\rs
\\&\qquad
		+ \Big(\frac{\rL^3}{2(2\omega\pi)^4}\vn{\rk_{\rs}}_2^2 + 3\Big)\frac{\rL^\frac12}{(2\omega\pi)^\frac12} \vn{\rk_{\rs}}_2 \int_\rgamma (\rk-\ork)^2 \,d\rs
\\&\qquad
		+ 3\ork\frac{\rL^3}{2(2\omega\pi)^4}\vn{\rk_{\rs}}_2^2
			\int_\rgamma (\rk-\ork)^2 \,d\rs
		+ \ork^3\frac{\rL^2}{8\omega^2\pi^2} \int_{\rgamma} (\rk-\ork)^2\,d\rs
\,.
\end{split}
\end{equation}
Since
\[
		- 3\ork\frac{4\omega\pi\rA}{\rL^2}
		+ \ork^3\frac{\rL^2}{8\omega^2\pi^2}
	\le \ork\Big(
		  \frac12
		- \frac12\frac{4\omega\pi\rA}{\rL^2}
		- \frac52\frac{4\omega\pi\rA}{\rL^2}
		\Big)
	  = \ork\Big(
		  \rSI
		- \frac52\frac{4\omega\pi\rA}{\rL^2}
		\Big)
\]
we may refine \eqref{EQdecay3} to
\begin{equation}
\label{EQdecay4}
\begin{split}
		\int_\rgamma \rk^3\,d\rs
		- 2\frac{\rA}{\rL}\int_\rgamma \rk^4 \,d\rs
	&\le
		- 2\frac{\rA}{\rL}\int_\rgamma (\rk-\ork)^4 \,d\rs
		- \frac{20\omega^2\pi^2\rA}{\rL^3}
			\int_\rgamma (\rk-\ork)^2 \,d\rs
\\&\qquad
		+ \Big(\frac{\rL^3}{2(2\omega\pi)^4}\vn{\rk_{\rs}}_2^2 + 3\Big)\frac{\rL^\frac12}{(2\omega\pi)^\frac12} \vn{\rk_{\rs}}_2 \int_\rgamma (\rk-\ork)^2 \,d\rs
\\&\qquad
		+ 3\ork\frac{\rL^3}{2(2\omega\pi)^4}\vn{\rk_{\rs}}_2^2
			\int_\rgamma (\rk-\ork)^2 \,d\rs
	\\
	&\le
		- 2\frac{\rA}{\rL}\int_\rgamma (\rk-\ork)^4 \,d\rs
		+ \bigg(
		  \Big(\frac{\rL^3}{2(2\omega\pi)^4}\vn{\rk_{\rs}}_2^2 + 3\Big)\frac{\rL^\frac12}{(2\omega\pi)^\frac12} \vn{\rk_{\rs}}_2
\\&\qquad
		+ 3\ork\frac{\rL^3}{2(2\omega\pi)^4}\vn{\rk_{\rs}}_2^2
		- \frac{20\omega^2\pi^2\rA}{\rL^3} \bigg)
			\int_\rgamma (\rk-\ork)^2 \,d\rs
\,.
\end{split}
\end{equation}
The uniform length bound (Lemma \ref{LMrlb}) and Lemma \ref{LMkpos} implies that there exists a sequence $\{t_j\}$ along which $\vn{\rk_{\rs}}_2(t_j)\rightarrow0$.
Furthermore, the sequence $\{t_j\}$ has the property that $\rk(u,t_j) \ge \frac{\delta_0}{2}$, where $\delta_0 = \delta_0(\rSQ(0))$.
Therefore the curves $\rgamma(\cdot, t_j)$ are star-shaped with respect to some point $p_j\in\R^2$ and so $\rA(t_j) > 0$; in particular we have the estimate
\[
	A(t_j) \ge \frac{\rL(t_j)}{2\vn{\rk}_\infty}
\]
which follows because
\[
	\rL = -\int_\rgamma \rk\IP{\rgamma}{\rnu}\,d\rs
	  = -\int_\rgamma \rk\IP{\rgamma-p_j}{\rnu}\,d\rs
	  \le 2\vn{\rk}_\infty \rA(t_j)
	\,.
\]
Since $\vn{\rk_{\rs}}_2(t_j)\rightarrow0$ we have (using also again the uniform length estimates)
\[
\vn{\rk}_\infty \le \rL^\frac12(t_j)\vn{\rk_{\rs}}_2(t_j) + \frac{2\omega\pi}{\rL(t_j)}
	\le C_k
\]
where $C_k$ is a constant depending only on $\rSQ(0)$.
Therefore we find
\begin{equation}
\label{EQuniformrareaest}
	A(t_j) \ge C_A
\end{equation}
where $C_A$ is a constant depending onlty on $\rSQ(0)$.

Combining \eqref{EQuniformrareaest} with the uniform length estimates and
\eqref{EQdecay4}, as well as taking $j$ sufficiently large, we find a
$\varepsilon_3 > 0$ depending only on $\rSQ(0)$ such that
\begin{equation}
\label{EQdecay5}
\begin{split}
		\int_\rgamma \rk^3\,d\rs
		- 2\frac{\rA}{\rL}\int_\rgamma \rk^4 \,d\rs
	&\le
		- 2\frac{\rA}{\rL}\int_\rgamma (\rk-\ork)^4 \,d\rs
		- \varepsilon_3\frac{\rL^3}{32\omega^3\pi^3}
			\int_\rgamma (\rk-\ork)^2 \,d\rs\,,\quad\text{for $t=t_j$}
\,.
\end{split}
\end{equation}
Using \eqref{EQdecay5} with \eqref{EQdecay0}, as well as \eqref{EQdecayest1}, we find a $\delta, \varepsilon_3 > 0$ such that
\begin{equation}
\label{EQdecayest3}
	\vn{\rk_{\rs}}_2(t) < \delta \quad\Longrightarrow\quad \rSI'(t) \le -\varepsilon_3\rSI(t)
	\,.
\end{equation}
and $\vn{\rk_{\rs}}_2(t_j) < \delta$.
This estimate implies that $\rSI(t_j)$ is instantaneously decreasing.
The estimates from the appendix of \cite{Okinawa} yield (as well
as Theorem \ref{TMrallest}, the estimate for $\vn{k}_\infty$, and the uniform
length estimate)
\[
	\vn{\rk_{\rs}}_2^4 \le \vn{\rk_{\rs^2}}_2^2 \vn{\rk-\ork}_2^2
		\le C\SI^\frac12\Big(1 + \vn{\rk_{\rs}}_2^2\Big)^\frac12
		\le C\SI^\frac12\,.
\]
Therefore
\[
	\vn{\rk_{\rs}}_2^2(t) \le C\SI^\frac14(t)\,.
\]
Now the estimate $\SI \le \frac{\rL^3}{32\omega^4\pi^4}\vn{\rk_{\rs}}_2^2$
implies that $\SI(t_j)\rightarrow0$ along the sequence $\{t_j\}$.
In particular there exists a $J$ such that $\SI(t_J) \le \frac{\delta^8}{C^4}$.
Then \eqref{EQdecayest3} implies that $\SI'(t_J) \le 0$ and so the hypothesis
that $\vn{\rk_{\rs}} < \delta$ is preserved.
This means that the estimate
\[
	\rSI'(t) \le -\varepsilon_3\rSI(t)
\]
holds for all $t\ge t_J$.
Therefore
\[
	\rSI(t) \le \rSI(t_J)e^{-\varepsilon_3t}
\]
for $t\ge t_J$; and so
\[
	\rSI(t) \le Ce^{-\varepsilon_3t}
\]
for all $t\ge 0$, as required.
\end{proof}

Interpolation now yields exponential decay of $\vn{\rk-\ork}_2$ as well
as all derivatives of curvature.

\begin{cor}
\label{CYrcurvdecay}
Suppose $\gamma:I\times[0,T)\rightarrow\R^2$ is a compact Chen flow with
initial data $\gamma_0:I\rightarrow\R^2$ being an immersed $\omega$-circle
satisfying \eqref{EQhypothm}.

Let $\rgamma:I\times[0,\infty)\rightarrow\R^2$ be the rescaling of $\gamma$
around the final point $\SO$ (from Theorem \ref{TMfinalpoint}).
Then there exists constants $C_{m,1} < \infty$ depending only on $\omega$, $\rSQ(0)$ and $\vn{\rk_{\rs^l}}_2^2(0)$ for $l\in\{0,\ldots,2m\}$ such that
\begin{equation}
\label{EQdecayofrosc}
	\rL\int_{\rgamma} (\rk - \ork)^2\,d\rs \le C_{0,1}e^{-\frac{\varepsilon_3}{2}t}
\end{equation}
and
\begin{equation}
\label{EQdecayofrall}
	\rL^{2m+1}\int_{\rgamma} \rk_{\rs^m}^2\,d\rs \le C_{m,1}e^{-\frac{\varepsilon_3}{4}t}
	\,,\qquad \text{ for all $t\in[0,\infty)$}\,.
\end{equation}
\end{cor}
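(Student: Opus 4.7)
The plan is to convert the exponential decay of the rescaled isoperimetric defect $\rSI$ from Proposition~\ref{PNdecayrsi} into exponential decay of the curvature oscillation $\rL\vn{\rk-\ork}_2^2$, and then to bootstrap via the uniform higher-order bounds from Theorem~\ref{TMrallest} to obtain exponential decay of all $\vn{\rk_{\rs^m}}_2^2$.

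For \eqref{EQdecayofrosc}, I would reuse the Fourier-series-type interpolation from the appendix of \cite{Okinawa} that was already invoked at the end of the proof of Proposition~\ref{PNdecayrsi}. Combined with the uniform bounds on $\vn{\rk_{\rs}}_2$ and $\vn{\rk_{\rs^2}}_2$ from Theorem~\ref{TMrallest}, the chain of estimates there delivers an inequality of the form
\[
\vn{\rk-\ork}_2^2 \le C\,\rSI^{1/2}.
\]
The uniform upper bound on $\rL$ from Lemma~\ref{LMrlb} then gives
\[
\rL\int_{\rgamma}(\rk-\ork)^2\,d\rs \le C\,\rSI(t)^{1/2} \le C\,e^{-\varepsilon_3 t/2},
\]
which is precisely \eqref{EQdecayofrosc}.

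For \eqref{EQdecayofrall}, the idea is to interpolate between the decaying $L^2$ oscillation and the uniformly bounded high derivatives via a standard Gagliardo--Nirenberg-type inequality for mean-zero functions on a closed curve. For each $m\ge 1$, iterating integration by parts with Cauchy--Schwarz yields
\[
\vn{\rk_{\rs^m}}_2^{2} \le C\,\vn{\rk_{\rs^{2m}}}_2\,\vn{\rk-\ork}_2.
\]
The first factor is uniformly bounded by Theorem~\ref{TMrallest}, while the second decays at rate $\varepsilon_3/4$ by \eqref{EQdecayofrosc} together with the uniform lower bound on $\rL$. Hence $\vn{\rk_{\rs^m}}_2^2 \le C_m\,e^{-\varepsilon_3 t/4}$, and since $\rL^{2m+1}$ is uniformly bounded by Lemma~\ref{LMrlb}, \eqref{EQdecayofrall} follows.

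The main obstacle is really just the first step: one has to carefully trace through the Okinawa interpolation chain used implicitly in Proposition~\ref{PNdecayrsi} and verify that it indeed produces the clean bound $\vn{\rk-\ork}_2^2 \le C\,\rSI^{1/2}$. The higher-derivative step is then a routine bootstrap depending only on the uniform control from Theorem~\ref{TMrallest}; because the interpolation exponent can be taken to be $\theta = 1/2$ for every $m$, the decay rate $\varepsilon_3/4$ is uniform in $m$, with the constant $C_{m,1}$ absorbing the $m$-dependent uniform bound on $\vn{\rk_{\rs^{2m}}}_2$.
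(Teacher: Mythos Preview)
Your proposal is correct and follows essentially the same route as the paper. The paper invokes the specific interpolation inequality \eqref{EQinterpiso} from the appendix of \cite{Okinawa} to obtain \eqref{EQdecayofrosc}, and then uses exactly the integration-by-parts/Cauchy--Schwarz bound $\vn{\rk_{\rs^m}}_2^2 \le \vn{\rk_{\rs^{2m}}}_2\,\vn{\rk-\ork}_2$ together with Theorem~\ref{TMrallest} to conclude \eqref{EQdecayofrall}; your outline matches this in all essential points.
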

\begin{proof}
All of these decay estimates follow from Proposition \ref{PNdecayrsi} and interpolation.
The basic interpolation estimate is
\begin{equation}
\label{EQinterpiso}
	\bigg(\rL\int_{\rgamma} (\rk-\ork)^2\,d\rs\bigg)^2
	\le
		\rL^3\rSI
		\int_{\rgamma} \frac{2\omega\pi}{\rL}\rk^3 + \rk_{\rs}^2\,d\rs
\,.
\end{equation}
This estimate (also used in the proof of Proposition \ref{PNdecayrsi} above)
follows using a Fourier series method that we learned from
Ben Andrews; it is exposed in the appendix of \cite{Okinawa}.
Using our uniform estimates for length and curvature, \eqref{EQinterpiso}
implies the decay estimate \eqref{EQdecayofrosc}.

The decay of higher derivatives now follows simply by using integration by
parts together with the H\"older inequality, and then an application of
\eqref{EQdecayofrosc} (and our uniform estimates):
\begin{align*}
\vn{\rk_{\rs^m}}_2^2
	&= \vn{(\rk-\ork)_{\rs^m}}_2^2
\\
	&\le \vn{\rk_{\rs^(2m)}}_2 \vn{\rk-\ork}_2
\\
	&\le Ce^{-\frac{\varepsilon_3}{4}t}
\,.
\end{align*}
This finishes the proof.
\end{proof}

Combining what we have proven so far, the rescaled flow converges exponentially fast along each subsequence to an (for now, possibly different depending on the subsequence) $\omega$-circle.
We finally identify to which $\omega$-circle the rescaled flow is converging in the following result.

\begin{thm}
Suppose $\gamma:I\times[0,T)\rightarrow\R^2$ is a compact Chen flow with
initial data $\gamma_0:I\rightarrow\R^2$ being an immersed $\omega$-circle
satisfying \eqref{EQhypothm}.

Let $\rgamma:I\times[0,\infty)\rightarrow\R^2$ be the rescaling of $\gamma$
around the final point $\SO$ (from Theorem \ref{TMfinalpoint}).

Then $\rgamma(I,t)$ converges to the unit $\omega$-circle centred at the
origin, with the curvature $\rk$ converging exponentially fast toward its average and
all derivatives of curvature covnerging exponentially fast to zero.
\end{thm}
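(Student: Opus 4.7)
The plan is to combine the $C^\infty$ decay estimates already in hand with a compactness-plus-rigidity argument to identify the rescaled limit. First I will use Corollary \ref{CYrcurvdecay} (exponential decay of $\vn{\rk_{\rs^m}}_2^2$ for every $m \ge 0$), Lemma \ref{LMrlb} (uniform length bounds), and Proposition \ref{PNrgbdd} (uniform bound on $|\rgamma|$) to conclude that $\rgamma(\cdot, t)$ is uniformly bounded in every $C^m(I)$. Differentiating the flow equation $\rgamma_t = \rgamma - \rgamma_{\rs^4}$ propagates these bounds to spacetime, and from any sequence $t_j \to \infty$ Arzel\`a--Ascoli extracts a subsequence along which $\rgamma(\cdot, t_j + \tau) \to \rgamma_\infty(\cdot, \tau)$ in $C^\infty_{\text{loc}}(I\times[0,\infty))$, where $\rgamma_\infty$ solves the rescaled flow.

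Next I identify the radius of the limit as $1$. Expanding
\[
\rSQ = \rL^3 \int_\rgamma \rk_{\rs}^2\,d\rs + \rL^3 \int_\rgamma \bigl(\ork + (\rk - \ork)\bigr)^4\,d\rs
\]
and using $\rL \ork = 2\omega\pi$ together with the exponential decay of $\rk - \ork$ from Corollary \ref{CYrcurvdecay} and the uniform length bounds, one checks that $\rSQ(t) \to 16\omega^4\pi^4$ exponentially fast. Since $\SQ$ is scale-invariant, $\rSQ(r) = \SQ(T - Te^{-4r})$, so combining $(L^4)' = -4\SQ$ (Lemma \ref{LMlengthevo}) with the change of variables $s = T - Te^{-4r}$ yields
\[
\rL^4(t) = 4\, e^{4t} \int_t^\infty \rSQ(r)\,e^{-4r}\,dr.
\]
Exponential decay of $\rSQ - 16\omega^4\pi^4$ then forces $\rL(t) \to 2\omega\pi$ exponentially, and hence $\rk \to 1$ in $C^\infty$. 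In particular, each subsequential limit $\rgamma_\infty(\cdot, \tau)$ is, for every $\tau$, a unit $\omega$-circle whose centre $c(\tau)$ depends smoothly on $\tau$.

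Finally I pin down the centre. Writing $\rgamma_\infty = c(\tau) - \rnu_\infty$ and substituting $\rk \equiv 1$, $\rk_{\rs} = \rk_{\rs\rs} \equiv 0$ into
\[
\partial_\tau \rgamma_\infty = \bigl(\IP{\rgamma_\infty}{\rnu_\infty} - \rk_{\rs\rs} + \rk^3\bigr)\rnu_\infty + \bigl(\IP{\rgamma_\infty}{\rtau_\infty} + 3\rk\rk_{\rs}\bigr)\rtau_\infty
\]
collapses the right-hand side to $\IP{c(\tau)}{\rnu_\infty}\rnu_\infty + \IP{c(\tau)}{\rtau_\infty}\rtau_\infty = c(\tau)$, a vector independent of position on the curve. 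So the limiting family translates rigidly with velocity equal to its own centre, which gives $c'(\tau) = c(\tau)$ and hence $c(\tau) = c(0)\,e^\tau$. The uniform bound from Proposition \ref{PNrgbdd} passes to the limit to yield $|\rgamma_\infty(\cdot, \tau)| \le C$ for all $\tau \ge 0$; since $c(\tau)$ lies within distance $1$ of any point on the limiting circle, this forces $|c(\tau)| \le C + 1$, which is compatible with the exponential growth $c(\tau) = c(0)e^\tau$ only if $c(0) = 0$. Every subsequential limit is therefore the unit $\omega$-circle centred at the origin, so by uniqueness of the limit $\rgamma(I, t)$ converges to that circle; the exponential convergence of $\rk$ and its derivatives is already contained in Corollary \ref{CYrcurvdecay}. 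The main obstacle is the rigidity identity $\partial_\tau \rgamma_\infty = c(\tau)$: it requires the spacetime $C^\infty$ subsequential limit to be a bona fide solution of the rescaled flow on a long time interval so that the exponential growth of the centre trajectory can be read off and contradicted by the uniform $L^\infty$ bound on $|\rgamma|$.
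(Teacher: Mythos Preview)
Your proof is correct and takes a genuinely different, more detailed route than the paper's.

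The paper's proof is two sentences: it asserts that because the flow is becoming round, $\IP{\rgamma_t}{\rnu}\to 0$, hence any subsequential limit circle must satisfy $\rk^3 = -\IP{\rgamma}{\rnu}$ pointwise; for a circle of radius $R$ centred at $p$ this reads $R^{-3}=R-\IP{p}{\rnu}$ for every $\rnu$ on the circle, which forces $p=0$ and $R=1$ simultaneously. The step $\IP{\rgamma_t}{\rnu}\to 0$ is not further justified there and in effect amounts to asserting that the limit is stationary under the rescaled flow.

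Your argument separates the two identifications. For the radius you first show $\rSQ(t)\to 16\omega^4\pi^4$ from Corollary~\ref{CYrcurvdecay}, then derive the exact integral representation $\rL^4(t)=4e^{4t}\int_t^\infty \rSQ(r)e^{-4r}\,dr$ and read off $\rL\to 2\omega\pi$ (and hence $\rk\to 1$) exponentially; this length computation is new and is not in the paper. For the centre you pass to a spacetime subsequential limit $\rgamma_\infty$ solving the rescaled flow, compute explicitly that on a moving unit circle $\partial_\tau\rgamma_\infty=c(\tau)$, deduce $c'=c$, and then use the uniform bound from Proposition~\ref{PNrgbdd} to rule out $c(0)\neq 0$. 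This makes rigorous precisely the dynamical content that the paper's phrase ``$\IP{\rgamma_t}{\rnu}\to 0$'' is gesturing at. The cost of your approach is the extra machinery (the integral formula, spacetime compactness); the payoff is that you obtain exponential convergence of $\rL$ to $2\omega\pi$ and of $\rk$ to the constant $1$, which is slightly stronger than the convergence of $\rk$ to its average stated in the theorem.
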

\begin{proof}
The flow is converging to a circle, so $\IP{\rgamma_t}{\rnu} \rightarrow 0$, which implies
\[
	(\hat k_{\hat s\hat s} - \hat k^3 - \IP{\rgamma}{\rnu}) \rightarrow 0
	\,.
\]
However, since we know that the flow is also becoming rounder exponentially fast, this means that $\rk^3 \rightarrow -\IP{\rgamma}{\rnu} \rightarrow 0$ (also exponentially fast).
If we take a subsequence of times converging to a circle with centre $p\in\R^2$ and of radius $R$, we see that $p$ and $R$ must be the origin and one respectively.
That is, only one circle is possible as the limit of the flow.
This finishes the proof.
\end{proof}

\section{Numerics for Chen's flow of curves}
\label{sect:Numerics}

\subsection{The algorithm}\label{sect:AlgIntro}
Here, we describe a numerical algorithm for Chen's flow~\eqref{ChenFlow} for a family of immersed curves $\gamma : \R/\Z \times I \rightarrow \R^d$, where $I \subset \R$ an interval.
Note that earlier we considered the domain of our compact curves to be $\S$, but for the numerics we consider it to be $\R/\Z$.
Of course, it is trivial to change between these two conventions, but for the numerics $\R/\Z$ is more convenient.

We will first write a semi-discretisation of our flow by discretising in space.
Following this we will discretise the resulting system of ODE\@.
This approach is commonly known as the method of lines.
We note that our approach does not require the curve to be planar.
Our discussion will be formal and heuristic, being completely rigorous would take us well-beyond our current scope.

Our algorithm is parametric, and hence we will approximate $\gamma : \R/\Z \rightarrow \R^d$ by $\gamma_i = \gamma(u_i)$, where $u_i = i/N$, $0 \leq i \leq N - 1$, and $N \in \N_{\geq 3}$.
We set $\Delta u = 1/N$, and note that one may think of $i$ as an element of $\Z/N\Z$.

We will not discretise~\eqref{ChenFlow} directly.
The system we begin with is
\begin{equation}\label{eq:ModChenFlow}
\left\{
\begin{aligned}
\p_\sigma \gamma &= \left(-\frac{a}{1 +  \|\kappa\|_{L^\infty(\R/\Z)}^4}(1 - P(\gamma))\p_s^4 + P(\gamma)\p_u^2\right) \gamma =: S(\gamma) \gamma,
\\
\p_\sigma t &= \frac{a}{1 +  \|\kappa\|_{L^\infty(\R/\Z)}^4},
\end{aligned}
\right.
\end{equation}
where $P(\gamma)X = \langle X, \tau \rangle \tau$ and $a > 0$.
Note that $S(\gamma)$ is linear.

There are two differences between~\eqref{ChenFlow} and~\eqref{eq:ModChenFlow}.
The first is that we have modified the tangential component of the flow.
We only consider the component of $\gamma_{s^4}$ normal to $\gamma$, and we add a tangential velocity which acts to reparaterise the curve to constant speed.
Taken on its own this latter velocity is the harmonic map heat flow of maps from $\R/\Z$ into $\gamma$ at a fixed time.
This coupling between Chen's flow and the harmonic map heat flow to reparameterise the curve is DeTurk's trick.
See~\cite{ElliottFritz2017} for further interesting connections between DeTurk's trick and the numerical analysis of geometric flows.
In the context of our scheme, this tangential velocity is important, because it keeps the $\gamma_i$ close to equidistributed, and hence stops them from coalescing.

In~\cite{BGN2011} Barrett, Garcke, and N\"{u}rnberg present a scheme for curve shortening flow and curve diffusion flow in which the points of the discretisation of $\gamma$ remain equidistributed.
However, a drawback of their scheme is its fully implicit nature which requires the solution to highly non-linear algebraic equations at each time step.

The second difference is that we have introduced an auxiliary time variable $\sigma$.
This is an instance of the Sundman transform, and it has various desirable effects.
For instance, if $\gamma$ is a homothetically shrinking solution of Chen's flow then
\begin{equation*}
\p_t L[\gamma] = -c (L[\gamma])^{-3},
\end{equation*}
where $c > 0$ is a constant depending only on $\gamma$.
This implies that $\gamma$ shrinks to a point in finite-time.
On the other hand, looking at $L[\gamma]$ using the auxiliary time scale, while ignoring $a$ for now, yields
\begin{equation}\label{eq:RescaledLengthODE}
\p_\sigma L[\gamma] = \frac{-c L[\gamma]}{(L[\gamma])^4 + d},
\end{equation}
where $c, d > 0$ are constants depending only on $\gamma$.
This removes the finite-time singularity.

Another, and more practical, reason for the introduction of the auxiliary time was to avoid a numerical artefact when numerically evolving the Lemniscate of Bernoulli which is a homothetically shrinking solution to Chen's flow, see Figure~\ref{fig:ShrinkingLemniscate}.
As the Lemniscate shrinks the natural time-scale of the flow quickens.
A precise consequence of this is that the time it takes the Lemniscate's length to half monotonically decreases to zero.
Consider an algorithm using a fixed-time step $\Delta t$, as $t$ approaches $T$ the increment we are taking in the natural time-scale is approaching infinity, and this large step distorts the Lemniscate so that the ratio between its vertical extent and its horizontal extent approaches zero as we approach the blowup time.
We can see from~\eqref{eq:RescaledLengthODE} that the amount of $\sigma$-time it takes for the length to half is approximately constant as we approach the blowup time.
When using constant $\Delta \sigma$ steps we no longer observe this distortion of the Lemniscate.
Another way one may view this, is that we are, roughly, using an adaptive step-size with $\Delta t \propto 1/(1 + \|\kappa\|_{L^\infty(\R/\Z)}^4)$.

The purpose of $a$ in~\eqref{eq:ModChenFlow} is to adjust the time-scale over which the reparameterisation occurs.
As $a \searrow 0$ we expect that the parameterisation to stay closer to constant-speed, and the $\gamma_i$ to stay more closely equidistributed.
We use
\begin{equation*}
\max_{0 \leq i \leq n - 1} |\gamma_{i + 1} -\gamma_i|/\min_{0 \leq i \leq n - 1} |\gamma_{i + 1} -\gamma_i|.
\end{equation*}
to quantify how far from equidistribution the $\gamma_i$ are.
After each time step we verify that this values is below a fixed threshold.
In this work we have chosen this threshold to be two.
If after any $\sigma$-time step the above quantity becomes higher than this threshold our program prints a warning, after which the simulation can be re-run with a smaller $a$.
We have found that our initial setting of $a = 1$ works fine in all our computations, except for the one illustrated in Figure~\ref{fig:Mayer} for which we chose $a = 0.1$.

Next, we discuss how we discretise~\eqref{eq:ModChenFlow} in space.
By examining~\eqref{eq:ModChenFlow} we see that the quantities we need to approximate are $\gamma_{uu}$, $\tau$, $\gamma_{s^4}$, and $\|\kappa\|_{L^\infty(\R/\Z)}$.
We use standard centred-finite-differences to approximate $\gamma_{uu}$ and $\tau$:
\begin{equation*}
\gamma_{uu}(u_i) \approx \frac{\gamma_{i + 1} + \gamma_{i - 1} - 2\gamma_i}{\Delta u^2}
\text{ and }
\tau(u_i) \approx \frac{\gamma_{i + 1} - \gamma_{i - 1}}{|\gamma_{i + 1} - \gamma_{i - 1}|}.
\end{equation*}
We approximate $\gamma_{s^4}$ by starting with a centred-finite-difference approximation to $\p_s^2$, denoted by $\delta_s^{(2)}$, and applying this twice to approximate $\gamma_{s^4}$ by $(\delta_s^{(2)})^2 \gamma$.
Suppose that $f : \R/\Z \rightarrow \R^k$, and set $f_i = f(u_i)$.
First we have
\begin{equation*}
\p_s f(u_{i + 1/2}) \approx \delta_s^{(1)} f_{i + 1/2} = \frac{f_{i + 1} - f_i}{|\gamma_{i + 1} - \gamma_i|},
\end{equation*}
where $u_{i + 1/2} = (u_i + u_{i + 1})/2$.
We use $\delta_s^{(1)}$ to obtain
\begin{equation*}
\delta_s^{(2)} f_i = \frac{\delta_s^{(1)} f_{i + \frac{1}{2}} - \delta_s^{(1)} f_{i - \frac{1}{2}}}{\frac{1}{2}(|\gamma_{i+1} - \gamma_i| + |\gamma_i - \gamma_{i - 1}|)}.
\end{equation*}
Finally, we approximate $\|\kappa\|_{L^\infty(\R/\Z)}$:
\begin{equation*}
\|\kappa\|_{L^\infty(\R/\Z)} \approx \max_{0 \leq i \leq n - 1} |\delta_s^{(2)} \gamma_i|.
\end{equation*}
By expanding $\gamma$ and $f$ in Taylor series one can verify that these approximation are of second-order in $\Delta u$.

Much of the relevant literature follows~\cite{Dziuk1994} where Dziuk uses finite-element calculations to obtain spatial discretisations.
However, our discretisations are, essentially, the same, and we believe that a finite-difference approach is conceptually simpler.
Substituting these approximations into~\eqref{eq:ModChenFlow} yields a system of ODE for the quantities $t, \gamma_0, \gamma_1, \dots, \gamma_{n - 1}$.

Next, we discretise this system of ODE to obtain our final full-discretisation.
We set
\begin{equation*}
  w(\sigma) = (x_0(\sigma), y_0(\sigma), x_1(\sigma), y_1(\sigma), \dots, x_{n - 1}(\sigma), y_{n - 1}(\sigma))^T.
\end{equation*}
With this we can write our system of ODE as
\begin{equation}\label{eq:ODESys}
\p_\sigma w = A(w)w
\text{ and }
\p_\sigma t = b(w),
\end{equation}
where $A(w)$ is a $2n \times 2n$ matrix.
In the case of Chen's flow $A(w)$ is a band matrix, except for some entries in the bottom-left and top-right corners which occur due to periodicity.

Next, we present our full-discretisation.
In this discussion we forget Chen's flow and simply work with~\eqref{eq:ODESys}, as this highlights the general nature of our approach, which could easily apply to other geometric flows, and quasi-linear PDE in general.
Suppose that we have a solution $(w, t)$ to~\eqref{eq:ODESys} for $\sigma \in [0, T]$.
Our scheme yields a sequence of $\sigma_j = j\Delta\sigma$, $t_j = t(\sigma_j)$ and $w_j = w(\sigma_j)$, where $\Delta \sigma = T/M$, $M \in \N$, and $j \in \{0, \dots, M\}$.
Given $t_j$ and $w_j$ we describe how we compute $t_{j + 1}$ and $w_{j + 1}$:
\begin{enumerate}[1.]
\item
Compute $A_j = A(w_j)$.
\item
Solve for the unique $k_1 \in \R^{2n}$ that satisfies $(I - \Delta\sigma A_j) k_1 = w_j$.
\item
Set $t_{j + 1} = t_j + \Delta\sigma (b(w_j) + b(k_1))/2$.
\item
Compute $\t{A}_{j + 1} = A(k_1)$.
\item
Solve for the unique $k_2 \in \C^{2n}$ that satisfies $(\Delta\sigma (A_j + \t{A}_{j+1})/2 - (i + 1) I) k_2 = i w_j$.
\item
Set $w_{j + 1} = -2 \Re[k_2]$.
\end{enumerate}
This algorithm relies on the $(0, 1)$- and $(0, 2)$-Pad\'e approximations of the exponential function, and the partial-fraction decomposition of the latter approximation as presented in~\cite{KTV1993}.
We use these particular Pad\'e approximations so that our scheme is more stable and immune from spurious oscillations.
Its essence is the solving linear systems which in the specific case of Chen's flow are highly sparse.
One may view this as a semi-implicit scheme and as an extension of the one used in~\cite{Dziuk1994} and~\cite{DKS2002}.
These works studied the curve-shortening flow, the gradient flow of the elastic energy, and the curve diffusion flow.
In fact, if we took $w_{j + 1} = k_1$ we arrive at their scheme.
Whereas the time discretisations presented in~\cite{BGN2011,Dziuk1994,DKS2002} are first-order, our scheme is second-order.
Therefore, in order to achieve a second order full-discretisation we only need $\Delta\sigma \propto \Delta u$, as opposed to $\Delta\sigma \propto \Delta u^2$, which gives our scheme better asymptotic complexity.
For more details regarding the derivation of the scheme refer to Appendix~\ref{app:ODEMethodDetails}.

\subsection{Experimental convergence}
In this section we empirically demonstrate that our method is of second order.
We do this by using a closed-form solution to~\eqref{eq:ModChenFlow}, which we compare to the output of our numerical scheme.

We know a circle shrinks homothetically to a point under the flow.
We make the ansatz
\begin{equation*}
\gamma(u, \sigma) = r(\sigma) (\cos(2\pi g(u, \sigma)), \sin(2\pi g(u, \sigma))),
\end{equation*}
where $g : \R/\Z \times I \rightarrow \R/\Z$ and $I \subset \R$ is an interval.
Substituting this ansatz into~\eqref{eq:ModChenFlow} with $a = 1$ yields
\begin{equation*}
\p_\sigma r = \frac{-r}{1 + r^4}, \, \p_\sigma t = \frac{r^4}{1 + r^4}, \text{ and } \p_\sigma g = g_{uu}.
\end{equation*}
We take $r(0) = 1$ and $t(0) = 0$.
For $g$ we take
\begin{equation*}
g(u, \sigma) = u + \frac{1}{50} e^{-16\pi^2\sigma} \sin(4\pi u).
\end{equation*}
Note that $g_u > 0$, and hence our parameterisation is non-degenerate.
Using our numerical scheme we integrate from $\sigma_0 = 0$ until $\sigma_f = (1 + 2\log 2)/8$.
We chose $\sigma_f$ so that $t(\sigma_f) = 1/8 = T/2$.
We set $\Delta\sigma = 4\sigma_f/(25N) \approx 0.048/n$, and chose $\Delta\sigma$ so that when $N = 16$ the scheme takes $100$ $\sigma$-steps.
We chose this number of steps as a balance between too many steps which makes our algorithm slower, and too few steps which means we need to make $N$ very large in order to observe numerical convergence in the experimental order of convergence.

Our scheme returns a sequence of $(\sigma_j, t_j, \gamma_j)$ values.
We take the initial data of $\sigma_0 = t_0 = 0$ and
\begin{equation*}
\gamma_{0;i} = (\cos(2\pi g(i/N, 0)), \sin(2\pi g(i/N, 0))) \text{ for } 0 \leq i \leq n - 1.
\end{equation*}
The $\gamma_{j; i}$ is our approximation of $\gamma(i/N, t_j)$ which we know must be a circle of radius $(1 - 4t_j)^{1/4}$ and $g(\cdot, \sigma_j)$ gives the parameterisation of this circle.
More precisely, for each iterate $(\sigma_j, t_j, \gamma_j)$ we compute
\begin{equation*}
\max_{0 \leq i \leq n - 1} |\gamma_{j; i} - (1 - 4t_j)^{1/4} (\cos(2\pi g(i/n, \sigma_j)), \sin(2\pi g(i/n, \sigma_j)))|.
\end{equation*}
We take the maximum of these values over the iterates and call the result the $L^\infty$-error.

In Table~\ref{table:Errors} we tabulate these values, and also list the experimental order of convergence (EOC) via the formula $\log_2(E_{N/2}) - \log_2(E_N)$, where $E_N$ is the $L^\infty$-error when using $N$ points to approximate $\gamma$.

\begin{table}[!ht]
\begin{center}
\begin{tabular}{lll}
$n$ & $L^\infty$-error & EOC\\
\toprule
$16$ & $4.6963$e$-3$ & --\\
$32$ & $1.4200$e$-3$ & $1.73$\\
$64$ & $3.9067$e$-4$ & $1.86$\\
$128$ & $1.0230$e$-4$ & $1.93$\\
$256$ & $2.6158$e$-5$ & $1.97$\\
$512$ & $6.6132$e$-6$ & $1.98$\\
$1024$ & $1.6625$e$-6$ & $1.99$\\
$2048$ & $4.1687$e$-7$ & $2.00$\\
\end{tabular}
\caption{Absolute errors and experimental orders of convergence for the test problem.}\label{table:Errors}
\end{center}
\end{table}

\appendix

\section{Discretisation of the ODE system}\label{app:ODEMethodDetails}
Our time-discretisation of ODE of the form~\eqref{eq:ODESys} is second-order, that is, it has a local-truncation error of $\bigO(\Delta\sigma^3)$.
We will show this by analysing a single step of our scheme.
Suppose that~\eqref{eq:ODESys} has a solution $(w, t)$ defined on $\sigma \in [\sigma_0, \sigma_0 + \Delta\sigma]$.
We set $w_0 = w(\sigma_0)$, $w_1 = w(\sigma_0 + \Delta\sigma)$, $t_0 = t(\sigma_0)$, and $t_1 = t(\sigma_0 + \Delta\sigma)$.
Since~\eqref{eq:ODESys} is autonomous, we may assume without loss of generality that $\sigma_0 = 0$.
Before we start, we reiterate that our calculations will be formal in nature.

Start with
\begin{equation}\label{eq:ExactSingleStep}
w_1 = \exp\left(\int_{0}^{\Delta\sigma} A(w(q))\dd{q}\right) w_0.
\end{equation}
Approximate the integral using the left-point method:
\begin{equation*}
w_1 = \exp\left(A(w_0) \Delta\sigma\right) w_0 + \bigO(\Delta\sigma^2).
\end{equation*}
Use the $(0, 1)$-Pad\'e approximation of $\exp$:
\begin{equation*}
w_1 = (I - A(w_0) \Delta\sigma)^{-1} w_0 + \bigO(\Delta\sigma^2).
\end{equation*}
Set $k_1$ to be the unique solution of
\begin{equation*}
(I - A(w_0) \Delta\sigma) k_1 = w_0.
\end{equation*}
We have $w_1 = k_1 + \bigO(\Delta\sigma^2)$.

Before we continue with the higher-order approximation for $w_1$, we will look at computing $t_1$.
We have
\begin{equation*}
t_1 = t_0 + \int_0^{\Delta\sigma} b(w(q)) \dd{q}.
\end{equation*}
We approximate the integral using the trapezoidal method:
\begin{equation*}
t_1 = t_0 + \frac{\Delta\sigma}{2}(b(w_0) + b(w_1)) + \bigO(\Delta\sigma^3).
\end{equation*}
Approximate $w_1$ on the right by $k_1$:
\begin{equation*}
t_1 = t_0 + \frac{\Delta\sigma}{2}(b(w_0) + b(k_1)) + \bigO(\Delta\sigma^3).
\end{equation*}
Now, we move onto using $k_1$ in a higher-order approximation of $w_1$.
Approximate the integral in~\eqref{eq:ExactSingleStep} using the trapezoidal method:
\begin{equation*}
w_1 = \exp\left(\frac{\Delta\sigma}{2} (A(w_0) + A(w_1))\right) w_0 + \bigO(\Delta\sigma^3).
\end{equation*}
Approximate the $w_1$ on the right by $k_1$:
\begin{equation*}
w_1 = \exp\left(\frac{\Delta\sigma}{2} (A(w_0) + A(k_1))\right) w_0 + \bigO(\Delta\sigma^3).
\end{equation*}
Finally, we use the $(0, 2)$-Pad\'e approximation of $\exp$ which is $\exp(X) = (I - X + \frac{1}{2} X^2)^{-1} + \bigO(X^3)$.
However, since matrix multiplication can introduce a large amount of rounding error, we use a partial-fractions decomposition of this approximation as presented in~\cite{KTV1993}:
\begin{equation*}
(I - X + \frac{1}{2} X^2)^{-1} = -2 \Re\left[(X - (i + 1) I)^{-1} i\right],
\end{equation*}
assuming that the entries of $X$ are real.
This leads us to set $k_2$ as the unique solution to
\begin{equation*}
  \left(\frac{\Delta\sigma}{2} (A(w_0) + A(k_1)) - (i + 1) I\right) k_2 = i w_0.
\end{equation*}
Observe that $w_1 = -2 \Re[k_2] + \bigO(\Delta\sigma^3)$.

One could continue this procedure deriving schemes of higher order.
However, we will not pursue this here.

\section{Calculations with the Lemniscate of Bernoulli and a conjecture}

One parametrisation of the Lemniscate of Bernoulli is given by $\beta(\theta) = (x(\theta), y(\theta))$ where
\[
	x(\theta) = \cos \theta / (1+\sin^2 \theta)
\]
and
\[
	y(\theta) = \cos \theta \sin \theta / (1+\sin^2 \theta)
	\,.
\]
Let us start by calculating the arclength derivative.
First,
\begin{align*}
	x_\theta(\theta) &= (-\sin \theta - \sin^3 \theta - 2\sin \theta \cos^2 \theta) / (1+\sin^2 \theta)^2
	\\
	&= (-\sin \theta - \sin^3 \theta - 2\sin \theta + 2\sin^3 \theta) / (1+\sin^2 \theta)^2
	\\
	&= (-3\sin \theta + \sin^3 \theta) / (1+\sin^2 \theta)^2
\end{align*}
and
\begin{align*}
	y_\theta(\theta) &= \cos^2 \theta / (1+\sin^2 \theta) + x_\theta \sin \theta
	\\
	&= (\cos^2 \theta + \cos^2\theta \sin^2\theta - 3\sin^2 \theta + \sin^4\theta ) / (1+\sin^2 \theta)^2
	\\
	&= (\cos^2 \theta + \cos^2\theta \sin^2\theta - 3\sin^2 \theta + \sin^2\theta(1-\cos^2\theta) ) / (1+\sin^2 \theta)^2
	\\
	&= (1 - 3\sin^2 \theta) / (1+\sin^2 \theta)^2
\,.
\end{align*}
The length of $(x_\theta,y_\theta)$, which we call $v$, simplifies to
\begin{align*}
	v^2(\theta) &= x_\theta^2(\theta) + y_\theta^2(\theta)
	\\
	&= (1 - 6\sin^2\theta + 9\sin^4\theta + 9\sin^2\theta - 6\sin^4\theta + \sin^6\theta) / (1+\sin^2 \theta)^4
	\\
	&= (1 + 3\sin^2\theta + 3\sin^4\theta + \sin^6\theta) / (1+\sin^2 \theta)^4
	\\
	&= (1 + \sin^2\theta)^3 / (1+\sin^2 \theta)^4
	\\
	&= 1 / (1+\sin^2 \theta)\,,
\end{align*}
so
\[
	v(\theta) = 1 / (1+\sin^2 \theta)^\frac12\,.
\]
Although we won't use this fact, note that the length of $\beta$ can now be observed to be
\[
	L = \int_0^{2\pi} \frac{1}{\sqrt{1+\sin^2\theta}}\,d\theta
	= 4K(-1)
\]
where $K(-1)$ is the complete elliptic integral of the first kind, with parameter $i$.
This means that if comparing the lifespan of this Lemniscare of Bernoulli with
other curves of winding number zero, they should be scaled to have initial
length equal to $4K(-1)$.

This means that the arclength derivative on $\beta$ is given by
\[
	\partial_s = (1+\sin^2\theta)^\frac12\, \partial_\theta
	\,.
\]
We remark here that this parametrisation has the property that $\beta(\theta) = v^2(\theta)\cos \theta\, (1, \sin \theta)$.
We will use this to make future calculation shorter.

The unit tangent vector is
\begin{align*}
	\partial_s\beta(\theta)
	&= v^{-1}(\theta)\Big(
		2v(\theta)v_\theta(\theta)\cos \theta\, (1, \sin \theta)
		- v^2(\theta)\sin \theta\, (1, \sin \theta)
		+ v^2(\theta)\cos \theta\, (0, \cos \theta)
		\Big)
\\
	&= v^{-1}(\theta)\Big(
		-2v(\theta)
			\frac{\sin\theta \cos\theta}{(1+\sin^2\theta)^\frac32}
			\cos \theta\, (1, \sin \theta)
		+ v^2(\theta)\, (-\sin\theta, \cos^2 \theta - \sin^2 \theta)
		\Big)
\\
	&= v(\theta)\Big(
		 -\sin\theta -2v^2(\theta) \sin\theta \cos^2\theta, \cos^2 \theta - \sin^2 \theta-2v^2(\theta) \sin^2\theta \cos^2\theta
		\Big)
\\
	&= v^3(\theta)\Big(
		 -(1+\sin^2\theta)\sin\theta - 2\sin\theta \cos^2\theta, (1+\sin^2\theta)(\cos^2 \theta - \sin^2 \theta)-2\sin^2\theta \cos^2\theta
		\Big)
\\
	&= v^3(\theta)\Big(
		 -3\sin\theta + \sin^3\theta,
		 1 - \sin^2\theta -\sin^2\theta + \sin^4\theta - \sin^2\theta -\sin^4\theta
		\Big)
\\
	&= v^3(\theta)\Big(
		 -3\sin\theta + \sin^3\theta,
		 1 - 3\sin^2\theta
		\Big)
\,.
\end{align*}
The unit normal vector is the rotation of the unit tangent, with expression
\[
	\nu(\theta) = v^3(\theta) (3\sin^2\theta - 1, \sin^3\theta -3\sin \theta)
	\,.
\]
Now we are interested in establishing the identities that show the lemniscate $\beta$ is a self-similar shrinker under Chen's flow.
Therefore we need to calculate the support function $\IP{\beta}{\nu}$, as well as $k^3$ and $k_{ss}$.
Each of the two latter terms must be proportional to the support function.

We start with:
\begin{align*}
	\IP{\beta(\theta)}{\nu(\theta)}
	&= v^5(\theta){\cos \theta}(3\sin^2\theta - 1 + \sin^4\theta -3\sin^2 \theta)
	\\
	&= v^5(\theta){\cos \theta}(- 1 + \sin^4\theta)
	\\
	&= -v^5(\theta){\cos \theta}(1 - \sin^2\theta)(1 + \sin^2\theta)
	\\
	&= -v^3(\theta){\cos^3 \theta}
\,.
\end{align*}
This is another surprisingly simple expression.

Now we calculate the curvature vector:
\begin{align*}
	\kappa(\theta) &= \tau_s(\theta)
	= v^{-1}(\theta)\, \partial_\theta \bigg[
			v^3(\theta) (-3\sin \theta + \sin^3 \theta, 1 - 3\sin^2 \theta)
		\bigg]
	\\
	&= T(\theta)\tau(\theta)
	+ v^2(\theta) (-3\cos \theta + 3\sin^2 \theta\cos \theta, - 6\sin \theta\cos \theta)
	\\
	&= T(\theta)\tau(\theta)
	+ v^2(\theta) (-3\cos^3 \theta, - 6\sin \theta\cos \theta)
\,.
\end{align*}
We leave the tangential part unsimplified as we will not need it; now, the curvature scalar:
\begin{align*}
	k(\theta) = \IP{\kappa(\theta)}{\nu(\theta)}
	&= v^5(\theta)(-3\cos^3 \theta (3\sin^2-1)  - 6\sin^2 \theta\cos \theta(\sin^2\theta - 3))
	\\
	&= 3v^5(\theta)\cos \theta(\sin^2\theta (3\sin^2-1) - (3\sin^2-1) - 2\sin^2 \theta(\sin^2\theta - 3))
	\\
	&= 3v^5(\theta)\cos \theta(3\sin^4\theta - \sin^2\theta - 3\sin^2\theta + 1 - 2\sin^4\theta + 6\sin^2\theta)
	\\
	&= 3v^5(\theta)\cos \theta(\sin^2\theta + 1 )^2
	\\
	&= 3v(\theta)\cos \theta
\,.
\end{align*}
Therefore
\[
	k^3(\theta)
	= 27v^3(\theta)\cos^3 \theta
	= -27\IP{\beta(\theta)}{\nu(\theta)}
	\,,
\]
and we have shown the first of the identities we need (see \eqref{LB2} in the introduction).

Next, we differentiate the curvature scalar:
\begin{align*}
	k_s
	&= 3v^{-1}(\theta)\partial_\theta\bigg[
		v(\theta)\cos \theta
		\bigg]
	\\
	&= -3\sin \theta
	- 3v^2(\theta)\sin \theta\cos^2 \theta\,,
\end{align*}
and once more,
\begin{align*}
	k_{ss}(\theta)
	&= v^{-1}(\theta)\partial_\theta\bigg[
	 -3\sin \theta
	- 3v^{2}(\theta)\sin\theta\cos^2 \theta
		\bigg]
	\\
	&=
	 -3v^{-1}(\theta)\cos \theta
	+ 6v^{3}(\theta)
		\sin^2 \theta\cos^3 \theta
	- 3v(\theta)
		(\cos^3 \theta - 2\cos \theta\sin^2\theta)
	\\
	&=
	 -3\cos \theta v^{3}(\theta)\Big(
	 	(1 + 2\sin^2\theta + \sin^4\theta)
		+ (- 2\sin^2\theta + 2\sin^4\theta)
		+ (1-\sin^4\theta)
		+ (-2 \sin^2\theta - 2\sin^4\theta)
	 	\Big)
	\\
	&=
	 -3 v^{3}(\theta)\cos \theta
		\Big(
	 	2
		- 2\sin^2\theta
	 	\Big)
	\\
	&=
	 -6v^{3}(\theta)\cos^3 \theta 
	\\
	&= 6\IP{\beta(\theta)}{\nu(\theta)}
\,.
\end{align*}
That's it.
To see that $\beta$ leads to a self-similar Chen flow, set $\gamma:\S\times[0,T)\rightarrow\R^2$ to be the one-parameter family of scalings of $\beta$ with scaling factor $h:[0,T)\rightarrow\R$:
\[
	\gamma(t) = h(t)\beta(\theta)
\,.
\]
Now, supposing that $\gamma$ should solve Chen's flow, we must have
\begin{align*}
	\IP{\gamma_t - (-k_{ss}+k^3)\nu}{\nu}
	&= h'(t)\IP{\beta}{\nu} - h^{-3}(t) (-6\IP{\beta}{\nu} - 27\IP{\beta}{\nu})
	\\
	&= h^{-3}(t)\IP{\beta}{\nu}\Big[
		h^3(t)h'(t) + 33
		\Big]
	\\
	&= \frac{h^{-3}(t)}{4}\IP{\beta}{\nu}\Big[
		(h^4(t))' + 132
		\Big]
	= 0\,.
\end{align*}
In order to ensure this, we may choose
\[
	h(t) = (1-132t)^\frac14
	\,.
\]
Therefore, the map
\[
	(\theta, t) \mapsto 
	h(t) = (1-132t)^\frac14\frac{\cos\theta}{1+\sin^2\theta} \Big(1, \sin\theta \Big)
\]
is a self-similar shrinking solution to Chen's flow, with initial data the Lemniscate of Bernoulli.

Note that this also tells us the lifespan of the solution: $T = \frac1{132}$.
We can therefore make the following conjecture:

\begin{conj}
Suppose $\gamma:\S\times[0,T)\rightarrow\R^2$ is a Chen flow with $\omega=0$ and $L(0) = 4K(-1)$.
Then
\[
	T \le \frac1{132}
\]
and $T = \frac1{132}$ if and only if $\gamma$ is a self-similar shrinking Lemniscate of Bernoulli.
\end{conj}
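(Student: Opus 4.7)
The key reduction is that under Chen's flow $(L^4)'(t) = -4\SQ(t)$ by Lemma~\ref{LMlengthevo}, so $L_0^4 = 4\int_0^T \SQ(r)\,dr$ and the lifespan is controlled entirely by the time-integral of the scale-invariant quantity $\SQ$. A direct computation using the homothetic solution $\gamma(\theta,t) = (1-132t)^{1/4}\beta(\theta)$ gives $\SQ[\beta] = 33\,L[\beta]^4 = 33(4K(-1))^4 =: M_*$, and by scale invariance $M_*$ is the value of $\SQ$ on every rescaled Lemniscate of Bernoulli. My plan is to reduce the conjecture to the following scale-invariant isoperimetric-type inequality: for every smooth closed immersed curve $c\colon\S\to\R^2$ with $\omega(c) = 0$, $\SQ[c] \ge M_*$, with equality if and only if $c$ is a rigid motion and rescaling of the Lemniscate of Bernoulli.

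Granted this inequality, the conjecture follows almost immediately. Applying it at each $t\in[0,T)$ to the closed $\omega=0$ curve $\gamma(\cdot,t)$ gives $\SQ(t)\ge M_*$, so $(L^4)'\le -4M_*$ and, using $L_0 = 4K(-1)$, integration yields $L^4(t)\le L_0^4(1-132t)$; hence $T\le 1/132$. In the equality case $T = 1/132$, the identity $\int_0^T(\SQ-M_*)\,dr = 0$ forces $\SQ\equiv M_*$, and the rigidity part of the inequality identifies each $\gamma(\cdot,t)$ with a rescaled Lemniscate of Bernoulli. Combined with the length normalisation $L_0 = 4K(-1)$ at $t=0$ and uniqueness for Chen's flow, this forces $\gamma$ to coincide with the shrinking Lemniscate constructed in the appendix.

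The inequality says that the Lemniscate of Bernoulli is the unique minimiser (up to symmetries) of $\SQ$ among $\omega=0$ closed curves. I would attack it in three stages. First, set up the constrained minimisation $\min\int_c k_s^2 + k^4\,ds$ subject to $L[c] = 4K(-1)$ and $\omega(c)=0$, and prove existence of a smooth minimiser by the direct method: boundedness of $\int k_s^2$ and $\int k^4$ along a minimising sequence gives weak $W^{3,2}$ compactness after arc-length reparametrisation and translation, and one must carefully rule out degeneration into several components or collapse to a point. Second, derive the Euler-Lagrange equation, a sixth-order ODE in arc length with a Lagrange multiplier term from the length constraint, and use translation, rotation, and reparametrisation invariance to reduce its order. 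Third, classify critical points with $\omega=0$, with the goal of recovering $\beta$ through its shrinker identities $\IP{\beta}{\nu} = \tfrac16 k_{ss}$ and $\IP{\beta}{\nu} = -\tfrac1{27}k^3$ as the only such solution.

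The main obstacle is the classification in the third stage. Unlike the fourth-order Chen-flow shrinker equation $k_{ss} - k^3 = \alpha\IP{\gamma}{\nu}$, which $\beta$ satisfies with $\alpha=33$, the Euler-Lagrange equation for $\SQ$ gains two extra derivatives from the $\int k_s^2\,ds$ term and admits no obvious first integral or reduction of order. A cleaner alternative would be a direct interpolation-style proof of $\SQ\ge M_*$ in the spirit of Dziuk-Kuwert-Sch\"atzle, but any such proof would ultimately have to recover the transcendental constant $K(-1)$ hidden in $M_*$, which strongly suggests that the inequality is irreducibly geometric and tied to the variational characterisation of $\beta$. I therefore expect the classification step to be the decisive difficulty and the result to require substantially new ideas; this is presumably why the statement is offered as a conjecture rather than a theorem.
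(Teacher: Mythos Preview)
The paper does not prove this statement: it is explicitly presented as a \emph{conjecture} at the end of Appendix~B, and no proof is offered. So there is no ``paper's own proof'' to compare against.

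Your reduction is nonetheless correct and is exactly the natural one. From Lemma~\ref{LMlengthevo} one has $(L^4)'=-4\SQ$, and for the homothetic Lemniscate $\gamma(t)=(1-132t)^{1/4}\beta$ the identity $L^4(t)=(1-132t)L[\beta]^4$ gives $\SQ[\beta]=33\,L[\beta]^4$; by scale invariance this is the value of $\SQ$ on every rescaling of $\beta$. The conjecture is therefore equivalent, as you say, to the sharp scale-invariant inequality $\SQ[c]\ge 33(4K(-1))^4$ for every closed immersed curve $c$ with $\omega=0$, with equality exactly for the Lemniscate of Bernoulli up to similarity. Your derivation of the lifespan bound and the equality case from that inequality is clean (the passage from ``each time-slice is a rescaled Lemniscate'' to ``the flow is the shrinking Lemniscate'' does use smoothness in $t$ to fix the centre and orientation, but this is routine).

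Where you correctly stop is the right place to stop: the inequality itself is open, and that is precisely why the paper states the result as a conjecture. Your proposed attack via the direct method and Euler--Lagrange classification is reasonable as a programme, but you should be aware that existence of a minimiser is already delicate in the $\omega=0$ class, since a minimising sequence could in principle degenerate toward a doubly-covered segment or split into components, and the functional $\SQ$ need not be coercive against such degenerations without further argument. In short: your reduction is sound, your assessment of the difficulty is accurate, and nothing you have written contradicts the paper---because the paper offers no proof to contradict.
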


\bibliographystyle{plain}
\bibliography{chen}
\end{document}